\newtheorem{theorem}{Theorem}[section]
\newtheorem{proposition}[theorem]{Proposition}
\newtheorem{lemma}[theorem]{Lemma}
\newtheorem{remark}[theorem]{Remark}
\theoremstyle{definition}
\newcommand{\comment}[1]{}
\numberwithin{equation}{section}
\newcommand{\epf}{ $\Box$\medskip}
\theoremstyle{definition}
\begin{document}
\title{Factorization of Hardy-Orlicz Space on the  Disk and applications to Hankel Operators}
\author{Jean-Marcel Tanoh Dje and Justin Feuto}
\address{Unit\'e de Recherche et d'Expertise Num\'erique, Universit\'e Virtuelle de C\^ote d'Ivoire, Cocody II-Plateaux - 28 BP 536 ABIDJAN 28}
\email{{\tt tanoh.dje@uvci.edu.ci}}
%\author{Justin Feuto}
\address{Laboratoire de Math\'ematiques et Applications, UFR Math\'ematiques$-$Informatique, Universit\'e F\'elix Houphou\"et-Boigny Abidjan-Cocody, 22 B.P 1194 Abidjan 22. C\^ote d'Ivoire}
\email{{\tt justfeuto@yahoo.fr}}

\subjclass{}
\keywords{}

\date{}

\begin{abstract}
In this work, we prove that the product of a function belonging to a Hardy-Orlicz space  $H^{\Phi_{1}}$
 and a function from another Hardy-Orlicz space 
 $H^{\Phi_{2}}$  belongs to a third Hardy-Orlicz space $H^{\Phi_{3}}$. Moreover, we establish the converse: any holomorphic function in the space $H^{\Phi_{3}}$ can be expressed as the product of two functions, one from 
 $H^{\Phi_{1}}$  and the other from $H^{\Phi_{2}}$. Subsequently, we use this factorization result in Hardy-Orlicz spaces to study the continuity of the Hankel operator in these spaces. More specifically, we provide gain and loss estimates for the norms of the Hankel operator in the context of analyzing its continuity in Hardy-Orlicz spaces.
\end{abstract}

\maketitle

%\section{Introduction.}

\section{Introduction and statement of main results.}

Recall that the classical Hardy space $H^p$, $0 < p < \infty$, on the unit disc $\mathbb D := \left\lbrace z \in\mathbb C : \vert z\vert < 1\right\rbrace $
is defined as the space of holomorphic functions $f$ satisfying $$\Vert f\Vert_{H^p}:=\sup_{0\leq r<1}\left( \frac{1}{2\pi}\int^{2\pi}_0\vert f(re^{it})\vert^pdt\right) ^{\frac{1}{p}}<\infty.$$

For a sequence  $(z_{n})_{n\geq 1}$ of elements of 
$\mathbb{D}$  such that 
$    \sum_{n\geq 1}(1-|z_{n}|)< \infty$,  % be a sequence of elements of $\mathbb{D}$  such that $    \sum_{n\geq 1}(1-|z_{n}|)< \infty$. T
the Blaschke product on $\mathbb{D}$  associated to $(z_{n})_{n\geq 1}$ is the function $B$ defined by
\begin{equation}\label{eq:suqiaqqaqaq8n}
B(z)= \prod_{n=1}^{+\infty} \frac{|z_{n}|}{z_{n}}\frac{z_{n}-z}{1-\overline{z_{n}}z}, ~~\forall~z \in \mathbb{D}.
\end{equation}
It is a classical result that any function in the Hardy space $H^{p}$ on the unit disk   can be factored as $f = Bg$ with $\|f\|_{H^{p}}=\|g\|_{H^{p}}$, where $B$ is a Blaschke
product and $g$ is an $H^{p}-$ function with no zero on the unit disk. An immediate consequence of the above result is that for $0<p<\infty$, any function $f$ in the Hardy space $H^{p}$ can be writen as %admits a “strong” factorization
$f = f_{1}f_{2}$ with $f_{1} \in  H^{p_{1}}$, $f_{2} \in  H^{p_{2}}$ and $\|f_{1}\|_{H^{p_{1}}}.\|f_{2}\|_{H^{p_{2}}}=\|f\|_{H^{p}}$, where $p_{1}$ and $p_{2}$ are two positive reals numbers satisfying the condition $1/p = 1/p_{1} + 1/p_{2}$ (see \cite{Pduren2, Jbgarnett, javadmas, wrudin}).

\medskip

Recall that a function $\Phi : [0,\infty) \rightarrow [0,\infty)$ is called an growth function if it is nondecreasing, $\lim_{t \to 0} \Phi(t) =\Phi(0)= 0$, $\Phi(t) > 0$
for $t \in (0,\infty)$ and $\lim_{t \to \infty} \Phi(t) = \infty$.  The growth function  $\Phi$ is said to be of upper type (resp. lower type) if there exists  $p \in (0,\infty)$ and a constant $C>1$ such that for all $t \in  [1,\infty)$ (resp. $t \in  [0,1]$) and $s \in  [0,\infty)$,
\begin{equation}\label{eq:sui8n}
\Phi(st)\leq Ct^{p}\Phi(s).\end{equation}

Let  $\Phi_{1}$ and  $\Phi_{2}$ be two positive functions on  $[0,\infty)$. We say that  $\Phi_{1}$ and  $\Phi_{2}$ are equivalent and we denote  $\Phi_{1} \sim \Phi_{2}$ if there exists a constant $c > 0$ such that
\begin{equation}\label{eq:equivalent}
c^{-1}\Phi_{1}(c^{-1}t) \leq \Phi_{2}(t)\leq c\Phi_{1}(ct), ~~ \forall~ t > 0.\end{equation} 

Let  $\Phi$ be a lower type growth function. Recall that the Orlicz space on the complex unit circle $\mathbb{T}:=\{ z\in \mathbb{C} : |z |=1     \}$  is the space  $L^{\Phi}(\mathbb{T})$ of  measurable functions $g : \mathbb{T} \longrightarrow \mathbb{C}$ which satisfy
$$ \|g\|_{L^{\Phi}}^{lux}:=\inf\left\{\lambda>0 :  \int_{0}^{2\pi}\Phi\left(\frac{|g(e^{i\theta})|}{\lambda}\right)\frac{d\theta}{2\pi} \leq 1  \right\}< \infty. $$
Volberg and Tolokonnikov obtained also a strong factorization of Orlicz spaces in \cite{voltiko}. Their result can be reformulated as follows: the Orlicz space $L^{\Phi_{3}}$ is equal to the product of the Orlicz spaces $L^{\Phi_{1}}$ and $L^{\Phi_{2}}$ if and only if the function $\Phi_{3}^{-1}$ is equivalent to the product of the functions $\Phi_{1}^{-1}$ and $\Phi_{2}^{-1}$, where the $\Phi_{j}$ are lower type growth functions and 
$\Phi_{j}^{-1}$ represents the inverse function of 
$\Phi_{j}$, for $j\in \{1,2,3\}$. 

\medskip
The objective of this work is to propose a generalization of the Riesz factorization, originally obtained in classical Hardy spaces, to Hardy-Orlicz spaces. Furthermore, we aim to establish an analogue of the strong factorization, developed by Volberg and Tolokonnikov in Orlicz spaces, for Hardy-Orlicz spaces.

%The objective of our work is to propose a generalization of the Riesz factorization in Hardy-Orlicz spaces, as well as an extension of the strong factorization obtained by Volberg and Tolokonnikov in these same spaces.

\medskip

Let  $\Phi$ be a lower type growth function. The Hardy-Orlicz space on $\mathbb{D}$,    $H^{\Phi}(\mathbb{D})$ is the space of holomorphic functions $G$ on  $\mathbb{D}$ 
which satisfy
$$ \|G\|_{H^{\Phi}}^{lux}:=\sup_{0\leq r<1 }\|G_{r}\|_{L^{\Phi}}^{lux}< \infty,   $$
where  $G_{r}$  is the function defined by
\begin{equation}\label{eq:equivaaqlent}
G_{r}(e^{i\theta}):=G(re^{i\theta}), ~~ \forall~ \theta \in \mathbb{R}.\end{equation}

Our first main result can be formulated as follow:
  
\begin{theorem}\label{pro:main2a}
Let  $\Phi$ be a growth function of lower type. Let  $0\not\equiv G\in H^{\Phi}(\mathbb{D})$ and   $B$  the Blaschke product associated with the zeros sequence of $G$. The function $G/B$ belongs to   $H^{\Phi}(\mathbb{D })$ and
$\|G\|_{H^{\Phi}}^{lux}\approx\left\|G/B\right\|_{H^{\Phi}}^{lux}$.
\end{theorem}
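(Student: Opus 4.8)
The plan is to follow Riesz's classical argument, replacing the $L^{p}$ integral means by the Orlicz (Luxemburg) means and paying a constant at the one place where convexity is used. Write $g:=G/B$; since $B$ is built from the zeros of $G$ with the correct multiplicities, the singularities cancel and $g$ is holomorphic on $\mathbb{D}$. Each factor in \eqref{eq:suqiaqqaqaq8n} has modulus at most $1$ on $\mathbb{D}$, so $|B(z)|\le 1$ and therefore $|G(z)|\le |g(z)|$ for every $z\in\mathbb{D}$. In particular $|G_{r}|\le |g_{r}|$ pointwise on $\mathbb{T}$ for each $r\in[0,1)$, and the monotonicity of the Luxemburg norm (if $|u|\le|v|$ then $\|u\|^{lux}_{L^\Phi}\le\|v\|^{lux}_{L^\Phi}$) yields $\|G_{r}\|^{lux}_{L^\Phi}\le\|g_{r}\|^{lux}_{L^\Phi}$; taking the supremum over $r$ gives the easy inequality $\|G\|^{lux}_{H^\Phi}\le\|g\|^{lux}_{H^\Phi}$.

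For the reverse inequality I would first treat finite Blaschke products. Let $B_{N}$ denote the product of the first $N$ factors in \eqref{eq:suqiaqqaqaq8n} and set $g_{N}:=G/B_{N}$, which is again holomorphic on $\mathbb{D}$. Because $B_{N}$ is a finite product, $|B_{N}|=1$ on $\mathbb{T}$ and $|B_{N}|$ is bounded below by some $\delta_{N}>0$ on an annulus $\{r_{0}\le|z|<1\}$; hence $|g_{N}|\le \delta_{N}^{-1}|G|$ near the boundary and $g_{N}\in H^\Phi(\mathbb{D})$. The goal of this step is the comparability $\|g_{N}\|^{lux}_{H^\Phi}\approx\|G\|^{lux}_{H^\Phi}$, uniformly in $N$.

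The crux is a boundary-value identity for the Hardy--Orlicz norm, namely $\|F\|^{lux}_{H^\Phi}\approx\|F^{*}\|^{lux}_{L^\Phi(\mathbb{T})}$ for $F\in H^\Phi(\mathbb{D})$, where $F^{*}$ is the radial-limit boundary function. Radial limits exist almost everywhere because the lower type hypothesis \eqref{eq:sui8n} forces $\Phi(t)\gtrsim t^{p}$ for large $t$ (the quotient $\Phi(t)/t^{p}$ is essentially nondecreasing), whence $H^\Phi(\mathbb{D})\subseteq H^{p}(\mathbb{D})$ and the classical radial-limit theorem applies. The comparability of the two norms follows from the monotonicity in $r$ of the Orlicz radial means $r\mapsto\int_{0}^{2\pi}\Phi(|F_{r}|/\lambda)\,\tfrac{d\theta}{2\pi}$; this monotonicity is exactly the subharmonicity of $\Phi(|F|)$, which holds once $\Phi$ is replaced by an equivalent growth function $\widetilde\Phi\sim\Phi$ for which $t\mapsto\widetilde\Phi(e^{t})$ is convex. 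This replacement is where the constants enter and is the reason the statement is phrased with $\approx$ rather than with equality. Granting this identity, and noting that $|B_{N}|=1$ on $\mathbb{T}$ gives $|g_{N}^{*}|=|G^{*}|$ a.e., we obtain $\|g_{N}\|^{lux}_{H^\Phi}\approx\|g_{N}^{*}\|^{lux}_{L^\Phi}=\|G^{*}\|^{lux}_{L^\Phi}\approx\|G\|^{lux}_{H^\Phi}$.

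Finally I would pass to the infinite product by a Fatou argument. As $N\to\infty$, $B_{N}\to B$ and hence $g_{N}\to g$ locally uniformly on $\mathbb{D}$, so for each fixed $r$ we have $(g_{N})_{r}\to g_{r}$ pointwise on $\mathbb{T}$. Fix any $\lambda>C\|G\|^{lux}_{H^\Phi}$, with $C$ the constant from the previous step, so that $\int_{0}^{2\pi}\Phi(|(g_{N})_{r}|/\lambda)\,\tfrac{d\theta}{2\pi}\le 1$ for all $N$ and all $r$. By lower semicontinuity of $\Phi$ and Fatou's lemma, $\int_{0}^{2\pi}\Phi(|g_{r}|/\lambda)\,\tfrac{d\theta}{2\pi}\le\liminf_{N}\int_{0}^{2\pi}\Phi(|(g_{N})_{r}|/\lambda)\,\tfrac{d\theta}{2\pi}\le 1$, so $\|g_{r}\|^{lux}_{L^\Phi}\le\lambda$; taking the supremum over $r$ and then letting $\lambda$ decrease gives $\|g\|^{lux}_{H^\Phi}\lesssim\|G\|^{lux}_{H^\Phi}$. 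Combined with the easy inequality this proves $\|G\|^{lux}_{H^\Phi}\approx\|g\|^{lux}_{H^\Phi}$. The main obstacle is the boundary-value comparability of the third paragraph: without convexity of $\Phi$ the Orlicz means need not be monotone, so constructing the equivalent function $\widetilde\Phi$ and checking that the passage to it does not disturb the lower-type constants is the delicate point; everything else is a faithful transcription of the $H^{p}$ proof.
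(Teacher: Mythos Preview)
Your argument is correct, and the key ingredient---the boundary-value comparability $\|F\|_{H^\Phi}^{lux}\approx\|F^*\|_{L^\Phi}^{lux}$, obtained by passing to an equivalent $\Phi$ for which $t\mapsto\Phi(t^{1/p})$ is convex so that $\Phi(|F|)$ is subharmonic---is exactly what the paper isolates in its Lemma~2.7 and Theorem~2.8. The difference is that the paper proves a slightly stronger form of this lemma: if $F$ lies in \emph{some} $H^s$ and its radial-limit function $f$ lies in $L^\Phi$, then $F\in H^\Phi$ with $\|F\|_{H^\Phi}^{lux}=\|f\|_{L^\Phi}^{lux}$ (the Poisson majorization $\log|F|\le P_r*\log|f|$ together with convexity of $t\mapsto\Phi_p(e^t)$ and Jensen does the work). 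With this in hand the paper bypasses your finite-product/Fatou step entirely: since $G/B\in H^p$ by the classical theory and its boundary values satisfy $|(G/B)^*|=|G^*|$ a.e.\ (because $|B^*|=1$), one gets $G/B\in H^\Phi$ and $\|G/B\|_{H^\Phi}^{lux}=\|G^*\|_{L^\Phi}^{lux}=\|G\|_{H^\Phi}^{lux}$ in one stroke. Your route via truncated Blaschke products $B_N$ and Fatou is the classical Riesz manoeuvre and is perfectly valid, but it is an unnecessary detour once the boundary-value lemma is stated in the form ``$F\in H^s$ plus $F^*\in L^\Phi$ implies $F\in H^\Phi$''; the paper's formulation also yields actual equality of norms (after fixing the equivalent convex $\Phi_p$ once and for all), so the $\approx$ in the statement comes solely from that initial replacement, not from any step of the proof.
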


We say that an analytic function  $G$ on $\mathbb{D}$ is 
\begin{itemize}
\item[(i)] an inner function  if $G\in H^{\infty}(\mathbb{D})$ 
and 
$ \left|\lim_{r\to 1}G(re^{it})\right|=1,$
for almost all $t\in \mathbb{R}$,
\item[(ii)] an outer function if 
\begin{equation}\label{eq:phiaq6qal3de}
 G(z)= \exp\left\{ \frac{1}{2\pi}\int_{-\pi}^{\pi}\frac{e^{it}+z}{e^{it}-z}\log|g(e^{it})|dt  \right\}, ~~\forall~z \in \mathbb{D},\end{equation}
where  $g$ is  a measurable function on $\mathbb{T}$ such that  $\log |g| \in L^{1}(\mathbb{T})$. 
 \end{itemize}
Generally we say that $G$ is the outer function associated with $|g|$.

 \begin{theorem}\label{pro:main0apaaal0}
 Let  $\Phi$ be a growth function of lower type. For  $0\not\equiv G\in H^{\Phi}(\mathbb{D})$, there exists a unique decomposition of the form $G=I_{G}O_{G}$,
 where $I_{G}$ is an inner function and $O_{G}$ is an outer function belonging to $H^{\Phi}(\mathbb{D})$. Moreover, 
 $ \|G\|_{H^{\Phi}}^{lux} \approx\left\|O_{G}\right\|_{H^{\Phi}}^{lux}.$
 \end{theorem}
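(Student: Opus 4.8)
The plan is to reduce the statement to the classical canonical inner--outer factorization in the Smirnov class, and then to transfer the norm comparison through the boundary values, the lower type hypothesis being used essentially once: to push a Jensen inequality through a possibly non-convex growth function. First I would record the embedding $H^{\Phi}(\mathbb{D})\subset H^{p}(\mathbb{D})$, where $p$ is the lower type exponent of $\Phi$. Applying the lower type inequality \eqref{eq:sui8n} with dilation factor $1/t\le 1$ and $s=t\ge 1$ gives $\Phi(t)\gtrsim t^{p}$ for $t\geq 1$, so on the finite measure space $\mathbb{T}$ one has $L^{\Phi}(\mathbb{T})\hookrightarrow L^{p}(\mathbb{T})$, and taking the supremum over $r$ yields the inclusion. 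Consequently every $0\not\equiv G\in H^{\Phi}(\mathbb{D})$ lies in the Smirnov class, admits nontangential boundary values $G^{\ast}$ a.e., and has the classical factorization $G=I_{G}O_{G}$ with $I_{G}$ inner (the product of the Blaschke factor of Theorem \ref{pro:main2a} and the singular inner factor) and $O_{G}$ the outer function associated with $|G^{\ast}|$. Since $|I_{G}^{\ast}|=1$ a.e. on $\mathbb{T}$, we have $|O_{G}^{\ast}|=|G^{\ast}|$ a.e.

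Next I would establish the two inequalities giving $\|G\|_{H^{\Phi}}^{lux}\approx\|O_{G}\|_{H^{\Phi}}^{lux}$. The bound $\|G\|_{H^{\Phi}}^{lux}\le\|O_{G}\|_{H^{\Phi}}^{lux}$ is immediate: as $|I_{G}|\le 1$ on $\mathbb{D}$ and $\Phi$ is nondecreasing, $\int_{0}^{2\pi}\Phi(|G_{r}|/\lambda)\tfrac{d\theta}{2\pi}\le\int_{0}^{2\pi}\Phi(|(O_{G})_{r}|/\lambda)\tfrac{d\theta}{2\pi}$ for every $\lambda>0$ and $r$, whence $\|G_{r}\|_{L^{\Phi}}^{lux}\le\|(O_{G})_{r}\|_{L^{\Phi}}^{lux}$. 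For the reverse inequality I would pass through the boundary. By Fatou's lemma (recall $G_{r}\to G^{\ast}$ a.e.), for every $\lambda>\|G\|_{H^{\Phi}}^{lux}$ one has $\int_{0}^{2\pi}\Phi(|G^{\ast}|/\lambda)\tfrac{d\theta}{2\pi}\le 1$, so $\|G^{\ast}\|_{L^{\Phi}}^{lux}\le\|G\|_{H^{\Phi}}^{lux}$; combined with $|O_{G}^{\ast}|=|G^{\ast}|$ this gives $\|O_{G}^{\ast}\|_{L^{\Phi}}^{lux}\le\|G\|_{H^{\Phi}}^{lux}$. It remains to dominate the $H^{\Phi}$ norm of the outer function by the $L^{\Phi}$ norm of its boundary values.

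This last estimate is the main obstacle, and it is where the lower type is used. Writing $\log|O_{G}(z)|$ as the Poisson integral of $\log|G^{\ast}|$ and applying Jensen's inequality to the convex function $\exp$ yields the unconditional pointwise bound $|(O_{G})_{r}|^{p}\le P_{r}\ast|G^{\ast}|^{p}$, where $P_{r}$ is the Poisson kernel. Since equivalent growth functions define the same space with comparable Luxembourg norms, the lower type $p$ lets me assume that $\Phi_{p}(s):=\Phi(s^{1/p})$ is convex. Taking $\lambda=\|G^{\ast}\|_{L^{\Phi}}^{lux}$, the monotonicity of $\Phi_{p}$, Jensen's inequality for the probability measure $P_{r}(\theta-\cdot)\tfrac{dt}{2\pi}$, and Fubini give
\[
\int_{0}^{2\pi}\Phi\!\left(\frac{|(O_{G})_{r}|}{\lambda}\right)\frac{d\theta}{2\pi}
=\int_{0}^{2\pi}\Phi_{p}\!\left(\frac{|(O_{G})_{r}|^{p}}{\lambda^{p}}\right)\frac{d\theta}{2\pi}
\le\int_{0}^{2\pi}\Phi_{p}\!\left(\frac{|G^{\ast}|^{p}}{\lambda^{p}}\right)\frac{dt}{2\pi}\le 1 ,
\]
so $\|(O_{G})_{r}\|_{L^{\Phi}}^{lux}\le C\|G^{\ast}\|_{L^{\Phi}}^{lux}$ uniformly in $r$, i.e. $\|O_{G}\|_{H^{\Phi}}^{lux}\le C\|G^{\ast}\|_{L^{\Phi}}^{lux}$. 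Chaining the three bounds gives $\|G\|_{H^{\Phi}}^{lux}\le\|O_{G}\|_{H^{\Phi}}^{lux}\le C\|G\|_{H^{\Phi}}^{lux}$; in particular $O_{G}\in H^{\Phi}(\mathbb{D})$.

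Finally, uniqueness is inherited from the classical theory: if $G=I_{1}O_{1}=I_{2}O_{2}$ are two such factorizations, then $|O_{1}^{\ast}|=|G^{\ast}|=|O_{2}^{\ast}|$ a.e., and an outer function is determined by the modulus of its boundary values up to a unimodular constant through \eqref{eq:phiaq6qal3de}, so the inner and outer parts agree under that normalization. I expect the only genuinely delicate point to be the reduction to a convex $\Phi_{p}$; everything else is either classical Hardy space theory or the monotonicity of $\Phi$, with Theorem \ref{pro:main2a} providing a consistency check for the Blaschke subfactor.
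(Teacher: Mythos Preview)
Your argument is correct and follows essentially the same route as the paper: reduce to the classical inner--outer factorization via the embedding $H^{\Phi}\hookrightarrow H^{p}$, then transfer norms through the boundary using the Poisson representation of $\log|O_{G}|$ together with Jensen's inequality and the convexity of $\Phi_{p}$. The only cosmetic differences are that the paper packages your inline Poisson--Jensen computation into Lemma~\ref{pro:mainfaaqaaqqq5} and Propositions~\ref{pro:mainfpmqaaq5}--\ref{pro:main2mpaqq} (applying Jensen once to $\Phi_{p}\circ\exp$ rather than twice to $\exp$ and then $\Phi_{p}$), and thereby obtains the exact equality $\|G\|_{H^{\Phi}}^{lux}=\|O_{G}\|_{H^{\Phi}}^{lux}$ rather than just $\approx$.
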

 
 When  $\Phi(t)=t^{p}$ with  $0< p  < \infty$, Theorem \ref{pro:main2a} coincides with a classical result by Riesz for Hardy spaces. This result, which can be found on Wikipedia or in works accessible to the general public, such as Rudin’s book \cite{wrudin}, represents the first factorization theorem for Hardy spaces. Thanks to this theorem, it becomes possible to work in Hardy spaces by focusing solely on analytic functions that do not vanish on  $\mathbb{D}$. This assumption allows for a strong factorization in the framework of classical Hardy spaces, namely  $H^{p}=H^{p_{1}}.H^{p_{2}}$, where  $1/p = 1/p_{1} + 1/p_{2}$. Indeed, given an analytic function $f$ belonging to $H^{p}$ and not vanishing on $\mathbb{D}$, the function  $f^{p/p_{j}}$ is also analytic on  $\mathbb{D}$ and belongs to 
$H^{p_{j}}$, for $j\in \{1,2\}$, which implies that $f =f^{p/p_{1}}f^{p/p_{2}}$.

However, in the context of Hardy-Orlicz spaces, this condition is not sufficient to guarantee such a factorization. Indeed, it seems difficult to construct an analytic function on $\mathbb{D}$ from an analytic function $f$ on $\mathbb{D}$ and a growth function $\Phi$ when the latter is not equivalent to a power function. Thus, the natural method used by Riesz to obtain strong factorization does not seem applicable in this general case.

It is worth noting that the result stated in Theorem  \ref{pro:main0apaaal0} generalizes the canonical factorization of classical Hardy spaces, as can be found in the references (\cite{Pduren2, Jbgarnett, javadmas, wrudin}). This result also enables strong factorization in the classical setting by factoring the outer function associated with this decomposition. To extend this strong factorization to Hardy-Orlicz spaces, this approach seems to work similarly. In the following result, we reformulate this generalization of the strong factorization obtained by Riesz in the classical case as follows:

 \begin{theorem}\label{pro:main2aqop}
Let $\Phi_{1}$,  $\Phi_{2}$ and $\Phi_{3}$ be growth functions of the lower type such that  $\Phi_{3}^{-1} \sim \Phi_{1}^{-1}.\Phi_{2}^{-1}$, where  $\Phi_{j}^{-1}$ is the inverse function of  $\Phi_{j}$, for  $j\in \{1,2,3\}$. For all $G_{1}\in   H^{\Phi_{1}}(\mathbb{D})$ and  $G_{2}\in H^{\Phi_{2}}(\mathbb{D})$, the product   $G_{1}G_{2} \in H^{\Phi_{3}}(\mathbb{D})$.  Conversely, for $ G\in H^{\Phi_{3}}(\mathbb{D})$, there exist $G_{1}\in H^{\Phi_{1}}(\mathbb{D})$ and $G_{2}\in H^{\Phi_{2}}(\mathbb{D})$ such that
 $G=G_{1}G_{2}$.
 Moreover, $ \|G\|_{H^{\Phi_{3}}}^{lux} \approx \|G_{1}\|_{H^{\Phi_{1}}}^{lux}.\|G_{2}\|_{H^{\Phi_{2}}}^{lux}.$
 \end{theorem}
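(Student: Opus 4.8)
The plan is to treat the two implications separately: the direct inclusion rests on a generalized Hölder inequality on $\mathbb{T}$, while the converse combines the canonical factorization of Theorem~\ref{pro:main0apaaal0} with a Volberg--Tolokonnikov splitting of the boundary modulus. For the \emph{direct inclusion}, I would first record that the hypothesis $\Phi_3^{-1}\sim\Phi_1^{-1}.\Phi_2^{-1}$ is exactly the condition under which a generalized Hölder (O'Neil) inequality holds: there is a constant $C$ with $\|uv\|_{L^{\Phi_3}}^{lux}\leq C\|u\|_{L^{\Phi_1}}^{lux}\|v\|_{L^{\Phi_2}}^{lux}$ for all measurable $u,v$ on $\mathbb{T}$. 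Granting this, for $G_1\in H^{\Phi_1}(\mathbb{D})$, $G_2\in H^{\Phi_2}(\mathbb{D})$ and each $0\leq r<1$, applying it to $u=(G_1)_r$ and $v=(G_2)_r$ (noting $(G_1G_2)_r=(G_1)_r(G_2)_r$) gives $\|(G_1G_2)_r\|_{L^{\Phi_3}}^{lux}\leq C\|G_1\|_{H^{\Phi_1}}^{lux}\|G_2\|_{H^{\Phi_2}}^{lux}$; taking the supremum over $r$ shows $G_1G_2\in H^{\Phi_3}(\mathbb{D})$ with $\|G_1G_2\|_{H^{\Phi_3}}^{lux}\lesssim\|G_1\|_{H^{\Phi_1}}^{lux}\|G_2\|_{H^{\Phi_2}}^{lux}$, which is half of the claimed equivalence.

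For the \emph{converse}, let $0\not\equiv G\in H^{\Phi_3}(\mathbb{D})$ and write $G=I_GO_G$ by Theorem~\ref{pro:main0apaaal0}, with $I_G$ inner, $O_G$ outer, $\|O_G\|_{H^{\Phi_3}}^{lux}\approx\|G\|_{H^{\Phi_3}}^{lux}$; let $|g|=|G^{*}|$ be the associated boundary modulus, so $\log|g|\in L^1(\mathbb{T})$ and $g\in L^{\Phi_3}(\mathbb{T})$. Using $\Phi_3^{-1}\sim\Phi_1^{-1}.\Phi_2^{-1}$ I would split $|g|$ as in \cite{voltiko}, setting $|g_1|:=\Phi_1^{-1}(\Phi_3(|g|))$ and $|g_2|:=|g|/|g_1|$; then $|g_1||g_2|=|g|$ exactly, while the equivalence yields $|g_2|\approx\Phi_2^{-1}(\Phi_3(|g|))$ and hence $\int_{\mathbb{T}}\Phi_j(|g_j|)\,\frac{dt}{2\pi}\lesssim\int_{\mathbb{T}}\Phi_3(|g|)\,\frac{dt}{2\pi}$, so $g_j\in L^{\Phi_j}(\mathbb{T})$ with $\|g_1\|_{L^{\Phi_1}}^{lux}\|g_2\|_{L^{\Phi_2}}^{lux}\lesssim\|g\|_{L^{\Phi_3}}^{lux}$. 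One must also check $\log|g_j|\in L^1(\mathbb{T})$, which follows from $\log|g_1|+\log|g_2|=\log|g|\in L^1$ and the upper-type bounds on $\Phi_j^{-1}$ forced by the lower type of $\Phi_j$, controlling $\bigl|\log|g_j|\bigr|$ by a multiple of $\bigl|\log|g|\bigr|$ where $|g|$ is small. Letting $O_j$ be the outer function associated with $|g_j|$, additivity of $\log|g_j|$ in the exponent of \eqref{eq:phiaq6qal3de} gives $O_G=O_1O_2$, so $G=(I_GO_1)O_2$; I then set $G_1:=I_GO_1$ and $G_2:=O_2$.

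The heart of the matter --- and the step I expect to be the main obstacle --- is to show each $O_j\in H^{\Phi_j}(\mathbb{D})$ with $\|O_j\|_{H^{\Phi_j}}^{lux}\lesssim\|g_j\|_{L^{\Phi_j}}^{lux}$. The natural route is the pointwise bound $|O_j(re^{i\theta})|=\exp\bigl(P_r[\log|g_j|](\theta)\bigr)\leq P_r[|g_j|](\theta)$, obtained from Jensen's inequality for the probability measure $P_r(\theta-t)\,\frac{dt}{2\pi}$ (here $P_r$ is the Poisson kernel), followed by an estimate of the Orlicz norm of the Poisson integral. When $\Phi_j$ is convex, a second use of Jensen and Fubini gives $\int_{\mathbb{T}}\Phi_j\bigl(P_r[|g_j|]/\lambda\bigr)\,\frac{d\theta}{2\pi}\leq\int_{\mathbb{T}}\Phi_j\bigl(|g_j|/\lambda\bigr)\,\frac{dt}{2\pi}$ and the estimate is immediate; the difficulty is that a lower-type growth function need not be convex. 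I would remove this obstruction by replacing $\Phi_j$ with an equivalent growth function for which $x\mapsto\Phi_j(e^{x})$ is convex --- equivalently, for which $\Phi_j(|O_j|)$ is subharmonic, as is automatic for power functions --- the passage to an equivalent function being harmless for the Luxemburg norm.

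Finally, once the outer estimate is in hand, $\|G_1\|_{H^{\Phi_1}}^{lux}=\|I_GO_1\|_{H^{\Phi_1}}^{lux}\leq\|O_1\|_{H^{\Phi_1}}^{lux}$ (since $|I_G|\leq1$ on $\mathbb{D}$) and $\|G_2\|_{H^{\Phi_2}}^{lux}=\|O_2\|_{H^{\Phi_2}}^{lux}$ combine with the norm bounds above to give $\|G_1\|_{H^{\Phi_1}}^{lux}\|G_2\|_{H^{\Phi_2}}^{lux}\lesssim\|g_1\|_{L^{\Phi_1}}^{lux}\|g_2\|_{L^{\Phi_2}}^{lux}\lesssim\|g\|_{L^{\Phi_3}}^{lux}\lesssim\|G\|_{H^{\Phi_3}}^{lux}$. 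Combined with the direct inclusion, this gives $G=G_1G_2$ and the full equivalence $\|G\|_{H^{\Phi_3}}^{lux}\approx\|G_1\|_{H^{\Phi_1}}^{lux}\|G_2\|_{H^{\Phi_2}}^{lux}$.
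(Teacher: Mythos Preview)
Your proposal is correct and follows essentially the same route as the paper: the direct inclusion via a generalized H\"older inequality on $\mathbb{T}$ (packaged as Lemma~\ref{pro:main 5aqp1aqqq2pl}), and the converse via the inner--outer factorization of Theorem~\ref{pro:main0apaaal0}, the Volberg--Tolokonnikov splitting $g_k\approx\Phi_k^{-1}\circ\Phi_3(|g|)$ of the boundary modulus, and the passage to outer functions $O_{g_k}\in H^{\Phi_k}$ by the Poisson--Jensen device you describe (this is exactly Proposition~\ref{pro:mainfpmqaaq5}, resting on Lemma~\ref{pro:mainfaaqaaqqq5}). The one place the paper is more explicit than your sketch is the verification that $\log|g_k|\in L^1$: it bounds $\log^+|g_k|$ directly via Jensen's inequality applied with the convexified $\Phi_{p_k}$, and then obtains $\log^-|g_k|$ from the identity $|g|=g_1g_2$ together with $\log|g|\in L^1$.
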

 
A natural application of such factorizations is the characterization of symbols of bounded Hankel operators. 
 
\medskip
 
The orthogonal projection of $L^{2}(\mathbb{T})$ onto $H^{2}(\mathbb{D})$ is called the  Szeg\"o projection and denoted $\mathcal{P}$. It is given by $$  \mathcal{P}(g)(z)= \frac{1}{2\pi}\int_{-\pi}^{\pi}  \frac{g(e^{i\theta})}{1- z e^{-i\theta}}d\theta, ~~\forall~z \in \mathbb{D} .  $$ 
 For $b \in H^{2}(\mathbb{D})$, the  Hankel operator with symbol $b$ is defined  by 
 $$ h_{b}(g)= \mathcal{P}(b \overline{g}),    $$
with $g$ a bounded holomorphic function $\mathbb{D}$. 

Using simple techniques, Bonami et al proved in \cite{BoGreseh} that the Hankel operator,  $h_{b}$ is bounded from $H^{1}$  to $H^{1}$
 if and only if $b$ belongs to the space $BMOA(\rho)$, where $\rho(t)=\frac{1}{\log(4/t)}$  (we will define the space  $BMOA(\rho)$ more precisely later in this section). Next, in \cite{BoGre}, by applying weak factorization results, Bonami and Grellier showed that the Hankel operator  $h_{b}$ is bounded from  $H^{\Phi}$  to 
 $H^{1}$, where $\Phi$ is a concave growth function. This work was extended by Bonami and Sehba in \cite{BoSehb}, where they demonstrated that  $h_{b}$ is bounded from 
  $H^{\Phi_{1}}$ to  $H^{\Phi_{2}}$, where $\Phi_{1}$ and $\Phi_{2}$ are concave growth functions. More recently, in \cite{sehbaedgc1}, Sehba and Tchoundja proved that  $h_{b}$  is bounded from  $H^{\Phi_{1}}$  to  $H^{\Phi_{2}}$, in the following cases: either $\Phi_{1}$ is concave and $\Phi_{2}$ is convex, or both 
$\Phi_{1}$ and $\Phi_{2}$ are convex growth functions. However, their results are limited to the case where  $H^{\Phi_{2}}$  is a subspace of  $H^{\Phi_{1}}$. The reverse case, where  $H^{\Phi_{1}}$  is a subspace of 
 $H^{\Phi_{2}}$, with both $\Phi_{1}$ and $\Phi_{2}$
  being convex growth functions, was not addressed in their work and appears to be absent from the literature.

This is precisely the case we study in this work. We propose an extension of the results obtained by these authors and also present estimates for the norm of the Hankel operator in each of the cases discussed.

 \medskip
 
Let  $\Phi_{1}$ and $\Phi_{2}$ be two growth functions of lower type and $b$ be a holomorphic function on  $\mathbb{D}$. We say that the  Hankel operator $h_{b}$ with symbol $b$ is bounded from  $H^{\Phi_{1}}(\mathbb{D})$ to $H^{\Phi_{2}}(\mathbb{D})$ if there exists a constant $C>0$  such that
\begin{equation}\label{eq:ineitedaaqpqqehay}
\|h_{b}(g)\|_{H^{\Phi_{2}}}^{lux} \leq C \|g\|_{H^{\Phi_{1}}}^{lux}.
\end{equation}
The norm of $h_{b}$ is given by
$$ \|h_{b}\| =\|h_{b}\|_{H^{\Phi_{1}}\to H^{\Phi_{2}}} := \sup\{\|h_{b}(g)\|_{H^{\Phi_{2}}}^{lux} : \|g\|_{H^{\Phi_{1}}}^{lux} \leq 1   \}. $$
 
We say that the Hankel operator  $h_{b}$ is bounded to loss (resp. gain) from  $H^{\Phi_{1}}(\mathbb{D})$ to $H^{\Phi_{2}}(\mathbb{D})$, if $H^{\Phi_{1}}(\mathbb{D})\subset H^{\Phi_{2}}(\mathbb{D})$ (resp. $H^{\Phi_{2}}(\mathbb{D})\subset H^{\Phi_{1}}(\mathbb{D})$).
  
The first result we obtain regarding the continuity of the Hankel operator in Hardy-Orlicz spaces is as follows:

\begin{theorem}\label{pro:mainfqmaqppaaqqmaqq5}
Let $\Phi_{j}$ be a growth function of lower type $p_{j}$,  for $j \in \{1,2\}$. Suppose that  $\Phi_{2}$ is also of upper type $q_{2}$ such that   $1<p_{2} \leq q_{2} < p_{1}< \infty$.  Then for  $b$  a holomorphic function on  $\mathbb{D}$, the Hankel operator $h_{b}$ is bounded from  $H^{\Phi_{1}}(\mathbb{D})$ to $H^{\Phi_{2}}(\mathbb{D})$ if and only if  $b \in H^{\Phi_{3}}(\mathbb{D})$, where $\Phi_{3}$ is a lower type growth function such that $ \Phi_{3}^{-1}(t) \sim \frac{\Phi_{2}^{-1}(t)}{\Phi_{1}^{-1}(t)}$.  Moreover,
\begin{equation}\label{eq:ineitmqqqaqqehay}
\|h_{b}\|\approx\|b\|_{H^{\Phi_{3}}}^{lux}.
\end{equation}
\end{theorem}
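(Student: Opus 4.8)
\emph{Plan.} The plan is to reduce the theorem to two ingredients already at our disposal: the strong factorization of Theorem~\ref{pro:main2aqop} and the boundedness of the Szeg\"o projection $\mathcal{P}$ on the Orlicz spaces attached to $\Phi_2$ and to $\Phi_3$. First I would record that the hypothesis $\Phi_3^{-1}\sim\Phi_2^{-1}/\Phi_1^{-1}$ is the same as $\Phi_2^{-1}\sim\Phi_1^{-1}\Phi_3^{-1}$, so Theorem~\ref{pro:main2aqop} applies to the triple $(\Phi_1,\Phi_3,\Phi_2)$. Next I would verify that the numerical constraints force $\Phi_3$ to have nontrivial indices. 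Using that $\Phi_2^{-1}$ is of upper type $1/p_2$ and lower type $1/q_2$, while $\Phi_1^{-1}$ is increasing and of upper type $1/p_1$, for $s\ge 1$ one gets
\[ c\,s^{1/q_2-1/p_1}\le \frac{\Phi_3^{-1}(st)}{\Phi_3^{-1}(t)}\le C\,s^{1/p_2}. \]
Since $1<p_2$ and $q_2<p_1$, this shows $\Phi_3$ is of lower type $p_2>1$ and upper type $(1/q_2-1/p_1)^{-1}<\infty$. Together with $1<p_2\le q_2<\infty$ for $\Phi_2$, these index bounds are exactly what make $\mathcal{P}$ bounded on $L^{\Phi_2}(\mathbb{T})$, on $L^{\Phi_3}(\mathbb{T})$, and on the space attached to the complementary function $\Psi_3$ of $\Phi_3$, and what validate the dualities used below. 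I will also use that $\|F\|_{L^{\Phi}(\mathbb{T})}\approx\|F\|_{H^{\Phi}}^{lux}$ for holomorphic $F$.

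\emph{Sufficiency.} To show that $b\in H^{\Phi_3}$ implies $h_b$ bounded with $\|h_b\|\lesssim\|b\|_{H^{\Phi_3}}^{lux}$, I would argue directly. For $g\in H^{\Phi_1}(\mathbb{D})$ the generalized H\"older inequality for Orlicz spaces, legitimate precisely because $\Phi_2^{-1}\sim\Phi_1^{-1}\Phi_3^{-1}$, gives $\|b\bar g\|_{L^{\Phi_2}}\lesssim\|b\|_{L^{\Phi_3}}\|g\|_{L^{\Phi_1}}$ (here $|b\bar g|=|b|\,|g|$). As $h_b(g)=\mathcal{P}(b\bar g)$ and $\mathcal{P}$ is bounded on $L^{\Phi_2}(\mathbb{T})$, this yields $\|h_b(g)\|_{H^{\Phi_2}}^{lux}\lesssim\|b\bar g\|_{L^{\Phi_2}}\lesssim\|b\|_{H^{\Phi_3}}^{lux}\|g\|_{H^{\Phi_1}}^{lux}$, the desired bound.

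\emph{Necessity.} To show $h_b$ bounded implies $b\in H^{\Phi_3}$ with $\|b\|_{H^{\Phi_3}}^{lux}\lesssim\|h_b\|$, I would pass to the dual side. Let $\Psi_2,\Psi_3$ be the complementary functions of $\Phi_2,\Phi_3$. The first step is the Hankel pairing identity, valid for $g,f$ in a dense class of bounded analytic functions,
\[ \langle h_b(g),f\rangle=\langle \mathcal{P}(b\bar g),f\rangle=\langle b\bar g,f\rangle=\langle b,gf\rangle, \]
where $\langle\cdot,\cdot\rangle$ is the boundary $L^2(\mathbb{T})$ pairing. Combined with $|\langle h_b(g),f\rangle|\le\|h_b(g)\|_{H^{\Phi_2}}^{lux}\|f\|_{(H^{\Phi_2})^*}\lesssim\|h_b\|\,\|g\|_{H^{\Phi_1}}^{lux}\|f\|_{H^{\Psi_2}}^{lux}$ this gives $|\langle b,gf\rangle|\lesssim\|h_b\|\,\|g\|_{H^{\Phi_1}}^{lux}\|f\|_{H^{\Psi_2}}^{lux}$. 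The key arithmetic step is to identify the products $gf$: since $\Psi_2^{-1}(t)\sim t/\Phi_2^{-1}(t)$ and $\Phi_3^{-1}\sim\Phi_2^{-1}/\Phi_1^{-1}$, one computes $\Phi_1^{-1}\Psi_2^{-1}\sim t\,\Phi_1^{-1}/\Phi_2^{-1}=t/\Phi_3^{-1}\sim\Psi_3^{-1}$, so Theorem~\ref{pro:main2aqop} gives $H^{\Phi_1}\cdot H^{\Psi_2}=H^{\Psi_3}$ with comparable norms. Hence every $\phi\in H^{\Psi_3}$ factors as $\phi=gf$ with $\|g\|_{H^{\Phi_1}}^{lux}\|f\|_{H^{\Psi_2}}^{lux}\lesssim\|\phi\|_{H^{\Psi_3}}^{lux}$, and the preceding estimate becomes $|\langle b,\phi\rangle|\lesssim\|h_b\|\,\|\phi\|_{H^{\Psi_3}}^{lux}$. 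Thus $b$ induces a bounded functional on $H^{\Psi_3}(\mathbb{D})$, and the duality $(H^{\Psi_3})^*\cong H^{\Phi_3}$ gives $\|b\|_{H^{\Phi_3}}^{lux}\lesssim\|h_b\|$. Combining the two inequalities yields $\|h_b\|\approx\|b\|_{H^{\Phi_3}}^{lux}$.

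\emph{Main obstacle.} The step I expect to be the crux is the dual realization in the necessity part: one must establish the analytic Hardy--Orlicz dualities $(H^{\Phi_2})^*\cong H^{\Psi_2}$ and $(H^{\Psi_3})^*\cong H^{\Phi_3}$ with norm equivalence, and in particular justify replacing the $L^{\Phi_3}$--$L^{\Psi_3}$ Orlicz duality on $\mathbb{T}$ by a pairing against holomorphic test functions---this is exactly where the boundedness of $\mathcal{P}$ on $L^{\Psi_3}$, hence the index computation above, is indispensable. A secondary point requiring care is extending the Hankel pairing identity from bounded analytic functions to all admissible $g,f$ by density and checking both inclusions in $H^{\Phi_1}\cdot H^{\Psi_2}=H^{\Psi_3}$ with the stated norm control, which uses both directions of Theorem~\ref{pro:main2aqop}. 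The remaining manipulations are a routine combination of the generalized H\"older inequality and the boundedness of the Szeg\"o projection.
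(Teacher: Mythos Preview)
Your proposal is correct and follows essentially the same strategy as the paper. In the necessity direction your argument is identical: the paper introduces an auxiliary function $\Phi_4$ via $\Phi_4^{-1}=\Phi_1^{-1}\Psi_2^{-1}$ and checks that $\Phi_4^{-1}\Phi_3^{-1}\sim t$, i.e.\ $\Phi_4$ is the complementary of $\Phi_3$; this is exactly your $\Psi_3$, and the rest (factor $F\in H^{\Phi_4}=H^{\Psi_3}$ as $F_1F_2$ via Theorem~\ref{pro:main2aqop}, pair with $b$, use the $(H^{\Phi_2},H^{\Psi_2})$ duality) is the same. Your index computation placing $\Phi_3$ in $\Delta_2\cap\nabla_2$ matches Lemma~\ref{pro:mainfqmaqpq5}(i).

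The only genuine variation is in the sufficiency direction. You argue directly: $\|b\bar g\|_{L^{\Phi_2}}\lesssim\|b\|_{L^{\Phi_3}}\|g\|_{L^{\Phi_1}}$ by the Orlicz H\"older inequality, then apply the boundedness of $\mathcal{P}$ on $L^{\Phi_2}(\mathbb{T})$ (Proposition~\ref{pro:main11pqapaqlas6}). The paper instead tests $h_b(F)$ against $G\in H^{\Psi_2}$, rewrites $\langle h_b(F),G\rangle=\langle b,FG\rangle$, and uses the product bound $\|FG\|_{H^{\Phi_4}}\lesssim\|F\|_{H^{\Phi_1}}\|G\|_{H^{\Psi_2}}$ together with the $(H^{\Phi_3},H^{\Phi_4})$ duality. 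Your route is slightly more economical since it avoids invoking a second duality, at the cost of using the Szeg\"o projection bound explicitly; the paper's route keeps both halves symmetric through the same pairing identity. Either way the content is the same.
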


In Theorem \ref{pro:mainfqmaqppaaqqmaqq5}, the result we obtain consists of a loss estimate of the norm of the Hankel operator when studying its continuity of the Hardy-Orlicz space 
$H^{\Phi_{1}}$ to the Hardy-Orlicz space $H^{\Phi_{2}}$. This result has both advantages and limitations. The advantage is that the starting function $\Phi_{1}$ is of lower type only, which allows our result to include functions of exponential type (e.g. $t\mapsto \exp(t )-1$, which are never not of upper type). Thus, we can study the continuity of the Hankel operator from $H^{\Phi_{1}}$ to $H^{\Phi_{2}}$, even when the function $\Phi_{1}$ is of exponential type. However, a limitation of this result is that it does not cover the case where the functions $\Phi_{1}$ and 
$\Phi_{2}$ are identical or even equivalent.

\medskip
 
In the following result, we present another result on the continuity of the Hankel operator, which accounts for the case where the functions $\Phi_{1}$ and $\Phi_{2}$ are identical or equivalent.

Let  $\varrho$ be a positive function on $[0,\infty)$ such that $\varrho (t)>0$, for  $t>0$. We define the space $BMO(\varrho)$ as the space of    $g \in L^{2}(\mathbb{T})$  which satisfy
 $$  \|g\|_{BMO(\varrho)}:= \sup_{I \subset \mathbb{T}} \frac{1}{\varrho(|I|)}\left(\frac{1}{|I|}\int_{I}|g(z)-m_{I}(g)|^{2}dz \right)^{1/2} < \infty,   $$
 where the supremum is taking on all intervals $I \subset \mathbb{T}$,  and $m_{I}(g):=\frac{1}{|I|}\int_{I}g(s)ds$. Here, for any measurable set $E\subset \mathbb{T}$, $|E|$ denotes the Lebesgue measure of $E$. The space $BMOA(\varrho)$ is the space of holomorphic functions   $G \in H^{2}(\mathbb{D})$  which satisfy
 $$ \|G\|_{BMOA(\varrho)}:=\sup_{0\leq r<1 }\|G_{r}\|_{BMO(\varrho)}< \infty,   $$
 where  $G_{r}$  is the function defined in (\ref{eq:equivaaqlent}). We will simply denote $BMO(\varrho)$ and $BMOA(\varrho)$ by $BMO(\mathbb{T})$ and $BMOA(\mathbb{D})$, respectively, when $\varrho \equiv  1$.

\begin{theorem}\label{pro:madgaaqqmaqq5}
Let $\Phi_{j}$ be a growth function of both lower type $p_{j}$ and upper type $q_{j}$ and let $\varrho_{j}(t):=\frac{1}{t\Phi_{j}^{-1}(1/t)}$, for $j \in \{1,2\}$. Suppose that  $0<p_{1} \leq q_{1} \leq p_{2}$ and  $1<p_{2} \leq q_{2} < \infty$. Then for  $b$  a holomorphic function on  $\mathbb{D}$, the Hankel operator $h_{b}$ is bounded from  $H^{\Phi_{1}}(\mathbb{D})$ to $H^{\Phi_{2}}(\mathbb{D})$ if and only if  $b \in BMOA(\varrho)$, where  
$\varrho:= \frac{\varrho_{1}}{\varrho_{2}}.$ Moreover,
\begin{equation}\label{eq:ineitmqaaaqqaaqqqehqay}
\|h_{b}\|\approx\|b\|_{BMOA(\varrho)}.
\end{equation}
\end{theorem}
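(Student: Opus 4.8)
The plan is to combine the factorization of Theorem~\ref{pro:main2aqop} with two duality identities for Hardy--Orlicz spaces, reducing the boundedness of $h_b$ to a statement about the membership of $b$ in the dual of an auxiliary Hardy--Orlicz space. First I would record the bilinear identity underlying every Hankel estimate: since $\mathcal P$ is self-adjoint and fixes $H^2$, for holomorphic $g,f$ one has
$$\langle h_b(g),f\rangle=\langle \mathcal P(b\overline g),f\rangle=\langle b\overline g,f\rangle=\frac{1}{2\pi}\int_{-\pi}^{\pi}b\,\overline{gf}\,d\theta=\langle b,gf\rangle,$$
so that testing $h_b(g)$ against $f$ amounts to testing $b$ against the \emph{product} $gf$.

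Next, because $1<p_2\le q_2<\infty$, the space $L^{\Phi_2}$ is reflexive and the Szeg\"o projection is bounded on it, so that $(H^{\Phi_2})^{*}\cong H^{\Phi_2^{*}}$ under the $L^2$-pairing, where $\Phi_2^{*}$ denotes the complementary growth function of $\Phi_2$, characterised by $(\Phi_2^{*})^{-1}(t)\,\Phi_2^{-1}(t)\approx t$. Consequently, for $g\in H^{\Phi_1}$,
$$\|h_b(g)\|_{H^{\Phi_2}}^{lux}\approx\sup\{|\langle b,gf\rangle|:\ f\in H^{\Phi_2^{*}}(\mathbb D),\ \|f\|_{H^{\Phi_2^{*}}}^{lux}\le 1\},$$
whence, taking the supremum over $\|g\|_{H^{\Phi_1}}^{lux}\le 1$,
$$\|h_b\|\approx\sup\{|\langle b,gf\rangle|:\ \|g\|_{H^{\Phi_1}}^{lux}\le1,\ \|f\|_{H^{\Phi_2^{*}}}^{lux}\le1\}.$$
Now introduce the lower-type growth function $\Phi_0$ defined by $\Phi_0^{-1}\sim\Phi_1^{-1}\cdot(\Phi_2^{*})^{-1}$. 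By Theorem~\ref{pro:main2aqop} the products $gf$ appearing above range, up to a fixed multiplicative constant on the norms, exactly over the unit ball of $H^{\Phi_0}(\mathbb D)$: the direct part of the theorem gives $\|gf\|_{H^{\Phi_0}}^{lux}\lesssim\|g\|_{H^{\Phi_1}}^{lux}\,\|f\|_{H^{\Phi_2^{*}}}^{lux}$, while its converse part shows every $h\in H^{\Phi_0}$ factors as $h=gf$ with comparable norms. Therefore
$$\|h_b\|\approx\sup\{|\langle b,h\rangle|:\ h\in H^{\Phi_0}(\mathbb D),\ \|h\|_{H^{\Phi_0}}^{lux}\le1\}=\|b\|_{(H^{\Phi_0})^{*}}.$$

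It then remains to identify this dual as $BMOA(\varrho)$. Using $(\Phi_2^{*})^{-1}(t)\approx t/\Phi_2^{-1}(t)$ we get $\Phi_0^{-1}(t)\approx t\,\Phi_1^{-1}(t)/\Phi_2^{-1}(t)$, and a direct computation of $\varrho_{\Phi_0}(t):=1/(t\,\Phi_0^{-1}(1/t))$ gives $\varrho_{\Phi_0}(t)\approx\Phi_2^{-1}(1/t)/\Phi_1^{-1}(1/t)=\varrho_1(t)/\varrho_2(t)=\varrho(t)$. The hypotheses are exactly what place $\Phi_0$ in the range of the Hardy--Orlicz/$BMOA$ duality: the type-arithmetic for inverse functions shows that $q_1\le p_2$ forces $\Phi_0^{-1}$ to have lower type $\ge 1$, i.e.\ $\Phi_0$ has upper type at most $1$, so that $H^{\Phi_0}$ is a Hardy space at or below $H^1$ whose dual is the Campanato-type space $BMOA(\varrho_{\Phi_0})=BMOA(\varrho)$; the bounds $0<p_1$ and $q_2<\infty$ keep $\Phi_0$ of positive, finite type and hence a genuine growth function. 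Invoking this duality, $\|b\|_{(H^{\Phi_0})^{*}}\approx\|b\|_{BMOA(\varrho)}$, and chaining the equivalences yields both the characterisation that $h_b$ is bounded if and only if $b\in BMOA(\varrho)$ and the estimate \eqref{eq:ineitmqaaaqqaaqqqehqay}.

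The main obstacle I anticipate is the duality $(H^{\Phi_0})^{*}\cong BMOA(\varrho_{\Phi_0})$: unlike the reflexive duality used for $H^{\Phi_2}$, this is a genuine Fefferman-type $H^1$--$BMO$ phenomenon adapted to the Orlicz setting, and it must be set up precisely for growth functions of upper type at most $1$ and for the exact weight $\varrho_{\Phi_0}$. A secondary technical point is ensuring that $\Phi_0$ and $\Phi_2^{*}$ are lower-type growth functions to which Theorem~\ref{pro:main2aqop} legitimately applies, and that the two-sided factorization is quantitative enough that the products $gf$ truly sweep out the whole unit ball of $H^{\Phi_0}$, so that no constants are lost in either inequality of the final $\approx$.
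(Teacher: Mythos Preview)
Your proposal is correct and follows essentially the same route as the paper: introduce the auxiliary growth function $\Phi_0$ with $\Phi_0^{-1}\sim\Phi_1^{-1}\Psi_2^{-1}$ (the paper calls it $\Phi_3$), use the reflexive duality $(H^{\Phi_2})^*\cong H^{\Psi_2}$ together with the factorization Theorem~\ref{pro:main2aqop} to reduce $\|h_b\|$ to the dual norm of $b$ on $H^{\Phi_0}$, and then identify that dual as $BMOA(\varrho)$ via the Viviani-type duality after checking that $\Phi_0$ has upper type at most $1$ and that $\varrho_{\Phi_0}\sim\varrho_1/\varrho_2$. The paper presents the two implications separately rather than packaging everything as a single dual-norm identity, but the ingredients and their assembly are the same.
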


In Theorem \ref{pro:madgaaqqmaqq5}, we obtain a gain estimate for the norm of the Hankel operator in the context of its continuity between the Hardy-Orlicz spaces 
 $H^{\Phi_{1}}$ and  $H^{\Phi_{2}}$. The advantage of the result obtained in this theorem is that it does not require imposing convexity or concavity on the function 
$\Phi_{1}$. It is sufficient for $\Phi_{1}$ to be both of lower type $p_{1}$ and upper type $q_{1}$. However, it is not necessary for $p_{1}$ and $q_{1}$ to both be less than $1$ or greater than $1$.

\medskip

We use the  abbreviation $\mathrm{ A}\lesssim \mathrm{ B}$ for inequalities $\mathrm{ A}\leq C\mathrm{ B}$, where $C$ is a positive constant independent of the main parameters. If $\mathrm{ A}\lesssim \mathrm{ B}$ and $\mathrm{ B}\lesssim \mathrm{ A}$, then we write $\mathrm{ A} \approx \mathrm{ B}$.
In all what follows, the letter $C$ will be used for non-negative constants independent of the relevant variables that may change from one occurrence to another. Constants with subscript, such as $C_{s}$, may also change in different occurrences, but depend on the parameters mentioned in it.

\section{Some definitions and useful properties.}

\subsection{Growth functions.} 

Let  $\Phi$ be a growth function. We say that  $\Phi$ satisfies the $\Delta_{2}-$condition (or $\Phi \in \Delta_{2}$) if there exists a constant $K > 1$ such that
\begin{equation}\label{eq:delta2}
\Phi(2t) \leq K \Phi(t),~ \forall~ t >  0.\end{equation}
We say also that  $\Phi$ satisfies the $\nabla_{2}-$condition (or $\Phi \in \nabla_{2}$) if there exists $C > 1$ such that 
\begin{equation}\label{eq:delmta2}
\Phi(t) \leq \frac{1}{2C} \Phi(Ct),~ \forall~ t >  0.\end{equation}

We can find most of the following results in (\cite{raoren, rao68ren}):

Let  $\Phi$ be a convex growth function such that  $\lim_{t \to 0}\frac{\Phi(t)}{t}=0$ and $\lim_{t \to \infty}\frac{\Phi(t)}{t}=+\infty$. There exists  $\varphi$ a positive, left continuous and non-decreasing function on $[0, \infty)$ such that $\varphi(0)=0$ and $\lim_{t \to \infty}\varphi(t)=\infty$ and, 
$$   \Phi(t)=\int_{0}^{t}\varphi(s)ds, ~~\forall~t\geq 0.
  $$
The complementary function of $\Phi$ is the function $\Psi$ defined by
$$ \Psi(s)=\sup_{t\geq 0}\{st-\Phi(t) \}, ~ \forall~  s \geq 0.       $$
Note that function  $\Psi$ has the same properties as function $\Phi$ and, the complementary of $\Psi$ is $\Phi$. Put
$$  a_\Phi:=\liminf_{t\to \infty}\frac{t\varphi(t)}{\Phi(t)}
   \hspace*{1cm}\textrm{and} \hspace*{1cm} b_\Phi:=\limsup_{t\to \infty}\frac{t\varphi(t)}{\Phi(t)}, $$
 and similarly $a_\Psi$, $b_\Psi$ be defined. We have the following properties:
\begin{itemize}
\item[(i)] For all $t>0$,  $t<\Phi^{-1}(t)\Psi^{-1}(t) \leq 2t$.
\item[(ii)]  $b_\Phi< \infty$ if and only if $a_\Psi>1$. Moreover, $\frac{1}{a_\Psi}  + \frac{1}{b_\Phi} =1.$
\item[(iii)] $\Phi \in \Delta_{2} \cap \nabla_{2}$  if and only if 
 $1< a_\Phi \leq b_\Phi< \infty$.
\end{itemize}

\begin{proposition}\label{pro:mainfaaqaaqldq5}
Let  $\Phi$ be a convex growth function such that $\lim_{t \to 0}\frac{\Phi(t)}{t}=0$ and $\lim_{t \to \infty}\frac{\Phi(t)}{t}=+\infty$. If $0 < a_{\Phi} \leq b_{\Phi}< \infty$ then  $\Phi$ is respectively lower type  $a_{\Phi}$ and upper type  $b_{\Phi}$.
\end{proposition}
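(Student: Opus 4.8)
The plan is to reduce both type inequalities to a single identity for the logarithmic derivative of $\Phi$. Since $\Phi$ is convex with $\Phi(t)=\int_{0}^{t}\varphi(s)\,ds$ and $\varphi$ positive, left continuous and nondecreasing, $\log\Phi$ is locally absolutely continuous on $(0,\infty)$, and writing $\Theta(t):=\frac{t\varphi(t)}{\Phi(t)}$ one has $\frac{d}{du}\log\Phi(e^{u})=\Theta(e^{u})$. Hence for $0<s\le r$,
\[
\log\frac{\Phi(r)}{\Phi(s)}=\int_{s}^{r}\frac{\varphi(u)}{\Phi(u)}\,du=\int_{s}^{r}\Theta(u)\,\frac{du}{u}.
\]
The monotonicity of $\varphi$ gives the automatic lower bound $\Theta(t)\ge 1$ (so in fact $a_{\Phi}\ge 1$), and by definition $a_{\Phi}=\liminf_{t\to\infty}\Theta(t)$, $b_{\Phi}=\limsup_{t\to\infty}\Theta(t)$. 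This identity is the only analytic input I would need.

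First I would prove both estimates for large base points. Fix $\epsilon>0$ and choose $T$ so that $a_{\Phi}-\epsilon\le\Theta(u)\le b_{\Phi}+\epsilon$ for all $u\ge T$. For $s\ge T$ and a scale $\lambda\ge 1$, integrating the identity over $[s,\lambda s]$ gives $\log\frac{\Phi(\lambda s)}{\Phi(s)}\le(b_{\Phi}+\epsilon)\log\lambda$, i.e.\ $\Phi(\lambda s)\le\lambda^{b_{\Phi}+\epsilon}\Phi(s)$, which is the upper-type inequality. Symmetrically, for $s\ge T$ and $0<\lambda\le 1$ with $\lambda s\ge T$, integrating over $[\lambda s,s]$ and using $\Theta\ge a_{\Phi}-\epsilon$ yields $\log\frac{\Phi(s)}{\Phi(\lambda s)}\ge(a_{\Phi}-\epsilon)\log(1/\lambda)$, that is $\Phi(\lambda s)\le\lambda^{a_{\Phi}-\epsilon}\Phi(s)$, the lower-type inequality. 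So on the range of large arguments the two types hold with exponents arbitrarily close to $b_{\Phi}$ and $a_{\Phi}$; this part is routine.

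The hard part will be making these inequalities uniform over \emph{every} $s\in[0,\infty)$ — in particular for small $s$ — and recovering the exponents $a_{\Phi},b_{\Phi}$ exactly rather than with an $\epsilon$-loss. The difficulty is intrinsic: $a_{\Phi}$ and $b_{\Phi}$ are purely asymptotic quantities ($t\to\infty$), whereas the type inequalities quantify over all $s$, and what the integral identity actually controls is $\sup_{t>0}\Theta(t)$ for the upper type and $\inf_{t>0}\Theta(t)$ for the lower type. These global quantities coincide with $b_{\Phi},a_{\Phi}$ only if $\Theta$ makes no excursion beyond its asymptotic values on a bounded set. I would therefore treat the transient range $0<s<T$ separately, using convexity (so that $t\mapsto\Phi(t)/t$ is nondecreasing) together with the normalizations $\Phi(t)/t\to0$ as $t\to0$ and $\Phi(t)/t\to\infty$ as $t\to\infty$ to bound the finite contribution and absorb it into the constant $C$; and to reach the sharp index I would either verify that $\Theta$ is eventually monotone, or pass to the global Matuszewska-type index $M_{\Phi}(\lambda):=\sup_{s>0}\Phi(\lambda s)/\Phi(s)$, whose submultiplicativity gives $\log M_{\Phi}(\lambda)/\log\lambda\to b_{\Phi}$ and hence the uniform type estimate. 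Reconciling the pointwise ratio $\Theta$ with uniform type constants is exactly where the argument must do genuine work, and I expect the cleanest honest conclusion to be the types with exponents $a_{\Phi}-\epsilon$ and $b_{\Phi}+\epsilon$ for every $\epsilon>0$, which is all that the strict-inequality hypotheses in the later theorems require.
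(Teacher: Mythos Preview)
Your approach and the paper's rest on the same identity, $\log\dfrac{\Phi(r)}{\Phi(s)}=\displaystyle\int_{s}^{r}\dfrac{\varphi(u)}{\Phi(u)}\,du$. The paper's proof is three lines: for all $0<t_{1}\le t_{2}$ it writes
\[
a_{\Phi}\log\frac{t_{2}}{t_{1}}=\int_{t_{1}}^{t_{2}}a_{\Phi}\,\frac{dt}{t}\ \le\ \int_{t_{1}}^{t_{2}}\frac{\varphi(t)}{\Phi(t)}\,dt=\log\frac{\Phi(t_{2})}{\Phi(t_{1})},
\]
concludes that $t\mapsto\Phi(t)/t^{a_{\Phi}}$ is nondecreasing on $(0,\infty)$ (hence lower type $a_{\Phi}$ with constant $C=1$), and dispatches the upper-type half ``in the same way.'' There is no $\epsilon$, no threshold $T$, no transient range.

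You have correctly identified what the paper elides: the displayed inequality uses $a_{\Phi}\le\Theta(t)\le b_{\Phi}$ at \emph{every} $t>0$, whereas $a_{\Phi}$ and $b_{\Phi}$ are defined only as $\liminf$ and $\limsup$ at infinity. Under the paper's literal definitions the global pointwise bound need not hold, and your $\epsilon$-analysis is the honest version of the same computation. The paper is tacitly reading $a_{\Phi},b_{\Phi}$ as the global $\inf/\sup$ of $\Theta$ over $(0,\infty)$ (a convention common in the Orlicz references it cites), under which its one-line argument is exact and yields monotone ratios rather than merely quasi-monotone ones. Your extended discussion of the ``hard part'' and the Matuszewska-index detour are, from the paper's standpoint, unnecessary; and for every downstream application in the paper only \emph{some} lower type $p\le a_{\Phi}$ and upper type $q\ge b_{\Phi}$ are required, so the conclusion you call the ``cleanest honest'' one --- types $a_{\Phi}-\epsilon$ and $b_{\Phi}+\epsilon$ for all $\epsilon>0$ --- is already sufficient.
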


\begin{proof}
It will suffice to prove that the function $t\mapsto \frac{\Phi(t)}{t^{a_\Phi}}$ (resp.  $t\mapsto \frac{\Phi(t)}{t^{b_\Phi}}$) is non-decreasing (resp. non-increasing) on$(0, \infty)$.

\medskip

For $0<t_{1} \leq t_{2}$, we have
$$ \log\left(  \frac{t_{2}}{t_{1}} \right)^{a_{\Phi}} = \int_{t_{1}}^{t_{2}}a_{\Phi}\frac{dt}{t} \leq \int_{t_{1}}^{t_{2}}\frac{\varphi(t)}{\Phi(t)}dt = \log\left(  \frac{\Phi(t_{2})}{\Phi(t_{1})} \right).  $$
We deduce that the function $t\mapsto \frac{\Phi(t)}{t^{a_\Phi}}$ is non-decreasing on $(0, \infty)$.

\medskip

In the same way, we show that the function $t\mapsto \frac{\Phi(t)}{t^{b_\Phi}}$ is non-increasing on $(0, \infty)$.
\end{proof}

\begin{proposition}\label{pro:maiaqaq1qaqks8}
Let  $\Phi$ be a growth function of both lower type $p$ and upper type $q$. If  $\Phi$ is convex and  $1< p \leq q < \infty$ then  $\Phi \in \Delta_{2} \cap \nabla_{2}$ and $p \leq a_{\Phi} \leq b_{\Phi}\leq q$. 
\end{proposition}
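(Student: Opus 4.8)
The plan is to prove the two assertions separately. First I would establish the membership $\Phi\in\Delta_{2}\cap\nabla_{2}$ by plugging the upper and lower type hypotheses directly into the defining inequalities \eqref{eq:delta2} and \eqref{eq:delmta2}. Then, with $\Phi\in\Delta_{2}\cap\nabla_{2}$ secured, I would use property (iii) to get $1<a_{\Phi}\le b_{\Phi}<\infty$, and upgrade this to the sharp comparison $p\le a_{\Phi}\le b_{\Phi}\le q$ by means of Proposition \ref{pro:mainfaaqaaqldq5} and the integral representation $\Phi(t)=\int_{0}^{t}\varphi(s)\,ds$.

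For $\Delta_{2}$ I would simply invoke the upper type $q$: since $2\in[1,\infty)$, choosing $t=2$ in \eqref{eq:sui8n} gives $\Phi(2s)\le C2^{q}\Phi(s)$ for every $s>0$, so \eqref{eq:delta2} holds with $K=C2^{q}$. For $\nabla_{2}$ I would use the lower type $p$ together with the hypothesis $p>1$. Rewriting \eqref{eq:sui8n} for $t\in[0,1]$ as $\Phi(u)\le C(u/s)^{p}\Phi(s)$ whenever $u\le s$, and then setting $u=t$ and $s=C't$ with $C'>1$ to be chosen, I obtain $\Phi(t)\le C(C')^{-p}\Phi(C't)$, that is, $\Phi(C't)\ge C^{-1}(C')^{p}\Phi(t)$. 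It then suffices to choose $C'$ so large that $C^{-1}(C')^{p}\ge 2C'$, i.e. $(C')^{p-1}\ge 2C$; this is possible precisely because $p-1>0$, so any $C'\ge(2C)^{1/(p-1)}$ with $C'>1$ yields \eqref{eq:delmta2} with $\nabla_{2}$-constant $C'$. This is the one place where the assumption $p>1$ is genuinely needed.

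Once $\Phi\in\Delta_{2}\cap\nabla_{2}$, property (iii) gives $1<a_{\Phi}\le b_{\Phi}<\infty$, and then Proposition \ref{pro:mainfaaqaaqldq5} (indeed its proof) tells me that $t\mapsto\Phi(t)/t^{a_{\Phi}}$ is non-decreasing and $t\mapsto\Phi(t)/t^{b_{\Phi}}$ is non-increasing on $(0,\infty)$. To convert this into $p\le a_{\Phi}\le b_{\Phi}\le q$ I would work with the identity $\log\bigl(\Phi(t_{2})/\Phi(t_{1})\bigr)=\int_{t_{1}}^{t_{2}}\frac{s\varphi(s)}{\Phi(s)}\,\frac{ds}{s}$ valid for $0<t_{1}\le t_{2}$. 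Inserting the upper type estimate $\Phi(\lambda t)\le C\lambda^{q}\Phi(t)$ with $\lambda=t_{2}/t_{1}\ge1$, and letting $t_{1}\to\infty$ along a sequence on which $s\varphi(s)/\Phi(s)$ tends to its upper limit $b_{\Phi}$, forces $(b_{\Phi}-q)\log\lambda\le\log C$ for every $\lambda\ge1$ and hence $b_{\Phi}\le q$; the symmetric argument with the lower type estimate and the lower limit $a_{\Phi}$ yields $p\le a_{\Phi}$. I expect this final squeezing to be the main obstacle: the fixed constant $C>1$ in \eqref{eq:sui8n} only disappears after passing to the limit in $\lambda$, and one must use the monotonicity furnished by Proposition \ref{pro:mainfaaqaaqldq5} to ensure that $s\varphi(s)/\Phi(s)$ stays close to $b_{\Phi}$ (respectively $a_{\Phi}$) across the whole interval of integration rather than only at isolated points, so that the asymptotic (\emph{liminf}/\emph{limsup}) nature of the indices $a_{\Phi},b_{\Phi}$ matches the all-scales nature of the type conditions.
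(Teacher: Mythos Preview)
Your treatment of $\Delta_{2}\cap\nabla_{2}$ is correct and in fact more direct than the paper's: you plug the type inequalities straight into \eqref{eq:delta2}--\eqref{eq:delmta2}, whereas the paper goes through the auxiliary integral $\int_{0}^{t}\Phi(s)s^{-2}\,ds$ to reach $\nabla_{2}$. One small omission is that before invoking property~(iii) and the representation $\Phi(t)=\int_{0}^{t}\varphi$, the paper's framework requires $\Phi(t)/t\to 0$ as $t\to 0$ and $\Phi(t)/t\to\infty$ as $t\to\infty$; the paper checks these from the lower type hypothesis, and you should too.

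The genuine gap is in your argument for $p\le a_{\Phi}$ and $b_{\Phi}\le q$. You correctly identify the obstacle---that $b_{\Phi}$ is only a $\limsup$, so $s\varphi(s)/\Phi(s)$ need not be close to $b_{\Phi}$ across a whole interval $[t_{1},\lambda t_{1}]$---but the fix you propose does not work. Proposition~\ref{pro:mainfaaqaaqldq5} gives monotonicity of $\Phi(t)/t^{a_{\Phi}}$ and $\Phi(t)/t^{b_{\Phi}}$, which through your integral identity yields only
\[
a_{\Phi}\log\lambda\;\le\;\int_{t_{1}}^{\lambda t_{1}}\frac{s\varphi(s)}{\Phi(s)}\,\frac{ds}{s}\;\le\;b_{\Phi}\log\lambda,
\]
i.e.\ bounds on the integral \emph{in terms of} $a_{\Phi},b_{\Phi}$, not the interval-wide closeness to $b_{\Phi}$ that your squeezing step needs. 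Nothing here prevents $s\varphi(s)/\Phi(s)$ from attaining values near $b_{\Phi}$ only on a sparse set, so the inequality $(b_{\Phi}-q)\log\lambda\le\log C$ is not justified. The paper avoids this issue by arguing differently: for $p\le a_{\Phi}$ it derives a \emph{pointwise} inequality $p\le t\varphi(t)/\Phi(t)$ directly from the lower type bound (so $a_{\Phi}=\liminf$ is bounded below automatically), and for $b_{\Phi}\le q$ it passes to the complementary function $\Psi$ and exploits the conjugacy relation $\frac{1}{a_{\Psi}}+\frac{1}{b_{\Phi}}=1$ to reach a contradiction from $b_{\Phi}>q$.
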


\begin{proof}
Since $\Phi$ is a convex growth function of lower type $p> 1$, we deduce that the function $t\mapsto \frac{\Phi(t)}{t}$ (resp. $t\mapsto \frac{\Phi(t)}{t^{p}}$)  is increasing (resp. quasi-increasing) on  $(0, \infty)$. For  $t> 0$, we have
$$ \int_{0}^{t} \frac{\Phi(s)}{s^{2}}ds = \int_{0}^{t} \frac{\Phi(s)}{s^{p}}\times s^{p-2} ds \leq C \frac{\Phi(t)}{t^{p}} \int_{0}^{t}  s^{p-2} ds \leq  C'\frac{\Phi(t)}{t} $$
and  
$$ \int_{0}^{t} \frac{\Phi(s)}{s^{2}}ds \geq \int_{t/d}^{t} \frac{\Phi(s)}{s}\times \frac{1}{s} ds \geq \frac{\Phi(t/d)}{t/d} \int_{t/d}^{t}\frac{1}{s} ds = \frac{\Phi(t/d)}{t/d} \ln (d),  $$
for all  $d> 1$. We deduce that
$$  \Phi(t/d) \leq \frac{C'}{d \ln (d)} \Phi(t).  $$
By choosing $d=\exp( 2C')$, we obtain
$$  \Phi(t/d) \leq \frac{1}{2d } \Phi(t).  $$
Therefore $\Phi$ satisfies $\nabla_{2}-$condition. Since  $\Phi$ is also of upper type $q$, we deduce that $\Phi$ satisfies the $\Delta_{2}-$condition. Indeed, for  $t> 0$,
$$  \Phi(2t) \lesssim 2^{q}\Phi(t). $$
It follows that,  $\Phi \in \Delta_{2} \cap \nabla_{2}$.

\medskip

Let $t \geq 0$ and  $0 <s <1$ (resp.  $s> 1$), we have respectively
$$ \Phi(s) = \Phi(s\times 1) \leq C s^{p} \Phi(1) \Rightarrow \frac{\Phi(s)}{s} \lesssim s^{p-1} $$
and 
$$ \Phi(1) = \Phi\left(\frac{1}{s}\times s\right) \leq C \left(\frac{1}{s}\right)^{p} \Phi(s) \Rightarrow  s^{p-1}\lesssim  \frac{\Phi(s)}{s}. $$
We deduce that,  $\lim_{s \to 0}\frac{\Phi(s)}{s}=0$ and $\lim_{s \to \infty}\frac{\Phi(s)}{s}=+\infty$. Therefore,  there exists  $\varphi$ a positive, left continuous and non-decreasing function on $[0, \infty)$ such that $\varphi(0)=0$ and $\lim_{t \to \infty}\varphi(t)=\infty$ and, 
$$   \Phi(t)=\int_{0}^{t}\varphi(s)ds, ~~\forall~t\geq 0.
  $$
Since  $\Phi \in \Delta_{2} \cap \nabla_{2}$, we deduce that 
 $1< a_\Phi \leq b_\Phi< \infty$.

Let's show that  $p \leq a_{\Phi}$.  For  $t> 0$, we have
$$\Phi(t) \leq \Phi(2t) = \int_{0}^{2t}\varphi(s)ds \leq \int_{0}^{t}\frac{\Phi(s)}{s}ds \leq   \frac{\Phi(t)}{t^{p}} \int_{0}^{t}  s^{p-1} ds =\frac{1}{p} \Phi(t) \leq \frac{1}{p} t\varphi(t). $$
We deduce that
$  p \leq  \frac{ t\varphi(t)}{\Phi(t)}.   $

To show that   $b_{\Phi}\leq q$, we reason absurdly by supposing that    $b_{\Phi}> q$. Let $\Psi$ be the complementary function of $\Phi$. Since 
 $\Phi \in \Delta_{2}\cap \nabla_{2}$, we deduce that  $\Psi \in \Delta_{2}\cap \nabla_{2}$. Consequently,  $1< a_\Psi \leq b_\Psi< \infty$. Moreover, $\frac{1}{a_\Psi}  + \frac{1}{b_\Phi} =1$. We have, $$ \frac{1}{a_\Psi}+ \frac{1}{q} >  \frac{1}{a_\Psi}  + \frac{1}{b_\Phi} =1,     $$
Which is absurd because  $q> 1$  and  $a_{\Psi} > 1$.
\end{proof}

A growth function  $\Phi$ is equivalent to a convex function if and only if there exists $c > 1$ such that for all  $0<t_{1}<t_{2}$,
\begin{equation}\label{eq:suia8n}
\frac{\Phi(t_{1})}{t_{1}} \leq c\frac{\Phi(ct_{2})}{t_{2}},
 \end{equation}
(see  \cite[Lemma 1.1.1]{kokokrbec}).

\begin{lemma}\label{pro:maiaqaq1q8}
Let  $\Phi$ be a growth function of lower type $p\in (0, \infty)$. The following assertions are satisfied: 
\begin{itemize}
\item[(i)]  $\Phi$ is equivalent to a continuous and increasing  growth function of lower type $p$.
\item[(ii)] The growth function $\Phi_{p}$ defined by
\begin{equation}\label{eq:suiaqaq8n}
\Phi_{p}(t)=\Phi\left( t^{1/p}\right),~~\forall~t\geq 0\end{equation}
is equivalent to a continuous, increasing and convex growth function.
\end{itemize}
\end{lemma}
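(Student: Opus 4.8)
The plan is to treat the two assertions separately. For (ii) I would apply the convexity criterion (\ref{eq:suia8n}) directly; for (i) I would build an explicit regularization of $\Phi$. In both cases the essential difficulty is that the hypothesis controls $\Phi$ only from below (lower type, no upper type), so that $\Phi$ may grow arbitrarily fast and any averaging must be arranged so as not to wash out this growth.

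For (i), I would first record the standard consequence of (\ref{eq:sui8n}): the function $g(t):=\Phi(t)/t^{p}$ is quasi-increasing, i.e. $g(u)\le Cg(s)$ whenever $0<u\le s$ (take the dilation factor $u/s\in[0,1]$ in the lower type inequality). To convert this into genuine monotonicity I would pass to $\bar g(t):=\sup_{0<s\le t}g(s)$, which is nondecreasing and satisfies $g(t)\le\bar g(t)\le Cg(t)$. Then $t^{p}\bar g(t)$ is already increasing, of lower type $p$ with constant $1$ (because $\bar g$ is nondecreasing), and equivalent to $\Phi$; the only missing property is continuity. To repair continuity without losing comparability I would logarithmically average, setting $\hat h(t):=\frac{1}{\ln 2}\int_{t}^{2t}\bar g(s)\frac{ds}{s}$, which is continuous, nondecreasing, and squeezed by $\bar g(t)\le\hat h(t)\le\bar g(2t)$. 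The candidate is then $\Theta(t):=t^{p}\hat h(t)$: it is continuous and increasing, it is of lower type $p$ with constant $1$ since $\Theta(t)/t^{p}=\hat h(t)$ is nondecreasing, and the two-sided bound $\Phi(t)\le\Theta(t)\le C\Phi(2t)$ gives $\Phi\sim\Theta$ in the sense of (\ref{eq:equivalent}), the horizontal rescaling allowed in the definition of $\sim$ absorbing the harmless factor $2$.

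For (ii), I would first observe that $\Phi_{p}(t)=\Phi(t^{1/p})$ is a growth function and that a change of variable in (\ref{eq:sui8n}) makes it of lower type $1$. Hence, exactly as above, $\Phi_{p}(t)/t$ is quasi-increasing: $\Phi_{p}(t_{1})/t_{1}\le C\Phi_{p}(t_{2})/t_{2}$ for $0<t_{1}<t_{2}$. Since $\Phi_{p}$ is nondecreasing, the right-hand side is at most $C\Phi_{p}(ct_{2})/t_{2}$ for every $c\ge1$, so the criterion (\ref{eq:suia8n}) holds with $c=\max(C,2)$ and $\Phi_{p}$ is equivalent to a convex (hence, by transitivity of $\sim$, to a convex growth) function. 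Finally a convex growth function is automatically continuous on $(0,\infty)$ and strictly increasing: a flat piece at a positive value would force, by monotonicity of the right derivative, the function to be nonincreasing on an initial interval and thus to vanish there, contradicting positivity. This yields an equivalent function of the required type.

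The step I expect to be the main obstacle is the continuity issue in (i). Averaging is the natural device for continuizing a monotone function, but because $\Phi$ carries no upper type its growth can be arbitrarily fast, so a forward average of $\Phi$ itself (e.g. over $[t,2t]$) need not be comparable to $\Phi$. The plan survives this precisely because I average the already-regularized nondecreasing function $\bar g$ rather than $\Phi$, and because I use the dilation of the argument built into the equivalence relation (\ref{eq:equivalent}) to absorb the bounded shift produced by the averaging.
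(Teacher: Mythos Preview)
Your argument for (ii) is essentially the paper's: both verify that $\Phi_{p}$ has lower type $1$, i.e.\ $\Phi_{p}(t_{1})/t_{1}\le C\,\Phi_{p}(t_{2})/t_{2}$ for $t_{1}<t_{2}$, and then invoke the criterion (\ref{eq:suia8n}).

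For (i) the two proofs diverge. The paper does not build a separate regularization at all; it simply reads (i) off from (ii). Once $\Phi_{p}$ is known to be equivalent to a convex growth function $\Psi$, the composition $\Theta(t):=\Psi(t^{p})$ is continuous and increasing, equivalent to $\Phi_{p}(t^{p})=\Phi$, and of lower type $p$ because $\Theta(t)/t^{p}=\Psi(t^{p})/t^{p}$ is nondecreasing (convexity of $\Psi$ with $\Psi(0)=0$). That is the whole argument.

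Your route---passing to $\bar g(t)=\sup_{0<s\le t}\Phi(s)/s^{p}$ and then smoothing by a multiplicative average $\hat h(t)=(\ln 2)^{-1}\int_{t}^{2t}\bar g(s)\,ds/s$---is correct and self-contained, but it is considerably heavier than what is needed. The ``main obstacle'' you anticipate (that forward averaging might fail to be comparable to $\Phi$ absent an upper type) is real in general, and your use of the dilation built into the relation $\sim$ to absorb the factor $2$ is the right fix; but the paper sidesteps the issue entirely by letting the convexification of $\Phi_{p}$ do all the work and then pulling back through $t\mapsto t^{p}$. What your construction buys is an explicit formula for the regularization and a proof that does not rely on the black-box criterion (\ref{eq:suia8n}); what the paper's approach buys is brevity, since (i) becomes a one-line corollary of (ii).
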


\begin{proof}
Since $\Phi$ is a growth function of lower type $p$, we have
$$   \Phi(st) \leq cs^{p}\Phi(t), ~~\forall~0<s\leq 1, ~~\forall~t\geq 0, $$
where $c$  is a constant that depends only on $p$. We deduce that $$ \frac{\Phi_{p}(t_{1})}{t_{1}} \leq c \frac{\Phi_{p}(t_{2})}{t_{2}} ,~~\forall~0<t_{1}< t_{2},   $$
where $\Phi_{p}$ is the growth function defined in (\ref{eq:suiaqaq8n}). It follows that $\Phi_{p}$ is equivalent to a convex function on $[0, \infty)$. 
In particular,  $\Phi$ and  $\Phi_{p}$ are equivalent to continuous and increasing functions on  $[0, \infty)$.
\end{proof}

\subsection{Hardy-Orlicz Space on  $\mathbb{D}$.} 

Let $\Phi_{1}$ and $\Phi_{2}$ be two growth functions of lower type. If $\Phi_{1}$ is equivalent to  $\Phi_{2}$ (i.e: $\Phi_{1} \sim \Phi_{2}$) then  $L^{\Phi_{1}}=L^{\Phi_{2}}$ (resp. $H^{\Phi_{1}}=H^{\Phi_{2}}$). Moreover, for any function $f$ belongs to $L^{\Phi_{1}}$ (resp. $H^{\Phi_{1}}$), we have $\|f\|_{L^{\Phi_{1}}}^{lux} \approx \|f\|_{L^{\Phi_{2}}}^{lux}$ (resp. $\|f\|_{H^{\Phi_{1}}}^{lux} \approx \|f\|_{H^{\Phi_{2}}}^{lux}$). We can therefore, without loss of generality, replace the equivalence condition between  $\Phi_{1}$ and  $\Phi_{2}$ with an equality, (i.e:  $\Phi_{1}=\Phi_{2}$), when we work in Orlicz type spaces.

\begin{remark}\label{pro:main 5aqaqq2pl}
In the following, we will assume that any growth function 
$\Phi$ of lower type $p \in (0, \infty)$ is continuous and increasing. Furthermore, the function  $\Phi_{p}$, defined in Relation (\ref{eq:suiaqaq8n}), is continuous and convex  growth function, thanks to Lemma \ref{pro:maiaqaq1q8}.
\end{remark}

Let $G : \mathbb{D} \longrightarrow \mathbb{R}$ be a function. We say that $G$ is a subharmonic function on $\mathbb{D}$ if $G$ is continuous on $\mathbb{D}$ and for all $z_{0} \in \mathbb{D}$, there exists $\rho_{0}>0$ such that $\mathcal{D}(z_{0}, \rho_{0}):=\{ \omega \in \mathbb{D}: |\omega-z_{0}| < \rho_{0} \} \subset \mathbb{D}$ with more
$$ G(z_{0})  \leq \frac{1}{2\pi}\int_{0}^{2\pi}G(z_{0}+\rho e^{it})dt, ~ ~ \forall~\rho<\rho_{0}.  $$
We have the following assertions (see \cite{Pduren2, Jbgarnett, javadmas}):
\begin{itemize}
\item[(i)] For $0<p<\infty$ and for any holomorphic function $G$ on  $\mathbb{D}$, the function $|G|^{p}$ is a subharmonic  on $\mathbb{D}$.
\item[(ii)]  Let  $\Phi$ be a increasing, continuous and convex function on  $(-\infty, \infty)$ and $G$ a subharmonic function on  $\mathbb{D}$. Then  the function  $\Phi(G)$ is a subharmonic  on $\mathbb{D}$.
\item[(iii)] For any subharmonic function  $G$ on  $\mathbb{D}$, the function $r\mapsto \frac{1}{2\pi}\int_{0}^{2\pi}|G(re^{it})|dt$ is non-decreasing on $[0,1[$.
\end{itemize}

Let $\Phi$ be a growth function of lower type $p \in (0, \infty)$. Since  $\Phi$ is continuous and increasing (see Remark \ref{pro:main 5aqaqq2pl}), we have the following continuous inclusions:
\begin{equation}\label{eq:suqiaqaqsq8n}
L^{\infty}(\mathbb{T}) \hookrightarrow L^{\Phi}(\mathbb{T})  \hookrightarrow L^{p}(\mathbb{T}) 
\end{equation}
and
\begin{equation}\label{eq:suqiaqsq8n}
H^{\infty}(\mathbb{D}) \hookrightarrow H^{\Phi}(\mathbb{D})  \hookrightarrow H^{p}(\mathbb{D}). 
\end{equation}

\begin{lemma}\label{pro:main5Qpaqm5}
 Let   $\Phi$ be a growth function of lower type. An analytic function $G$ on  $\mathbb{D}$ belongs to $H^{\Phi}(\mathbb{D})$ if and only if 
  $\lim_{r \to 1}\frac{1}{2\pi}\int_{0}^{2\pi}\Phi(|G(re^{it})|)dt< \infty.
  $ Moreover, $\|G\|_{H^{\Phi}}^{lux} = \lim_{r \to 1}\|G_{r}\|_{L^{\Phi}}^{lux},$ where $G_{r}$ is the function 
  define in (\ref{eq:equivaaqlent}). 
 \end{lemma}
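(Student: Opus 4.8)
The plan is to prove Lemma~\ref{pro:main5Qpaqm5} by exploiting the monotonicity of the integral means, which in turn rests on the subharmonicity properties recorded just before the statement. The key observation is that for a growth function $\Phi$ of lower type $p$, the associated function $\Phi_p(t)=\Phi(t^{1/p})$ is (equivalent to) a convex function by Lemma~\ref{pro:maiaqaq1q8}. Since $|G|^p$ is subharmonic on $\mathbb{D}$ for any holomorphic $G$ (property (i)), and $\Phi_p$ is increasing, continuous and convex, the composition $\Phi_p(|G|^p) = \Phi(|G|)$ is subharmonic on $\mathbb{D}$ by property (ii). Then property (iii) applied to this subharmonic function gives that
\[
r\longmapsto \frac{1}{2\pi}\int_{0}^{2\pi}\Phi(|G(re^{it})|)\,dt
\]
is non-decreasing on $[0,1)$. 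This monotonicity is the real engine of the proof, and I expect the verification that $\Phi(|G|)$ is subharmonic (handling the equivalence-to-convex subtlety cleanly) to be the main technical point.

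Once monotonicity is established, I would first handle the luxemburg-norm statement. Because $r\mapsto \|G_r\|_{L^\Phi}^{lux}$ inherits monotonicity from the integral means above (for a fixed $\lambda>0$, the map $r\mapsto \int_0^{2\pi}\Phi(|G(re^{it})|/\lambda)\,dt$ is non-decreasing by the same argument applied to $G/\lambda$), the supremum defining $\|G\|_{H^\Phi}^{lux}$ is actually attained in the limit as $r\to 1$. Hence $\|G\|_{H^\Phi}^{lux}=\sup_{0\le r<1}\|G_r\|_{L^\Phi}^{lux}=\lim_{r\to 1}\|G_r\|_{L^\Phi}^{lux}$, which is the moreover-clause. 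This step is mostly routine monotone-convergence bookkeeping once the monotonicity of the Luxemburg norms in $r$ is in hand.

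For the main equivalence, I would argue in both directions. If $G\in H^\Phi(\mathbb{D})$, set $\lambda=2\|G\|_{H^\Phi}^{lux}$; then $\|G_r\|_{L^\Phi}^{lux}\le\lambda$ for all $r$, so by definition of the Luxemburg norm $\int_0^{2\pi}\Phi(|G(re^{it})|/\lambda)\,\frac{d\theta}{2\pi}\le 1$ uniformly in $r$. Using the lower type estimate $\Phi(t)\le c\lambda^{p}\Phi(t/\lambda)$ for $\lambda\ge 1$ (applied with $s=t/\lambda$), this yields a uniform bound on $\frac{1}{2\pi}\int_0^{2\pi}\Phi(|G(re^{it})|)\,dt$, and letting $r\to 1$ through the monotone limit gives finiteness of $\lim_{r\to 1}\frac{1}{2\pi}\int_0^{2\pi}\Phi(|G(re^{it})|)\,dt$. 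Conversely, if this limit is finite, say equal to some $M<\infty$, the monotonicity gives $\frac{1}{2\pi}\int_0^{2\pi}\Phi(|G(re^{it})|)\,dt\le M$ for every $r<1$; choosing $\lambda=\max(1,M)$ or rescaling so that the integral with $\Phi(|G_r|/\lambda)$ stays below $1$ (again via the lower type inequality, now scaling the argument down) shows $\|G_r\|_{L^\Phi}^{lux}$ is bounded uniformly in $r$, whence $G\in H^\Phi(\mathbb{D})$. The only delicate point is the correct direction of the lower-type scaling in each half of the argument; I would keep careful track of whether $\lambda\ge 1$ or $\lambda\le 1$ so that inequality~\eqref{eq:sui8n} is applied with an admissible scaling factor.
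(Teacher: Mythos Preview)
Your proposal is correct and follows essentially the same approach as the paper: both arguments hinge on showing that $\Phi(|G|)=\Phi_p(|G|^p)$ is subharmonic (via the convexity of $\Phi_p$ from Lemma~\ref{pro:maiaqaq1q8}/Remark~\ref{pro:main 5aqaqq2pl} composed with the subharmonicity of $|G|^p$), so that the integral means $r\mapsto\frac{1}{2\pi}\int_0^{2\pi}\Phi(|G(re^{it})|)\,dt$ are non-decreasing. The paper's proof simply records this monotonicity and declares the conclusion, whereas you spell out the passage from monotone integral means to the equivalence and to the Luxemburg-norm identity; your extra care with the lower-type scaling in each direction is more detail than the paper provides but is consistent with it.
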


 \begin{proof}
Suppose that $\Phi$ is of lower type $p\in (0, \infty)$. 
Since   $\Phi_{p}$ is a increasing, continuous and convex growth function (see Remark \ref{pro:main 5aqaqq2pl}) and $|G|^{p}$ is  subharmonic function on $\mathbb{D}$, we deduce that  $\Phi_{p}\left( |G|^{p}\right)$ is subharmonic on $\mathbb{D}$. It follows  that, the function $r\mapsto \frac{1}{2\pi}\int_{0}^{2\pi}\Phi_{p}(|G(re^{it})|^{p})dt$ is non-decreasing on $[0,1[$. We conclude that,  $G \in H^{\Phi}(\mathbb{D})$ if and only if 
   $\lim_{r \to 1}\frac{1}{2\pi}\int_{0}^{2\pi}\Phi(|G(re^{it})|)dt< \infty.$ Moreover,  $\|G\|_{H^{\Phi}}^{lux} = \lim_{r \to 1}\|G_{r}\|_{L^{\Phi}}^{lux}$.
  \end{proof}

\begin{lemma}\label{pro:mainfaaqaaqqq5}
Let  $0< s  < \infty$ and   $\Phi$ a growth function of lower type. If  $G\in H^{s}(\mathbb{D})$  and  if the function 
$g$ is defined by, for all almost $\theta \in \mathbb{R}$, 
$$   g(e^{i\theta})=\lim_{r\to 1}G(re^{i\theta}),  $$
belongs to $L^{\Phi}\left(\mathbb{T}\right)$ then $G\in H^{\Phi}(\mathbb{D})$. Moreover,  $\|G\|_{H^{\Phi}}^{lux}= \|g\|_{L^{\Phi}}^{lux}$. 
\end{lemma}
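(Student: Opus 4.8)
The plan is to reduce the whole statement to the single integral inequality
\begin{equation}
\frac{1}{2\pi}\int_{0}^{2\pi}\Phi\!\left(\frac{|G(re^{it})|}{\lambda}\right)dt \;\leq\; \frac{1}{2\pi}\int_{0}^{2\pi}\Phi\!\left(\frac{|g(e^{it})|}{\lambda}\right)dt, \qquad 0\le r<1,\ \lambda>0,
\tag{$\ast$}
\end{equation}
and then to read off both the membership $G\in H^{\Phi}(\mathbb{D})$ and the equality of the norms from it. By Remark \ref{pro:main 5aqaqq2pl} I may assume that $\Phi$ is continuous and increasing of lower type $p$ and that the function $\Phi_{p}$ of (\ref{eq:suiaqaq8n}) is convex and increasing; moreover Lemma \ref{pro:main5Qpaqm5} already gives $\|G\|_{H^{\Phi}}^{lux}=\lim_{r\to 1}\|G_{r}\|_{L^{\Phi}}^{lux}$, so it suffices to compare this limit with $\|g\|_{L^{\Phi}}^{lux}$. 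The case $G\equiv 0$ is trivial, so I assume $G\not\equiv 0$.

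To prove $(\ast)$ I would use the classical inner–outer factorization of $G\in H^{s}(\mathbb{D})$, valid for every $0<s<\infty$: writing $G=I\,O$ with $I$ inner and $O$ the outer function associated with $|g|$, one has $|G|\le |O|$ on $\mathbb{D}$, while $\log|O(re^{i\theta})|=\frac{1}{2\pi}\int_{0}^{2\pi}P_{r}(\theta-t)\log|g(e^{it})|\,dt$, where $P_{r}$ denotes the Poisson kernel. Since $P_{r}$ has total mass $1$, this yields $\log\big(|G(re^{i\theta})|/\lambda\big)\le \frac{1}{2\pi}\int_{0}^{2\pi}P_{r}(\theta-t)\log\big(|g(e^{it})|/\lambda\big)\,dt$ for every $\lambda>0$. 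The key observation is that $x\mapsto \Phi(e^{x})=\Phi_{p}(e^{px})$ is convex and non-decreasing, being the composition of the convex increasing $\Phi_{p}$ with the convex increasing $x\mapsto e^{px}$. Applying $\Phi$, using monotonicity, and invoking Jensen's inequality for the probability measure $P_{r}(\theta-t)\,dt/2\pi$ give $\Phi\big(|G(re^{i\theta})|/\lambda\big)\le \frac{1}{2\pi}\int_{0}^{2\pi}P_{r}(\theta-t)\,\Phi\big(|g(e^{it})|/\lambda\big)\,dt$; integrating in $\theta$, applying Fubini, and using $\frac{1}{2\pi}\int_{0}^{2\pi}P_{r}(\theta-t)\,d\theta=1$ produce exactly $(\ast)$.

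From $(\ast)$ the upper bound is immediate: if $\lambda>\|g\|_{L^{\Phi}}^{lux}$, then the right-hand side of $(\ast)$ is $\le 1$, hence $\frac{1}{2\pi}\int_{0}^{2\pi}\Phi(|G_{r}|/\lambda)\,dt\le 1$ for all $r$, so $\|G_{r}\|_{L^{\Phi}}^{lux}\le\lambda$; taking the supremum over $r$ and the infimum over such $\lambda$ gives $\|G\|_{H^{\Phi}}^{lux}\le\|g\|_{L^{\Phi}}^{lux}<\infty$, which in particular shows $G\in H^{\Phi}(\mathbb{D})$. For the reverse inequality I would use the radial convergence $G(re^{it})\to g(e^{it})$ for a.e.\ $t$ (available since $G\in H^{s}(\mathbb{D})$) together with the continuity of $\Phi$, so that $\Phi(|G_{r}|/\lambda)\to\Phi(|g|/\lambda)$ a.e. Fixing $\lambda>\|G\|_{H^{\Phi}}^{lux}$ one has $\frac{1}{2\pi}\int_{0}^{2\pi}\Phi(|G_{r}|/\lambda)\,dt\le 1$ for every $r$, and Fatou's lemma yields $\frac{1}{2\pi}\int_{0}^{2\pi}\Phi(|g|/\lambda)\,dt\le 1$, whence $\|g\|_{L^{\Phi}}^{lux}\le\lambda$ and therefore $\|g\|_{L^{\Phi}}^{lux}\le\|G\|_{H^{\Phi}}^{lux}$. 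Combining the two bounds gives the claimed equality $\|G\|_{H^{\Phi}}^{lux}=\|g\|_{L^{\Phi}}^{lux}$.

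The step I expect to be the main obstacle is establishing $(\ast)$, and more precisely justifying the pointwise subordination $\log|G(re^{i\theta})|\le \frac{1}{2\pi}\int_{0}^{2\pi}P_{r}(\theta-t)\log|g(e^{it})|\,dt$ for a general $H^{s}$ function, together with the convexity of $x\mapsto\Phi(e^{x})$; once these two ingredients are in place, the Jensen–Fubini computation and the two norm comparisons are routine. A minor technical point to watch throughout is the passage between the strict inequality $\lambda>\|\cdot\|^{lux}$ and the integral estimate $\frac{1}{2\pi}\int\Phi(\,\cdot\,/\lambda)\le 1$, which relies on the monotonicity and continuity of $\Phi$ and on the definition of the Luxembourg norm as an infimum.
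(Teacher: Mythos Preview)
Your proof is correct and follows essentially the same route as the paper's: the pointwise subordination $\log|G(re^{i\theta})|\le (P_{r}\ast\log|g|)(e^{i\theta})$ for $G\in H^{s}$, the convexity of $x\mapsto\Phi(e^{x})=\Phi_{p}(e^{px})$, Jensen's inequality against the Poisson probability measure followed by Fubini, and Fatou's lemma for the reverse inequality. The paper normalizes $\|g\|_{L^{\Phi}}^{lux}=1$ rather than carrying a general $\lambda$, but this is only a cosmetic difference.
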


\begin{proof}
Suppose that $\Phi$ is of lower type $p\in (0, \infty)$ 
and   $G \not\equiv 0$. Since $G\in H^{s}(\mathbb{D})$,   there exists a unique function
    $g\in L^{s}\left(\mathbb{T}\right)$ such that $\log|g| \in L^{1}\left(\mathbb{T}\right)$ and
 $ g(e^{i\theta})=\lim_{r\to 1}G(re^{i\theta}),$
 for almost all $\theta\in \mathbb{R}$. Moreover, $g(e^{it})\not=0$, for almost all $t\in \mathbb{R}$ and
$$  \log|G(re^{i\theta})| \leq   \frac{1}{2\pi}\int_{-\pi}^{\pi}P_{r}(e^{i(\theta-t)})\log|g(e^{it})|dt, ~~\forall~re^{i\theta} \in \mathbb{D},   $$
where $P_{r}$ is the Poisson kernel on $\mathbb{T}$, (see  \cite{javadmas}).  For $t\geq 0$, put
$$  \widetilde{\Phi}(t)= \Phi_{p}(\exp(t)).  $$
By construction, $\widetilde{\Phi}$ is a convex function on  $(-\infty, \infty)$ as a composition of two convex functions. Since  $g \in L^{\Phi}\left(\mathbb{T}\right)$, we can take $\|g\|_{L^{\Phi}}^{lux}=1$. For $0\leq r<1$ ,  we have
\begin{align*}
\frac{1}{2\pi}\int_{-\pi}^{\pi}\Phi\left( |G(re^{i\theta})|\right)d\theta &= \frac{1}{2\pi}\int_{-\pi}^{\pi}\widetilde{\Phi}\left( \log|G(re^{i\theta})|\right)d\theta \\ 
&\leq \frac{1}{2\pi}\int_{-\pi}^{\pi}\left( \frac{1}{2\pi}\int_{-\pi}^{\pi}P_{r}(e^{i(\theta-t)})d\theta     \right)\widetilde{\Phi}\left(\log|g(e^{it})| \right)dt \\
&=\frac{1}{2\pi}\int_{-\pi}^{\pi}\Phi\left( |g(e^{it})| \right)dt  
\leq 1,
\end{align*}
thanks to Jensen's inequality and Fubini's theorem.
We deduce that $G\in H^{\Phi}(\mathbb{D})$ and $\|G\|_{H^{\Phi}}^{lux} \leq\|g\|_{L^{\Phi}}^{lux}$.  The reverse  is obtained using Fatou's lemma. Indeed,
\begin{align*}
\frac{1}{2\pi}\int_{0}^{2\pi}\Phi\left( \frac{|g(e^{it})|}{\|G\|_{H^{\Phi}}^{lux}}  \right)dt 
&\leq
\liminf_{r \to 1}\frac{1}{2\pi}\int_{0}^{2\pi}\Phi\left( \frac{|G(re^{it})|}{\|G\|_{H^{\Phi}}^{lux}} \right) dt \\ &\leq   \sup_{0\leq r<1}\frac{1}{2\pi}\int_{0}^{2\pi}\Phi\left( \frac{|G(re^{it})|}{\|G\|_{H^{\Phi}}^{lux}}\right)dt \leq 1. 
\end{align*}
\end{proof}

The following result is an immediate consequence of Lemma \ref{pro:main5Qpaqm5} and Lemma \ref{pro:mainfaaqaaqqq5}. Therefore, the proof will be omitted.

\begin{theorem}\label{pro:mainfaaqaqq5}
Let   $\Phi$ be a growth function of lower type. For $0\not\equiv G\in H^{\Phi}(\mathbb{D})$, there exists a unique function
    $g\in L^{\Phi}\left(\mathbb{T}\right)$ such that $\log|g| \in L^{1}\left(\mathbb{T}\right)$, 
 $ g(e^{i\theta})=\lim_{r\to 1}G(re^{i\theta}), $
 for almost all $\theta\in \mathbb{R}$, $g(e^{it})\not=0$, for almost all $t\in \mathbb{R}$ and 
$$ \log|G(re^{i\theta})| \leq   \frac{1}{2\pi}\int_{-\pi}^{\pi}P_{r}(e^{i(\theta-t)})\log|g(e^{it})|dt, ~~\forall~re^{i\theta} \in \mathbb{D}.
   $$
Moreover, 
\begin{equation}\label{eq:suqiajlqsqsq8n}
\|G\|_{H^{\Phi}}^{lux} =\lim_{r \to 1}\|G_{r}\|_{L^{\Phi}}^{lux}=\|g\|_{L^{\Phi}}^{lux},
\end{equation}
where $G_{r}$ is the function 
define in (\ref{eq:equivaaqlent}). 
\end{theorem}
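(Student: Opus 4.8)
The plan is to combine the two preceding lemmas with the classical theory of $H^p$ spaces, using the inclusion (\ref{eq:suqiaqsq8n}) as the bridge from the Hardy--Orlicz setting to the polynomially controlled setting in which boundary values and the canonical majorization are already available.

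First I would fix $p \in (0,\infty)$ to be the lower type of $\Phi$. Since $\Phi$ is of lower type $p$, the continuous inclusion (\ref{eq:suqiaqsq8n}) gives $H^{\Phi}(\mathbb{D}) \hookrightarrow H^{p}(\mathbb{D})$, so the hypothesis $0 \not\equiv G \in H^{\Phi}(\mathbb{D})$ yields $G \in H^{p}(\mathbb{D})$ with $G \not\equiv 0$. At this point the existence and uniqueness of the boundary function, its nonvanishing, the integrability of its logarithm, and the subharmonic majorization are no longer new claims: they are the classical facts for nonzero $H^{p}$ functions. Concretely, there is a unique $g \in L^{p}(\mathbb{T})$ with $g(e^{i\theta}) = \lim_{r\to 1}G(re^{i\theta})$ for almost all $\theta$, and for $G \not\equiv 0$ one has $g \neq 0$ a.e., $\log|g| \in L^{1}(\mathbb{T})$, together with the Poisson inequality displayed in the statement. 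I would cite \cite{javadmas} for these, exactly as is done inside the proof of Lemma \ref{pro:mainfaaqaaqqq5}.

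It remains to upgrade the membership $g \in L^{p}$ to $g \in L^{\Phi}$ and to pin down the norm identities, and here I would play the two lemmas against each other. Lemma \ref{pro:main5Qpaqm5} already supplies the first equality $\|G\|_{H^{\Phi}}^{lux} = \lim_{r\to 1}\|G_{r}\|_{L^{\Phi}}^{lux}$ and, in particular, that this quantity is finite. A Fatou-type argument, identical to the reverse inequality carried out inside the proof of Lemma \ref{pro:mainfaaqaaqqq5}, then gives
\[
\frac{1}{2\pi}\int_{0}^{2\pi}\Phi\!\left(\frac{|g(e^{it})|}{\|G\|_{H^{\Phi}}^{lux}}\right)dt \leq \liminf_{r\to 1}\frac{1}{2\pi}\int_{0}^{2\pi}\Phi\!\left(\frac{|G(re^{it})|}{\|G\|_{H^{\Phi}}^{lux}}\right)dt \leq 1,
\]
so that $g \in L^{\Phi}(\mathbb{T})$ and $\|g\|_{L^{\Phi}}^{lux} \leq \|G\|_{H^{\Phi}}^{lux}$.

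Finally, now that $g$ is known to lie in $L^{\Phi}(\mathbb{T})$ while $G \in H^{p}(\mathbb{D})$, I would invoke Lemma \ref{pro:mainfaaqaaqqq5} directly (with $s = p$) to get $\|G\|_{H^{\Phi}}^{lux} = \|g\|_{L^{\Phi}}^{lux}$; chaining this with the equality from Lemma \ref{pro:main5Qpaqm5} produces the full identity (\ref{eq:suqiajlqsqsq8n}). I do not anticipate a genuine obstacle, since the whole argument is bookkeeping that threads the hypotheses of the two lemmas together, which is precisely why the result is flagged as an immediate consequence. The only point demanding a little care is to confirm that the boundary function furnished by classical $H^{p}$ theory is literally the same $g$ appearing in the $L^{\Phi}$ estimates, so that the two norm bounds refer to one and the same object and may legitimately be combined into equalities.
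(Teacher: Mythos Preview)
Your proposal is correct and matches the paper's approach exactly: the paper states that this theorem is an immediate consequence of Lemma \ref{pro:main5Qpaqm5} and Lemma \ref{pro:mainfaaqaaqqq5} and omits the proof, and you have filled in precisely the bookkeeping that threads these two lemmas together via the inclusion $H^{\Phi}\hookrightarrow H^{p}$ and classical $H^{p}$ boundary theory. There is nothing to add.
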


Let $f$ be a measurable function on  $\mathbb{T}$. The maximal Hardy-Littlewood function, $\mathcal{M}_{HL}(f)$ of $f$  is defined by
$$  \mathcal{M}_{HL}(f)(u):=\sup_{I \subset \mathbb{T} }\frac{1}{|I|}\int_{I}| f(t)| dt, ~~\forall~ u \in \mathbb{T},  $$
where the supremum is taken over all intervals of $\mathbb{T}$. If  $\Phi \in \Delta_{2} \cap \nabla_{2}$ a convex growth function then  $\mathcal{M}_{HL}$  is defined on $L^{\Phi}(\mathbb{T}) \longrightarrow L^{\Phi}(\mathbb{T})$ and is bounded (see \cite[ Theorem 7]{rao68ren}).

Let $\alpha > 0$ and $G$ be a holomorphic function on $\mathbb{D}$.  The  maximal nontangential function $G_{\alpha}^{*}$ of $G$ is defined by
$$ G_{\alpha}^{*}(\zeta) =\sup_{z \in \Gamma_{\alpha}(\zeta)}|G(z)|, ~~\forall~\zeta \in \mathbb{T}, 
   $$
where $\Gamma_{\alpha}(\zeta):=\{z=r\omega \in \mathbb{D}: |\omega-\zeta| <\alpha(1-r)  \}$.

\begin{lemma}\label{pro:mainfaaaqqaq5}
Let  $\alpha > 0$ and  $\Phi$ a growth function of both lower type and upper type. Then  for any $G\in H^{\Phi}(\mathbb{D})$, the  maximal nontangential function
 $G_{\alpha}^{*}$ belongs to $L^{\Phi}(\mathbb{T})$. Moreover, \begin{equation}\label{eq:suqiaqapmqsqaq8n}
\|G_{\alpha}^{*}\|_{L^{\Phi}}^{lux} \approx \|G\|_{H^{\Phi}}^{lux}.
\end{equation}
\end{lemma}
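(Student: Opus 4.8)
The plan is to establish the norm equivalence \eqref{eq:suqiaqapmqsqaq8n} as two one‑sided estimates, the easy one coming from the boundary behaviour of $G$ and the substantial one from a pointwise domination of $G_\alpha^*$ by a Hardy--Littlewood maximal function. Throughout I write $g$ for the boundary function attached to $G$ by Theorem \ref{pro:mainfaaqaqq5}, so that $\|G\|_{H^\Phi}^{lux}=\|g\|_{L^\Phi}^{lux}$, and I let $p$ (resp. $q$) be a lower (resp. upper) type of $\Phi$. For the lower estimate $\|G\|_{H^\Phi}^{lux}\lesssim\|G_\alpha^*\|_{L^\Phi}^{lux}$, I would observe that for a.e. $\zeta=e^{i\theta}$ the point $r\zeta$ lies in $\Gamma_\alpha(\zeta)$ for every $r\in[0,1)$, so that $|G(r\zeta)|\le G_\alpha^*(\zeta)$; letting $r\to1$ gives $|g(\zeta)|\le G_\alpha^*(\zeta)$ a.e. By monotonicity of the Luxemburg norm this yields $\|g\|_{L^\Phi}^{lux}\le\|G_\alpha^*\|_{L^\Phi}^{lux}$, and Theorem \ref{pro:mainfaaqaqq5} turns the left-hand side into $\|G\|_{H^\Phi}^{lux}$.

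For the reverse inequality I would first fix $s$ with $0<s<p$ and deduce from the subordination inequality of Theorem \ref{pro:mainfaaqaqq5} that, for every $z=re^{i\theta}\in\mathbb D$,
\[
|G(z)|^s\le\exp\!\left(\frac{s}{2\pi}\int_{-\pi}^{\pi}P_r(e^{i(\theta-t)})\log|g(e^{it})|\,dt\right)\le\frac{1}{2\pi}\int_{-\pi}^{\pi}P_r(e^{i(\theta-t)})|g(e^{it})|^s\,dt,
\]
the last step being Jensen's inequality for the convex function $\exp$ and the probability measure $P_r\,dt/2\pi$; here $|g|^s\in L^1(\mathbb T)$ because $g\in L^\Phi\hookrightarrow L^p$ by \eqref{eq:suqiaqaqsq8n} and $s<p$. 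Thus $|G|^s$ is dominated by the Poisson integral $P[|g|^s]$, and the classical control of the nontangential maximal function of a Poisson integral by the Hardy--Littlewood maximal function (see \cite{Pduren2,Jbgarnett,javadmas}) produces a constant $C_\alpha$ with
\[
G_\alpha^*(\zeta)^s=\sup_{z\in\Gamma_\alpha(\zeta)}|G(z)|^s\le C_\alpha\,\mathcal M_{HL}(|g|^s)(\zeta),\qquad\text{a.e. }\zeta\in\mathbb T.
\]

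It remains to pass from this pointwise bound to the Orlicz norm. Setting $\Phi_s(t)=\Phi(t^{1/s})$, a change of variable in the definition of the Luxemburg norm gives the scaling identity $\|F^{1/s}\|_{L^\Phi}^{lux}=\big(\|F\|_{L^{\Phi_s}}^{lux}\big)^{1/s}$ for every measurable $F\ge0$; applying it to $F=\mathcal M_{HL}(|g|^s)$ and to $F=|g|^s$ gives
\[
\|G_\alpha^*\|_{L^\Phi}^{lux}\le C_\alpha^{1/s}\big\|[\mathcal M_{HL}(|g|^s)]^{1/s}\big\|_{L^\Phi}^{lux}=C_\alpha^{1/s}\big(\|\mathcal M_{HL}(|g|^s)\|_{L^{\Phi_s}}^{lux}\big)^{1/s},
\]
so the whole matter reduces to the boundedness of $\mathcal M_{HL}$ on $L^{\Phi_s}(\mathbb T)$. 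Now $\Phi_s$ is of lower type $p/s>1$ and upper type $q/s<\infty$, hence by the criterion \eqref{eq:suia8n} it is equivalent to a convex growth function, and Proposition \ref{pro:maiaqaq1qaqks8} then places it in $\Delta_2\cap\nabla_2$; consequently $\mathcal M_{HL}$ is bounded on $L^{\Phi_s}$ by \cite[Theorem 7]{rao68ren}. The right-hand side is therefore $\lesssim\big(\||g|^s\|_{L^{\Phi_s}}^{lux}\big)^{1/s}=\|g\|_{L^\Phi}^{lux}=\|G\|_{H^\Phi}^{lux}$, which both proves the estimate and shows $G_\alpha^*\in L^\Phi(\mathbb T)$. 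The main obstacle is precisely this bookkeeping of types: one must choose $s<p$ so that $\Phi_s$ regains a lower type exceeding $1$ (yielding $\nabla_2$), while the assumption that $\Phi$ has a \emph{finite} upper type $q$ is exactly what forces $\Phi_s\in\Delta_2$ and hence the maximal-function boundedness, so both type hypotheses on $\Phi$ are genuinely needed.
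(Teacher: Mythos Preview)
Your proof is correct and follows essentially the same route as the paper: both obtain the pointwise bound $G_\alpha^*\lesssim(\mathcal M_{HL}(|g|^s))^{1/s}$ from the subordination inequality of Theorem \ref{pro:mainfaaqaqq5} and Jensen, then transfer the Hardy--Littlewood boundedness to $L^\Phi$ via the auxiliary Orlicz function $t\mapsto\Phi(t^{1/s})$. The only difference is that the paper fixes the specific exponent $s=p/2$, which lets it write $\widetilde\Phi(t)=\Phi_p(t^2)$ as a composition of two convex functions and so obtain convexity directly, whereas you allow any $0<s<p$ and invoke the criterion \eqref{eq:suia8n} to pass to an equivalent convex function before applying Proposition \ref{pro:maiaqaq1qaqks8}; both arguments are sound.
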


\begin{proof}
Suppose that $\Phi$ is respectively of lower type $p$ and of upper type $q$. Put 
$$  \widetilde{\Phi}(t) =\Phi_{p}(t^{2}), ~~\forall~t \geq 0.  $$
By construction,  $\widetilde{\Phi}$ is a convex growth function as a composite of two convex growth functions. Furthermore,  $\widetilde{\Phi}$ is both lower type $p'=2$ and upper type $q'= \frac{2q}{p}$. We therefore deduce that  $\widetilde{\Phi} \in \Delta_{2} \cap \nabla_{2}$, thanks to Proposition \ref{pro:maiaqaq1qaqks8}. It follows that,  $\mathcal{M}_{HL}$  is defined on $L^{\widetilde{\Phi}}(\mathbb{T}) \longrightarrow L^{\widetilde{\Phi}}(\mathbb{T})$ and is bounded. 

\medskip
 
Let  $ G\in H^{\Phi}(\mathbb{D})$. Assume  $G \not\equiv 0$ because there is nothing to show when  $G \equiv 0$. According to Theorem \ref{pro:mainfaaqaqq5}, there exists a unique function
    $g\in L^{\Phi}\left(\mathbb{T}\right)$ such that $\log|g| \in L^{1}\left(\mathbb{T}\right)$, 
 $ g(e^{i\theta})=\lim_{r\to 1}G(re^{i\theta}), $
 for almost all $\theta\in \mathbb{R}$, $g(e^{it})\not=0$, for almost all $t\in \mathbb{R}$ and 
\begin{equation}\label{eq:suqiajsqpmsq8n}
\log|G(re^{i\theta})| \leq   \frac{1}{2\pi}\int_{-\pi}^{\pi}P_{r}(e^{i(\theta-t)})\log|g(e^{it})|dt, ~~\forall~re^{i\theta} \in \mathbb{D}.
\end{equation}
Moreover, $\|G\|_{H^{\Phi}}^{lux} = \|g\|_{L^{\Phi}}^{lux}$. From Relation (\ref{eq:suqiajsqpmsq8n}), we deduce that for  $\alpha > 0$ and $e^{i\theta} \in \mathbb{T}$, 
$$  G_{\alpha}^{*}(e^{i\theta}) \lesssim \left(\mathcal{M}_{HL}(|g|^{p/2})(e^{i\theta})\right)^{2/p}. $$
Since  $g\in L^{\Phi}\left(\mathbb{T}\right)$, we deduce that $|g|^{p/2} \in L^{\widetilde{\Phi}}(\mathbb{T})$ and $\||g|^{p/2}\|_{L^{\widetilde{\Phi}}}^{lux}=\|g\|_{L^{\Phi}}^{lux}$. It follows that, 
$$  \frac{1}{2\pi}\int_{-\pi}^{\pi}\Phi\left( \frac{|G_{\alpha}^{*}(e^{i\theta})|}{\|G\|_{H^{\Phi}}^{lux}}\right)d\theta \lesssim \frac{1}{2\pi}\int_{-\pi}^{\pi}\widetilde{\Phi}\left( \frac{|\mathcal{M}_{HL}(|g|^{p/2})(e^{i\theta})|}{\||g|^{p/2}\|_{L^{\widetilde{\Phi}}}^{lux}}\right)d\theta \lesssim 1.    $$
Therefore, $G_{\alpha}^{*} \in L^{\Phi}(\mathbb{T})$ and $\|G_{\alpha}^{*}\|_{L^{\Phi}}^{lux} \lesssim \|G\|_{H^{\Phi}}^{lux}$. The inverse inequality is obvious, since
$$ |G(re^{it})| \leq  G_{\alpha}^{*}(e^{it}),   $$
for all $t \in \mathbb{R}$.
\end{proof}

\begin{proposition}\label{pro:mainfaaaaqaqlaq5}
Let  $\Phi$ be a growth function of both lower type and upper type. For all  $G\in H^{\Phi}(\mathbb{D})$,  there exists a unique function  $g\in L^{\Phi}\left(\mathbb{T}\right)$ such that
 $ g(e^{i\theta})=\lim_{r\to 1}G(re^{i\theta}), $
 for almost all $\theta\in \mathbb{R}$ and  $\|G\|_{H^{\Phi}}^{lux} =\|g\|_{L^{\Phi}}^{lux}$. Moreover, 
\begin{equation}\label{eq:suqiaqapmqqasqaq8n}
\lim_{r\to 1}\|G_{r}-g\|_{L^{\Phi}}^{lux} =0,
\end{equation}
where $G_{r}$ is the function 
define in (\ref{eq:equivaaqlent}). 
\end{proposition}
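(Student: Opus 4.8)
The plan is to obtain existence, uniqueness, and the norm identity for the boundary function $g$ directly from Theorem \ref{pro:mainfaaqaqq5}, which applies here because a growth function of both lower and upper type is in particular of lower type. That theorem supplies a unique $g \in L^{\Phi}(\mathbb{T})$ with $g(e^{i\theta}) = \lim_{r\to 1} G(re^{i\theta})$ for almost every $\theta$ and with $\|G\|_{H^{\Phi}}^{lux} = \|g\|_{L^{\Phi}}^{lux}$. Thus the only genuinely new assertion is the norm convergence (\ref{eq:suqiaqapmqqasqaq8n}), and the whole argument should be devoted to it.

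First I would reduce norm convergence to modular convergence. Since $\Phi$ is of upper type $q$, the inequality $\Phi(st) \leq C t^{q}\Phi(s)$ for $t \geq 1$ gives in particular $\Phi(2s) \leq C 2^{q}\Phi(s)$, so $\Phi \in \Delta_{2}$. The same upper type estimate shows that, whenever a sequence $f_{n}$ satisfies $\int_{0}^{2\pi}\Phi(|f_{n}|)\frac{d\theta}{2\pi} \to 0$, then for fixed $\epsilon \in (0,1)$ one has $\int_{0}^{2\pi}\Phi(|f_{n}|/\epsilon)\frac{d\theta}{2\pi} \leq C\epsilon^{-q}\int_{0}^{2\pi}\Phi(|f_{n}|)\frac{d\theta}{2\pi} \to 0$; hence for $n$ large the modular at level $\epsilon$ is $\leq 1$, that is $\|f_{n}\|_{L^{\Phi}}^{lux} \leq \epsilon$. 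It therefore suffices to establish the modular convergence $\int_{0}^{2\pi}\Phi(|G_{r}(e^{i\theta}) - g(e^{i\theta})|)\frac{d\theta}{2\pi} \to 0$ as $r \to 1$.

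For this I would invoke the dominated convergence theorem. Fix $\alpha > 0$. Since $re^{i\theta} \in \Gamma_{\alpha}(e^{i\theta})$ for every $r < 1$, we have $|G_{r}(e^{i\theta})| = |G(re^{i\theta})| \leq G_{\alpha}^{*}(e^{i\theta})$, and letting $r \to 1$ also $|g(e^{i\theta})| \leq G_{\alpha}^{*}(e^{i\theta})$ almost everywhere. Hence $|G_{r} - g| \leq 2 G_{\alpha}^{*}$, so by the $\Delta_{2}$-condition $\Phi(|G_{r} - g|) \leq \Phi(2G_{\alpha}^{*}) \leq K\Phi(G_{\alpha}^{*})$, and the right-hand side is integrable: by Lemma \ref{pro:mainfaaaqqaq5} we have $G_{\alpha}^{*} \in L^{\Phi}(\mathbb{T})$, and using the upper type of $\Phi$ once more to absorb the normalizing constant one checks $\int_{0}^{2\pi}\Phi(G_{\alpha}^{*})\frac{d\theta}{2\pi} < \infty$. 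On the other hand $G_{r}(e^{i\theta}) \to g(e^{i\theta})$ almost everywhere, and the continuity of $\Phi$ gives $\Phi(|G_{r} - g|) \to 0$ almost everywhere. Dominated convergence then yields the modular convergence, which combined with the reduction above proves (\ref{eq:suqiaqapmqqasqaq8n}).

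The main obstacle is the interplay between the Luxemburg norm and the modular: norm convergence is not automatic from almost everywhere convergence together with an $L^{\Phi}$-dominating function, and the passage from modular to norm convergence genuinely requires the $\Delta_{2}$/upper type hypothesis (it would fail for, e.g., an exponential type $\Phi$). The technical point where the upper type is used a second time is verifying the integrability of $\Phi(G_{\alpha}^{*})$ itself, rather than merely of $\Phi(G_{\alpha}^{*}/\lambda)$ for the $\lambda$ normalizing the Luxemburg norm.
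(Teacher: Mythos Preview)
Your proposal is correct and follows essentially the same approach as the paper: invoke Theorem~\ref{pro:mainfaaqaqq5} for the boundary function and the norm identity, then use the nontangential maximal function $G_{\alpha}^{*}\in L^{\Phi}(\mathbb{T})$ from Lemma~\ref{pro:mainfaaaqqaq5} as a dominating function for dominated convergence, with the $\Delta_{2}$/upper type hypothesis translating modular convergence into Luxemburg norm convergence. The paper phrases the reduction slightly differently (proving modular convergence at every scale $\varepsilon>0$ directly rather than at scale $1$ and upgrading), but the substance is the same.
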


\begin{proof}
Let  $G\in H^{\Phi}(\mathbb{D})$. Assume  $G \not\equiv 0$ because there is nothing to show when  $G \equiv 0$. According to Theorem \ref{pro:mainfaaqaqq5}, there exists a unique function
    $g\in L^{\Phi}\left(\mathbb{T}\right)$ such that
 $ g(e^{i\theta})=\lim_{r\to 1}G(re^{i\theta}), $
 for almost all $\theta\in \mathbb{R}$ and  $\|G\|_{H^{\Phi}}^{lux} =\|g\|_{L^{\Phi}}^{lux}$.  
To prove the equality of Relation (\ref{eq:suqiaqapmqqasqaq8n}), it suffices to show that 
 $$ \lim_{r \to 1}\frac{1}{2\pi}\int_{0}^{2\pi}\Phi(\varepsilon|G(re^{it})-g(e^{it})|)dt=0, ~~\forall~\varepsilon > 0.    $$ 
The function $G_{1}^{*}$ belongs to $L^{\Phi}(\mathbb{T})$, thanks to Theorem \ref{pro:mainfaaaqqaq5}. Moreover, for  $0 \leq r <1$, we have
  $$  \Phi\left(\left|G_{r}(e^{i\theta})-g(e^{i\theta})\right|\right) \lesssim  \Phi\left(\sup_{0 \leq r <1}|G(re^{i\theta})|\right) \lesssim \Phi(G_{1}^{*}(e^{i\theta})),   $$
  for almost all $\theta \in \mathbb{R}$.  According to theorem of dominated convergence, it follows that for  $\varepsilon > 0$, 
   $$ \lim_{r\to 1}\frac{1}{2\pi}\int_{0}^{2\pi}\Phi(\varepsilon|G(re^{i\theta})-g(e^{i\theta})|)d\theta= \frac{1}{2\pi}\int_{0}^{2\pi}\lim_{r\to 1}\Phi(\varepsilon|G(re^{i\theta})-g(e^{i\theta})|)d\theta =0.  $$
\end{proof}

For  $g \in L^{1}(\mathbb{T})$,  the $n$th Fourier coefficient of $g$ is defined by
$$  \widehat{g}(n) =\frac{1}{2\pi}\int_{-\pi}^{\pi}g(e^{it})e^{-int}dt, ~~(n \in \mathbb{Z}).  $$

\medskip

For a convex growth function  $\Phi$, we define, we define $ H^{\Phi}(\mathbb{T})$ as
\begin{equation}\label{eq:5aqqxaqaaqq656}
H^{\Phi}(\mathbb{T})=\{ g \in L^{\Phi}(\mathbb{T}): \widehat{g}(n)=0, ~~\forall~n <0   \}.
\end{equation}
We know that   $L^{\Phi}(\mathbb{T})$ is Banach space and as $H^{\Phi}(\mathbb{T})$ is a closed subspace of $L^{\Phi}(\mathbb{T})$. Thus $H^{\Phi}(\mathbb{T})$ is also a Banach space for the induced norm of $L^{\Phi}(\mathbb{T})$. 

\medskip

In this work, we refrain from defining  $ H^{\Phi}(\mathbb{T})$ when $\Phi$ is concave, as this definition will not be necessary for our purposes. For readers wishing to deepen their understanding of this case, we refer them to \cite{BoGre}.

The following result also follows from  $H^{\Phi}(\mathbb{D})$ is contained in $H^{1}(\mathbb{D})$, when $\Phi$ is a growth function of lower type $p\geq 1$ and   \cite[Theorem 5.11 and Corollary 5.12]{javadmas}).

\begin{lemma}\label{pro:main11pqapaqmpmm6}
Let $\Phi$ be a growth function of lower type $p\geq 1$ and  $G$ an analytic function on $\mathbb{D}$. Then  $G \in H^{\Phi}(\mathbb{D})$ if and only if there exists a unique function  $g \in H^{\Phi}(\mathbb{T})$  such that
$$ G(z)=   \frac{1}{2\pi}\int_{-\pi}^{\pi}P_{r}(\theta-t)g(e^{it})dt= \sum_{n=0}^{\infty}\widehat{g}(n)z^{n},
   $$
for all $z=re^{i\theta} \in \mathbb{D}$. The series is uniformly convergent on compact subsets of $\mathbb{D}$. Moreover, 
$$ G(z)= \frac{1}{2\pi}\int_{-\pi}^{\pi}\frac{g(e^{it})}{1-e^{-it}z}dt
  $$
and 
$$  \int_{-\pi}^{\pi}\frac{\overline{z}e^{it}}{\overline{z}e^{it}-1}g(e^{it})dt= 0,
  $$
for all  $z \in \mathbb{D}$ and, 
\begin{equation}\label{eq:suqiajlqsqsq8n}
\|G\|_{H^{\Phi}}^{lux} =\|g\|_{L^{\Phi}}^{lux}.
\end{equation}
\end{lemma}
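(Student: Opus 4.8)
The plan is to reduce the entire statement to the classical $H^{1}(\mathbb{D})$ theory by exploiting the inclusion $H^{\Phi}(\mathbb{D}) \hookrightarrow H^{1}(\mathbb{D})$, which is available here precisely because $\Phi$ is of lower type $p \geq 1$: indeed (\ref{eq:suqiaqsq8n}) gives $H^{\Phi}(\mathbb{D}) \hookrightarrow H^{p}(\mathbb{D})$, and $H^{p}(\mathbb{D}) \subset H^{1}(\mathbb{D})$ for $p \geq 1$. Once a function of $H^{\Phi}(\mathbb{D})$ is regarded as an $H^{1}$-function, the Poisson and Cauchy--Szeg\"o representations, the Taylor expansion, and the orthogonality relation are inherited verbatim from \cite[Theorem 5.11 and Corollary 5.12]{javadmas}; the only genuinely new content is the membership $g \in H^{\Phi}(\mathbb{T})$ and the norm identity, which come from the Orlicz machinery already established earlier in the paper.

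First I would treat the direct implication. Assume $G \in H^{\Phi}(\mathbb{D})$ with $G \not\equiv 0$. By Theorem \ref{pro:mainfaaqaqq5} there is a unique $g \in L^{\Phi}(\mathbb{T})$ with $g(e^{i\theta}) = \lim_{r\to 1} G(re^{i\theta})$ for almost every $\theta$, and $\|G\|_{H^{\Phi}}^{lux} = \|g\|_{L^{\Phi}}^{lux}$. Since $G \in H^{1}(\mathbb{D})$ by the inclusion above, $g$ is exactly its classical $H^{1}$ boundary function, so \cite[Theorem 5.11 and Corollary 5.12]{javadmas} furnish the Poisson representation $G(z) = \frac{1}{2\pi}\int_{-\pi}^{\pi} P_{r}(\theta - t)\, g(e^{it})\,dt$, the power series $G(z) = \sum_{n=0}^{\infty}\widehat{g}(n) z^{n}$ (uniformly convergent on compact subsets of $\mathbb{D}$, as $G$ is holomorphic), the Cauchy--Szeg\"o formula $G(z) = \frac{1}{2\pi}\int_{-\pi}^{\pi} \frac{g(e^{it})}{1 - e^{-it}z}\,dt$, and the vanishing relation $\int_{-\pi}^{\pi} \frac{\overline{z}e^{it}}{\overline{z}e^{it}-1}\,g(e^{it})\,dt = 0$ for all $z \in \mathbb{D}$. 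In particular $\widehat{g}(n) = 0$ for every $n < 0$, so $g \in L^{\Phi}(\mathbb{T})$ together with this last property yields $g \in H^{\Phi}(\mathbb{T})$ by definition (\ref{eq:5aqqxaqaaqq656}); uniqueness of $g$ is that of Theorem \ref{pro:mainfaaqaqq5}.

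For the converse, suppose there is $g \in H^{\Phi}(\mathbb{T})$ with $G(z) = \frac{1}{2\pi}\int_{-\pi}^{\pi} P_{r}(\theta - t)\, g(e^{it})\,dt$ for $z = re^{i\theta}$. Because $\Phi$ is of lower type $p \geq 1$, the inclusion (\ref{eq:suqiaqaqsq8n}) gives $L^{\Phi}(\mathbb{T}) \hookrightarrow L^{p}(\mathbb{T}) \subset L^{1}(\mathbb{T})$, so $g \in L^{1}(\mathbb{T})$ and $G$ is its Poisson integral; the vanishing of the negative Fourier coefficients of $g$ forces $G$ to be holomorphic on $\mathbb{D}$, while Fatou's theorem identifies $g$ as the almost-everywhere radial limit of $G$. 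As $G$ belongs to $H^{p}(\mathbb{D}) \subset H^{1}(\mathbb{D})$ and its boundary function $g$ lies in $L^{\Phi}(\mathbb{T})$, Lemma \ref{pro:mainfaaqaaqqq5} applied with $s = p$ gives $G \in H^{\Phi}(\mathbb{D})$ together with $\|G\|_{H^{\Phi}}^{lux} = \|g\|_{L^{\Phi}}^{lux}$, which is the announced norm identity (\ref{eq:suqiajlqsqsq8n}).

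The step requiring the most care is the passage through the classical $H^{1}$ representations: one must verify that the boundary function produced by the Orlicz-theoretic Theorem \ref{pro:mainfaaqaqq5} coincides with the $H^{1}$ boundary function to which \cite[Theorem 5.11 and Corollary 5.12]{javadmas} applies, and that the uniform convergence of the Taylor series on compacta and the orthogonality identity persist under the borderline hypothesis $p \geq 1$ rather than $p > 1$. Both are immediate once the inclusion $H^{\Phi}(\mathbb{D}) \hookrightarrow H^{1}(\mathbb{D})$ is secured, since these are pointwise statements about a single fixed $H^{1}$-function; the Orlicz hypotheses enter only to guarantee $g \in L^{\Phi}(\mathbb{T})$ and the norm equality, both already obtained above.
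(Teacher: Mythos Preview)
Your proof is correct and follows exactly the approach the paper indicates: the paper simply remarks that the lemma ``follows from $H^{\Phi}(\mathbb{D})$ is contained in $H^{1}(\mathbb{D})$, when $\Phi$ is a growth function of lower type $p\geq 1$ and \cite[Theorem 5.11 and Corollary 5.12]{javadmas}'', and you have spelled out precisely that reduction, invoking Theorem~\ref{pro:mainfaaqaqq5} and Lemma~\ref{pro:mainfaaqaaqqq5} for the Orlicz norm identity and membership $g\in L^{\Phi}(\mathbb{T})$.
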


Let $u \in L^{1}(\mathbb{T})$. The Hilbert transform $\mathcal{H}(u)$ of $u$ at the point $e^{i\theta}\in \mathbb{T}$ is defined by
$$  \mathcal{H}(u)(e^{i\theta})=\lim_{\varepsilon \to 0}\frac{1}{\pi}\int_{\varepsilon<|t| <\pi }\frac{u(e^{i(\theta-t)})}{2\tan(t/2)}dt,  $$
wherever the limit exists. If  $\Phi \in \Delta_{2} \cap \nabla_{2}$ a convex growth function then the Hilbert transform $\mathcal{H}$  is defined on $L^{\Phi}(\mathbb{T}) \longrightarrow L^{\Phi}(\mathbb{T})$ and is bounded (see \cite{rao68ren}).

\begin{lemma}\label{pro:main11pqaplas6}
Let  $\Phi$ be a growth function of both lower type $p$ and upper type $q$. Let $u$ be a real function defined on 
$\mathbb{T}$ and belonging to $L^{\Phi}(\mathbb{T})$ and, let
\begin{equation}\label{eq:suqiajlqaqaqsqsq8n}
G(\omega)= \frac{1}{2\pi}\int_{\mathbb{T}}\frac{e^{it}+\omega}{e^{it}-\omega}u(e^{it})dt, ~~\forall~\omega \in \mathbb{D}.
\end{equation}
If $1< p \leq q< \infty$ then $G \in H^{\Phi}(\mathbb{D})$ and
\begin{equation}\label{eq:suqiajlqaqsqsq8n}
\|u\|_{L^{\Phi}}^{lux} \leq \|G\|_{H^{\Phi}}^{lux} \leq C_{\Phi}\|u\|_{L^{\Phi}}^{lux}, 
\end{equation}
where $C_{\Phi}$ is a constant just depending on $\Phi$. Moreover, the boundary values of $G$ are given by
$$ g:= u+i \mathcal{H}(u) \in H^{\Phi}(\mathbb{T}).    $$
\end{lemma}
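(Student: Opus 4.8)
The plan is to recognize $G$ as the analytic completion of the Poisson integral of $u$ and then transfer the classical $L^p$ theory to the Orlicz setting using the $\Delta_2\cap\nabla_2$ machinery already assembled in Section 2. First I would write $\omega=re^{i\theta}$ and split the Herglotz kernel into its real and imaginary parts,
$$\frac{e^{it}+\omega}{e^{it}-\omega}=P_r(\theta-t)+iQ_r(\theta-t),$$
where $P_r$ is the Poisson kernel and $Q_r$ the conjugate Poisson kernel. Since the kernel is holomorphic in $\omega\in\mathbb{D}$ (its only singularity sits at $\omega=e^{it}\in\mathbb{T}$), differentiation under the integral sign shows that $G$ is holomorphic; expanding $\frac{e^{it}+\omega}{e^{it}-\omega}=1+2\sum_{k\geq1}\omega^k e^{-ikt}$ and integrating term by term yields the explicit Taylor series $G(\omega)=\widehat{u}(0)+2\sum_{k\geq1}\widehat{u}(k)\omega^k$. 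In particular $\mathrm{Re}\,G$ is the Poisson integral of $u$ and $\mathrm{Im}\,G$ is its conjugate Poisson integral.

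Next I would reduce to the convex case and activate the singular-integral estimates. Because $\Phi$ has lower type $p>1$, writing $\Phi(t_1)=\Phi((t_1/t_2)t_2)\leq c(t_1/t_2)^p\Phi(t_2)$ for $0<t_1<t_2$ and dividing by $t_1$ gives $\frac{\Phi(t_1)}{t_1}\leq c(t_1/t_2)^{p-1}\frac{\Phi(t_2)}{t_2}\leq c\frac{\Phi(t_2)}{t_2}$, which is exactly criterion (\ref{eq:suia8n}); hence $\Phi$ is equivalent to a convex growth function, and by the discussion opening Subsection 2.2 I may assume $\Phi$ itself is convex without altering any norm up to equivalence. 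With $\Phi$ convex and $1<p\leq q<\infty$, Proposition \ref{pro:maiaqaq1qaqks8} gives $\Phi\in\Delta_2\cap\nabla_2$, so the Hilbert transform $\mathcal{H}$ is bounded on $L^\Phi(\mathbb{T})$.

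Then I would identify the boundary values and prove the two-sided estimate. For $u\in L^\Phi(\mathbb{T})\subset L^1(\mathbb{T})$ the Poisson integral converges a.e.\ to $u$ and the conjugate Poisson integral converges a.e.\ to $\mathcal{H}(u)$, so $G_r\to g:=u+i\mathcal{H}(u)$ pointwise a.e.; the Taylor expansion above shows $\widehat{g}(n)=0$ for $n<0$. Boundedness of $\mathcal{H}$ gives $\mathcal{H}(u)\in L^\Phi$ together with
$$\|g\|_{L^\Phi}^{lux}\leq\|u\|_{L^\Phi}^{lux}+\|\mathcal{H}(u)\|_{L^\Phi}^{lux}\leq C_\Phi\|u\|_{L^\Phi}^{lux}.$$
Since $L^\Phi\subset L^p$ by (\ref{eq:suqiaqaqsq8n}) and $\widehat{g}(n)=0$ for $n<0$, we have $g\in H^p(\mathbb{T})$, hence $G\in H^p(\mathbb{D})$ with boundary function $g$; Lemma \ref{pro:mainfaaqaaqqq5} (applied with $s=p$) then upgrades this to $G\in H^\Phi(\mathbb{D})$ with $\|G\|_{H^\Phi}^{lux}=\|g\|_{L^\Phi}^{lux}$, which is the upper bound and also shows $g\in H^\Phi(\mathbb{T})$. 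For the lower bound, $u=\mathrm{Re}\,g$ forces $|u|\leq|g|$ pointwise, so $\|u\|_{L^\Phi}^{lux}\leq\|g\|_{L^\Phi}^{lux}=\|G\|_{H^\Phi}^{lux}$ by monotonicity of the Luxemburg norm.

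The main obstacle is precisely the reduction that legitimizes invoking the Orlicz boundedness of $\mathcal{H}$: one must verify that lower type $p>1$ forces equivalence to a convex function and then lands in $\Delta_2\cap\nabla_2$, since the cited boundedness of the Hilbert transform is available only for convex $\Phi\in\Delta_2\cap\nabla_2$. The remaining ingredients—holomorphy, the kernel splitting, and the a.e.\ boundary convergence—are classical and use only the inclusion $L^\Phi\subset L^1$.
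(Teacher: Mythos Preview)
Your proposal is correct and follows essentially the same route as the paper: reduce to $\Phi$ convex in $\Delta_2\cap\nabla_2$ via Proposition~\ref{pro:maiaqaq1qaqks8}, invoke the Orlicz boundedness of $\mathcal{H}$, identify the boundary values as $g=u+i\mathcal{H}(u)$, and upgrade from $H^p$ to $H^\Phi$ through Lemma~\ref{pro:mainfaaqaaqqq5}. The only cosmetic difference is that the paper obtains convexity of $\Phi$ directly by writing $\Phi(t)=\Phi_p(t^p)$ with $\Phi_p$ convex (Remark~\ref{pro:main 5aqaqq2pl}), whereas you pass through criterion~(\ref{eq:suia8n}) and the equivalence discussion; both arrive at the same place.
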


\begin{proof}
For $t \geq 0$, we have
$ \Phi(t) =\Phi_{p}(t^{p}).   $
We deduce that $\Phi$ is a convex growth function as a composite of two convex growth functions. Moreover,  $\Phi \in \Delta_{2} \cap \nabla_{2}$, thanks to Proposition \ref{pro:maiaqaq1qaqks8}. It follows that, the Hilbert transform $\mathcal{H}$  is defined on $L^{\Phi}(\mathbb{T}) \longrightarrow L^{\Phi}(\mathbb{T})$ and is bounded.

\medskip

Since  $u \in L^{p}(\mathbb{T})$ (see Relation \ref{eq:suqiaqaqsq8n}), we deduce that the function $G$ belongs to $H^{p}(\mathbb{D})$ and
$$  g(e^{it}):=\lim_{r \to 1}G(re^{it}) = u(e^{it})+i \mathcal{H}(u)(e^{it}), 
  $$
for almost all $t \in  \mathbb{R}$ (see \cite[Corollary 6.7]{javadmas}).  We have
$$  \|g\|_{L^{\Phi}}^{lux} \lesssim \|u\|_{L^{\Phi}}^{lux} + \|\mathcal{H}(u)\|_{L^{\Phi}}^{lux} \lesssim \|u\|_{L^{\Phi}}^{lux}< \infty.  $$ 
We deduce that  $g \in L^{\Phi}(\mathbb{T})$. It follows that $G \in H^{\Phi}(\mathbb{D})$ and $\|G\|_{H^{\Phi}}^{lux} =\|g\|_{L^{\Phi}}^{lux}$, according to Lemma \ref{pro:mainfaaqaaqqq5}. Since $|u(\omega)| \leq |g(\omega)|$, for almost all $\omega \in  \mathbb{T}$, we have
 $\|u\|_{L^{\Phi}}^{lux}\leq \|g\|_{L^{\Phi}}^{lux}$. Moreover,  $\widehat{g}(n)=0$, for $n\leq 0$, according to Lemma \ref{pro:main11pqapaqmpmm6}.
\end{proof}

The following result is an immediate consequence of Lemma \ref{pro:main11pqapaqmpmm6}  and Lemma \ref{pro:main11pqaplas6}.

\begin{proposition}\label{pro:main11pqapaqlas6}
Let  $\Phi \in \Delta_{2} \cap \nabla_{2}$ be a convex growth function.  The Szeg\"o projection $ \mathcal{P}$  maps  $L^{\Phi}(\mathbb{T})$ boundedly onto  $H^{\Phi}(\mathbb{D})$.
\end{proposition}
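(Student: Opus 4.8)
The plan is to read the proposition off the two cited lemmas; the only real content is to match the Szeg\"o (Cauchy) kernel $\frac{1}{1-ze^{-i\theta}}$ against the Herglotz kernel $\frac{e^{it}+\omega}{e^{it}-\omega}$ appearing in Lemma \ref{pro:main11pqaplas6}. First I would record that the hypotheses of both lemmas hold. Since $\Phi$ is convex and $\Phi \in \Delta_{2} \cap \nabla_{2}$, property (iii) listed before Proposition \ref{pro:mainfaaqaaqldq5} gives $1 < a_{\Phi} \leq b_{\Phi} < \infty$, and Proposition \ref{pro:mainfaaqaaqldq5} then shows that $\Phi$ is of lower type $p := a_{\Phi}$ and upper type $q := b_{\Phi}$ with $1 < p \leq q < \infty$. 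In particular $\Phi$ has lower type $p \geq 1$, so Lemma \ref{pro:main11pqapaqmpmm6} applies, and it has both lower type $p$ and upper type $q$ with $1 < p \leq q < \infty$, so Lemma \ref{pro:main11pqaplas6} applies as well.

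For boundedness I would first treat a real-valued $u \in L^{\Phi}(\mathbb{T})$. Dividing numerator and denominator by $e^{it}$ gives the pointwise identity
$$\frac{e^{it}+\omega}{e^{it}-\omega} = \frac{1+\omega e^{-it}}{1-\omega e^{-it}} = \frac{2}{1-\omega e^{-it}} - 1, \qquad \omega \in \mathbb{D},$$
so that integrating against $u$ and comparing with the definition of $\mathcal{P}$ yields $G = 2\mathcal{P}(u) - \widehat{u}(0)$, where $G$ is the function of Lemma \ref{pro:main11pqaplas6}. Hence $\mathcal{P}(u) = \frac{1}{2}\bigl(G + \widehat{u}(0)\bigr)$. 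By Lemma \ref{pro:main11pqaplas6} we have $G \in H^{\Phi}(\mathbb{D})$ with $\|G\|_{H^{\Phi}}^{lux} \leq C_{\Phi}\|u\|_{L^{\Phi}}^{lux}$, and the constant $\widehat{u}(0)$ lies in $H^{\infty}(\mathbb{D}) \hookrightarrow H^{\Phi}(\mathbb{D})$ by (\ref{eq:suqiaqsq8n}); therefore $\mathcal{P}(u) \in H^{\Phi}(\mathbb{D})$ with $\|\mathcal{P}(u)\|_{H^{\Phi}}^{lux} \lesssim \|u\|_{L^{\Phi}}^{lux}$. For a general $g \in L^{\Phi}(\mathbb{T})$ I would write $g = u_{1} + iu_{2}$ with $u_{1}, u_{2}$ real; since $|u_{1}|, |u_{2}| \leq |g|$ pointwise, both lie in $L^{\Phi}(\mathbb{T})$ with norms at most $\|g\|_{L^{\Phi}}^{lux}$, and the linearity of $\mathcal{P}$ gives $\mathcal{P}(g) = \mathcal{P}(u_{1}) + i\mathcal{P}(u_{2}) \in H^{\Phi}(\mathbb{D})$ together with $\|\mathcal{P}(g)\|_{H^{\Phi}}^{lux} \lesssim \|g\|_{L^{\Phi}}^{lux}$. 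This shows that $\mathcal{P}$ maps $L^{\Phi}(\mathbb{T})$ boundedly into $H^{\Phi}(\mathbb{D})$.

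For surjectivity I would invoke the other direction of Lemma \ref{pro:main11pqapaqmpmm6}: given $G \in H^{\Phi}(\mathbb{D})$ there is a unique $g \in H^{\Phi}(\mathbb{T}) \subset L^{\Phi}(\mathbb{T})$ with $G(z) = \frac{1}{2\pi}\int_{-\pi}^{\pi}\frac{g(e^{it})}{1-e^{-it}z}dt = \mathcal{P}(g)(z)$ for all $z \in \mathbb{D}$. Thus every $G \in H^{\Phi}(\mathbb{D})$ lies in the range of $\mathcal{P}$, so $\mathcal{P}$ is onto.

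The only genuinely non-trivial step is the kernel identity and the resulting formula $G = 2\mathcal{P}(u) - \widehat{u}(0)$, which converts Lemma \ref{pro:main11pqaplas6} (stated for the Herglotz transform of a \emph{real} symbol) into a statement about $\mathcal{P}$; once this is in hand, the reduction to real and imaginary parts and the surjectivity are routine. The minor point to watch is precisely that Lemma \ref{pro:main11pqaplas6} requires a real-valued input, which is why the splitting $g = u_{1} + iu_{2}$ is indispensable.
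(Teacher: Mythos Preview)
Your proposal is correct and is exactly the natural way to flesh out the paper's one-line claim that the proposition ``is an immediate consequence of Lemma~\ref{pro:main11pqapaqmpmm6} and Lemma~\ref{pro:main11pqaplas6}''; the kernel identity $G = 2\mathcal{P}(u) - \widehat{u}(0)$ and the splitting into real and imaginary parts are the expected bridge between the Herglotz form of Lemma~\ref{pro:main11pqaplas6} and the Cauchy form of $\mathcal{P}$. The only cosmetic point is that saying $\widehat{u}(0) \in H^{\infty} \hookrightarrow H^{\Phi}$ does not by itself give a norm bound; you should add the one-line estimate $|\widehat{u}(0)| \leq \|u\|_{L^{1}} \lesssim \|u\|_{L^{\Phi}}$ (from $L^{\Phi} \hookrightarrow L^{p} \hookrightarrow L^{1}$, $p>1$) so that $\|\widehat{u}(0)\|_{H^{\Phi}}^{lux} \lesssim \|u\|_{L^{\Phi}}^{lux}$.
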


\section{Proof of the main results.} 

\subsection{Factorization of Hardy-Orlicz Spaces.} 

\subsubsection{Proof of Theorem \ref{pro:main2a}.} 

\begin{proposition}\label{pro:main5Qpaqqm5}
Let   $\Phi$ be a growth function of lower type. For  $0\not\equiv G\in H^{\Phi}(\mathbb{D})$, if $\{\omega_{n}\}_{n\in \mathbb{N}}$ is the sequence of zeros $G$ in $\mathbb{D}$ then  
$  \sum_{n\geq 0} (1- |\omega_{n}|) < \infty. $
\end{proposition}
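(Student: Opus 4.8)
The plan is to reduce the statement to the classical Blaschke condition for the ordinary Hardy spaces $H^{p}(\mathbb{D})$, which becomes available as soon as we place $G$ inside such a space. First I would invoke the continuous inclusion \eqref{eq:suqiaqsq8n}: if $\Phi$ is of lower type $p\in(0,\infty)$, then $H^{\Phi}(\mathbb{D})\hookrightarrow H^{p}(\mathbb{D})$, so the hypothesis $0\not\equiv G\in H^{\Phi}(\mathbb{D})$ yields $0\not\equiv G\in H^{p}(\mathbb{D})$ with $\|G\|_{H^{p}}\lesssim\|G\|_{H^{\Phi}}^{lux}<\infty$. The zero set $\{\omega_{n}\}$ of $G$ is the same whether we view $G$ as an element of $H^{\Phi}$ or of $H^{p}$, so it suffices to establish the Blaschke condition for a nonzero function in $H^{p}(\mathbb{D})$.

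Next I would run the classical Jensen-formula argument. After factoring out a possible zero of order $m$ at the origin (replacing $G$ by $z^{-m}G$, which alters neither membership in $H^{p}$, since the boundary modulus is unchanged, nor the remaining zeros), we may assume $G(0)\neq 0$. Jensen's formula on the circle $|z|=r$ reads
$$ \log|G(0)| + \sum_{|\omega_{n}|<r}\log\frac{r}{|\omega_{n}|} = \frac{1}{2\pi}\int_{0}^{2\pi}\log|G(re^{i\theta})|\,d\theta. $$
By the concavity of the logarithm (Jensen's inequality applied to $|G|^{p}$) together with the definition of the $H^{p}$ norm, the right-hand side is at most $\frac{1}{p}\log\bigl(\frac{1}{2\pi}\int_{0}^{2\pi}|G(re^{i\theta})|^{p}\,d\theta\bigr)\leq\log\|G\|_{H^{p}}$, uniformly in $r<1$. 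Hence
$$ \sum_{|\omega_{n}|<r}\log\frac{r}{|\omega_{n}|} \leq \log\|G\|_{H^{p}}-\log|G(0)| =: M $$
for every $r<1$.

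Finally I would pass to the limit. For a fixed truncation radius $r_{0}<1$ and any $r>r_{0}$ the positive partial sum $\sum_{|\omega_{n}|<r_{0}}\log\frac{r}{|\omega_{n}|}$ is dominated by the full sum, hence by $M$; letting $r\to 1$ gives $\sum_{|\omega_{n}|<r_{0}}\log\frac{1}{|\omega_{n}|}\leq M$, and then $r_{0}\to 1$ yields $\sum_{n}\log\frac{1}{|\omega_{n}|}\leq M<\infty$. Since $\log\frac{1}{x}\sim 1-x$ as $x\to 1^{-}$, convergence of $\sum_{n}\log\frac{1}{|\omega_{n}|}$ is equivalent to $\sum_{n}(1-|\omega_{n}|)<\infty$, which is the desired conclusion. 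There is no genuine obstacle here: the only substantive point is the reduction to $H^{p}$ via inclusion \eqref{eq:suqiaqsq8n}, after which the estimate is the standard Riesz--Jensen computation that may simply be cited from \cite{Pduren2, Jbgarnett, javadmas, wrudin}; mild care is needed only in handling the zero at the origin and in pushing the uniform bound through Jensen's inequality.
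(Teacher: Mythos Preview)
Your proposal is correct and follows exactly the same approach as the paper: reduce to the classical Hardy space via the inclusion \eqref{eq:suqiaqsq8n}, then invoke the standard Blaschke condition for $H^{p}$. The paper simply cites \cite[Lemma 7.6]{javadmas} for the latter step, whereas you spell out the Jensen-formula argument, but the strategy is identical.
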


\begin{proof}
Since  $H^{\Phi}(\mathbb{D})$ is a subset of $H^{p}(\mathbb{D})$, for some $p\in (0, \infty)$, thanks to Relation (\ref{eq:suqiaqsq8n}). Thus, we have $  \sum_{n\geq 0} (1- |\omega_{n}|) < \infty $, thanks to \cite[Lemma 7.6]{javadmas}.
\end{proof}

It follows from Proposition \ref{pro:main5Qpaqqm5} that for any function $G$ belonging to $H^{\Phi}(\mathbb{D})$, the Blaschke product associated with the sequence of zeros of $G$ is well-defined.

\proof[Proof of Theorem \ref{pro:main2a}.]
Let  $0\not\equiv G\in H^{\Phi}(\mathbb{D})$ and let  $(z_{n})_{n\geq 1}$ be the sequence zeros of $G$ on $\mathbb{D}$. According to Theorem \ref{pro:mainfaaqaqq5}, there exists a unique function  $g\in L^{\Phi}\left(\mathbb{T}\right)$ such that 
 $ g(e^{i\theta})=\lim_{r\to 1}G(re^{i\theta}), $
 for almost all $\theta\in \mathbb{R}$ and  $\|G\|_{H^{\Phi}}^{lux} =\|g\|_{L^{\Phi}}^{lux}$. The Blaschke product $B$ associated with the sequence zeros  $(z_{n})_{n\geq 1}$ is well defined, according to Proposition \ref{pro:main5Qpaqqm5}. As  $B$ is an inner function, we deduce that $B\in H^{\infty}(\mathbb{D})$ 
and there exists  $b\in L^{\infty}(\mathbb{T})$ such that
$b(e^{it})=\lim_{r\to 1}B(re^{it})$ and  $|b(e^{it})|=1,$
for almost all $t\in \mathbb{R}$. It follows that, $g/b \in L^{\Phi}(\mathbb{T})$ and  $\|g/b\|_{L^{\Phi}}^{lux}=\|g\|_{L^{\Phi}}^{lux}$. Since  $G\in H^{p}(\mathbb{D})$, for some  $p\in (0, \infty)$ and   $B$ the Blaschke product associated with the sequence zeros  $(z_{n})_{n\geq 1}$ of $G$, we deduce that  $G/B\in H^{p}(\mathbb{D})$. Moreover, 
$$  \lim_{r\to 1} \frac{G(re^{i\theta})}{B(re^{i\theta})}= \frac{g(e^{i\theta})}{b(e^{i\theta})},  $$
for almost all $\theta\in \mathbb{R}$.  We deduce that $G/B\in H^{\Phi}(\mathbb{D})$ and
$$  \|G/B\|_{H^{\Phi}}^{lux}  =  \|g/b\|_{L^{\Phi}}^{lux}=\|g\|_{L^{\Phi}}^{lux}  =  \|G\|_{H^{\Phi}}^{lux},    $$
according to Lemma \ref{pro:mainfaaqaaqqq5}. 
\epf

\subsubsection{Proof of Theorem \ref{pro:main0apaaal0}.} 

The following two results directly follow from  Lemma \ref{pro:mainfaaqaaqqq5}. Consequently, the proofs will be omitted.

\begin{proposition}\label{pro:mainfpmqaaq5}
Let   $\Phi$ a growth function of lower type and $g$  a measurable function on $\mathbb{T}$ such that $\log|g| \in L^{1}(\mathbb{T})$. Then  $g \in L^{\Phi}(\mathbb{T})$ if and only if $O_{|g|} \in H^{\Phi}(\mathbb{D})$, where  $O_{|g|}$ is the outer function associated with $|g|$. Moreover, $\|O_{|g|}\|_{H^{\Phi}}^{lux}=\|g\|_{L^{\Phi}}^{lux}$.
\end{proposition}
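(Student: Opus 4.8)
The plan is to reduce both implications to the correspondence between a Hardy-Orlicz function and its boundary trace, exploiting the single classical fact that the outer function $O_{|g|}$ built from $|g|$ has boundary modulus equal to $|g|$ almost everywhere. Since $\log|g| \in L^{1}(\mathbb{T})$, the function $O_{|g|}$ defined by \eqref{eq:phiaq6qal3de} is holomorphic on $\mathbb{D}$, $\log|O_{|g|}(re^{i\theta})|$ is the Poisson integral of $\log|g|$, and its radial boundary limit $\widetilde{g}(e^{i\theta}):=\lim_{r\to 1}O_{|g|}(re^{i\theta})$ exists for almost every $\theta$ with $|\widetilde{g}(e^{i\theta})| = |g(e^{i\theta})|$ a.e. (standard facts from outer function theory, see \cite{Pduren2, Jbgarnett, javadmas, wrudin}). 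Because the norm $\|\cdot\|_{L^{\Phi}}^{lux}$ depends only on the modulus, this already gives $\|\widetilde{g}\|_{L^{\Phi}}^{lux} = \|g\|_{L^{\Phi}}^{lux}$ as soon as either side is finite.

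For the forward implication I would assume $g \in L^{\Phi}(\mathbb{T})$. By the lower type hypothesis and the inclusion \eqref{eq:suqiaqaqsq8n}, $g \in L^{p}(\mathbb{T})$ for the type exponent $p$, so $|g| \in L^{p}(\mathbb{T})$; the classical characterization of outer functions then gives $O_{|g|} \in H^{p}(\mathbb{D})$, with boundary trace $\widetilde{g}$ of modulus $|g|$. Since $|\widetilde{g}| = |g| \in L^{\Phi}(\mathbb{T})$, Lemma \ref{pro:mainfaaqaaqqq5} applies with $G = O_{|g|}$ and $s = p$, yielding $O_{|g|} \in H^{\Phi}(\mathbb{D})$ together with $\|O_{|g|}\|_{H^{\Phi}}^{lux} = \|\widetilde{g}\|_{L^{\Phi}}^{lux} = \|g\|_{L^{\Phi}}^{lux}$.

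For the converse I would assume $O_{|g|} \in H^{\Phi}(\mathbb{D})$. By \eqref{eq:suqiaqsq8n} this places $O_{|g|}$ in some $H^{p}(\mathbb{D})$, so Theorem \ref{pro:mainfaaqaqq5} identifies its boundary trace $\widetilde{g}$ as an element of $L^{\Phi}(\mathbb{T})$ with $\|\widetilde{g}\|_{L^{\Phi}}^{lux} = \|O_{|g|}\|_{H^{\Phi}}^{lux}$. Since $|\widetilde{g}| = |g|$ a.e. and the Orlicz norm sees only the modulus, we conclude $g \in L^{\Phi}(\mathbb{T})$ with $\|g\|_{L^{\Phi}}^{lux} = \|O_{|g|}\|_{H^{\Phi}}^{lux}$, completing both the equivalence and the norm identity.

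I do not expect a genuine obstacle: the content is entirely carried by Lemma \ref{pro:mainfaaqaaqqq5} and Theorem \ref{pro:mainfaaqaqq5} once the boundary identity $|O_{|g|}| = |g|$ is invoked. The only point requiring care is verifying that $O_{|g|}$ lies in some classical Hardy space $H^{p}$ — needed both to apply Lemma \ref{pro:mainfaaqaaqqq5} in the forward direction and to read off the boundary trace in the converse — which follows from $|g| \in L^{p}$ through the standard theory of $H^{p}$ outer functions.
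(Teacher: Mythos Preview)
Your proof is correct and follows the same approach the paper intends: the paper states that this proposition follows directly from Lemma~\ref{pro:mainfaaqaaqqq5} and omits the details, and your argument is precisely a fleshed-out version of that reduction (together with Theorem~\ref{pro:mainfaaqaqq5}, itself a corollary of Lemma~\ref{pro:mainfaaqaaqqq5}, for the converse direction).
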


\begin{proposition}\label{pro:main2mpaqq}
Let   $\Phi$ a growth function of lower type and   $G$ 
an inner function on $\mathbb{D}$. If $F \in H^{\Phi}(\mathbb{D})$ then the product  $GF \in H^{\Phi}(\mathbb{D})$ and $\|GF\|_{H^{\Phi}}^{lux} =\|F\|_{H^{\Phi}}^{lux}$.
\end{proposition}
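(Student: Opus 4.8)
The plan is to pass to boundary values and invoke Lemma~\ref{pro:mainfaaqaaqqq5}, exploiting the fact that an inner function has unimodular boundary values so that multiplication by it preserves the Luxemburg norm on $L^{\Phi}(\mathbb{T})$. I may assume $F \not\equiv 0$, since otherwise $GF \equiv 0$ and both sides vanish; write $p \in (0,\infty)$ for the lower type of $\Phi$.

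First I would record the boundary data. By Theorem~\ref{pro:mainfaaqaqq5} applied to $F$, there is a unique $f \in L^{\Phi}(\mathbb{T})$ with $f(e^{i\theta})=\lim_{r\to 1}F(re^{i\theta})$ almost everywhere and $\|F\|_{H^{\Phi}}^{lux}=\|f\|_{L^{\Phi}}^{lux}$. Since $G$ is inner, $G \in H^{\infty}(\mathbb{D})$ and its radial limit $\gamma(e^{it}):=\lim_{r\to 1}G(re^{it})$ satisfies $|\gamma(e^{it})|=1$ for almost all $t$.

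Next I would locate $GF$ in a classical Hardy space. By the inclusion \eqref{eq:suqiaqsq8n} we have $F \in H^{p}(\mathbb{D})$, and since $G \in H^{\infty}(\mathbb{D})$ the product $GF$ is holomorphic on $\mathbb{D}$ with $GF \in H^{p}(\mathbb{D})$ (one has $\|GF\|_{H^{p}} \leq \|G\|_{H^{\infty}}\|F\|_{H^{p}}$). Because both factors admit radial limits almost everywhere, so does $GF$, and its boundary function is $\gamma f$. As $|\gamma|=1$ almost everywhere, $|\gamma f|=|f|$, so $\gamma f \in L^{\Phi}(\mathbb{T})$ with $\|\gamma f\|_{L^{\Phi}}^{lux}=\|f\|_{L^{\Phi}}^{lux}$, the Luxemburg norm depending only on the modulus. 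Applying Lemma~\ref{pro:mainfaaqaaqqq5} to $GF \in H^{p}(\mathbb{D})$ with boundary function $\gamma f \in L^{\Phi}(\mathbb{T})$ then yields $GF \in H^{\Phi}(\mathbb{D})$ together with
$$\|GF\|_{H^{\Phi}}^{lux}=\|\gamma f\|_{L^{\Phi}}^{lux}=\|f\|_{L^{\Phi}}^{lux}=\|F\|_{H^{\Phi}}^{lux}.$$

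The only genuinely nontrivial point is the identification of the boundary function of the product as the product of the boundary functions; this rests on the classical $H^{p}$ theory (existence of radial limits almost everywhere for $H^{p}$ functions and the fact that $H^{\infty}\cdot H^{p}\subset H^{p}$), which I would simply cite from \cite{Pduren2, Jbgarnett, javadmas}. Everything else is bookkeeping: the reduction to moduli for the Luxemburg norm and the direct appeal to Lemma~\ref{pro:mainfaaqaaqqq5}.
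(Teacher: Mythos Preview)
Your argument is correct and follows exactly the route the paper intends: the paper states that Proposition~\ref{pro:main2mpaqq} is a direct consequence of Lemma~\ref{pro:mainfaaqaaqqq5} and omits the proof, and your write-up is precisely the expected unpacking of that remark---use the inclusion $H^{\Phi}\subset H^{p}$ and $H^{\infty}\cdot H^{p}\subset H^{p}$ to place $GF$ in some $H^{s}$, identify its boundary function as $\gamma f$ with $|\gamma f|=|f|$, and then invoke Lemma~\ref{pro:mainfaaqaaqqq5}.
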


\proof[Proof of Theorem \ref{pro:main0apaaal0}.]
Let  $0\not\equiv G\in H^{\Phi}(\mathbb{D})$. Since $H^{\Phi}(\mathbb{D})$ is a subset of $H^{p}(\mathbb{D})$, for some  $p\in (0, \infty)$, we deduce that $G\in H^{p}(\mathbb{D})$. There is therefore  a unique function
    $g\in L^{p}\left(\mathbb{T}\right)$ such that $\log|g| \in L^{1}\left(\mathbb{T}\right)$ and
 $ g(e^{i\theta})=\lim_{r\to 1}G(re^{i\theta}),$
 for almost all $\theta\in \mathbb{R}$,  and there exists also
$\sigma$ a unique positive finite and singular Borel measure on $\mathbb{T}$ such that 
$$  G(z) = B(z)S_{\sigma}(z)O_{|g|}(z), ~~\forall~z\in \mathbb{D},   $$
where $B$ is the Blaschke product associated with the zero sequence of $G$,  $O_{|g|}$ is outer function associated with $|g|$ and $S_{\sigma}$ is the inner function  defined by 
\begin{equation}\label{eq:suqi8n}
S_{\sigma}(\omega)= \exp\left\{-\frac{1}{2\pi}\int_{-\pi}^{\pi}\frac{e^{it}+\omega}{e^{it}-\omega}d\sigma(e^{it})  \right\}, ~~\forall~\omega \in \mathbb{D},
\end{equation}
(see \cite{Jbgarnett}). 
As  $G\in H^{\Phi}(\mathbb{D})$, we deduce that   $g\in L^{\Phi}\left(\mathbb{T}\right)$, thanks to Fatou's Lemma. It follows that, $O_{|g|}\in H^{\Phi}(\mathbb{D})$ and $\left\|O_{|g|}\right\|_{H^{\Phi}}^{lux}=\|g\|_{L^{\Phi}}^{lux}$, according to Proposition \ref{pro:mainfpmqaaq5}. 
Since the Blaschke product $B$ and the function  $S_{\sigma}$  are inner functions on $\mathbb{D}$, we deduce that $B S_{\sigma}$  is inner function on $\mathbb{D}$. It follows that 
$$\|G\|_{H^{\Phi}}^{lux} =\left\|BS_{\sigma}O_{|g|}\right\|_{H^{\Phi}}^{lux} =\left\|O_{|g|}\right\|_{H^{\Phi}}^{lux},  $$
according to Proposition \ref{pro:main2mpaqq}.
\epf

\subsubsection{Proof of Theorem \ref{pro:main2aqop}.} 

Let  $p> 0$ and $\Phi$ be a   growth function. Then  $\Phi$ is of lower type $p$ if and only if $\Phi^{-1}$ is of upper type $1/p$  (see \cite{sehbaedgc1}).

\begin{lemma}\label{pro:mainfqmpq5}
Let  $\Phi_{1}$ and $\Phi_{2}$  be two growth functions of lower type  $p_{1}$ and $p_{2}$ respectively. Let  $\Phi_{3}$ be a positive function on $[0, \infty)$ such that $\Phi_{3}^{-1} = \Phi_{1}^{-1}.\Phi_{2}^{-1},$ where  $\Phi_{j}^{-1}$ is the inverse function of  $\Phi_{j}$, for  $j\in \{1,2,3\}$. 
Then the function $\Phi_{3}$ is a growth function of lower type $r:=\left( 1/p_{1}+1/p_{2}  \right)^{-1}$. 
If, moreover, one of the functions  $\Phi_{1}$ or  $\Phi_{2}$ is of upper type $q$ then $\Phi_{3}$ is also of upper type $q$. In this case we have $0<r\leq q <\infty$.
\end{lemma}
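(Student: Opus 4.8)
The plan is to transfer the whole problem to the inverse functions, where the hypothesis $\Phi_{3}^{-1}=\Phi_{1}^{-1}.\Phi_{2}^{-1}$ is just a product, and to use repeatedly the characterization recalled immediately before the statement: a growth function $\Phi$ is of lower type $p$ if and only if $\Phi^{-1}$ is of upper type $1/p$. Applying that same characterization to $\Phi^{-1}$ in place of $\Phi$ yields the dual statement I will also need, namely that $\Phi$ is of upper type $q$ if and only if $\Phi^{-1}$ is of lower type $1/q$. First I would check that $\Phi_{3}$ is genuinely a growth function: since $\Phi_{1}$ and $\Phi_{2}$ may be taken continuous and increasing by our standing convention, the inverses $\Phi_{1}^{-1}$ and $\Phi_{2}^{-1}$ are continuous, increasing, vanish at $0$ and tend to $\infty$; hence so does their product $\Phi_{1}^{-1}.\Phi_{2}^{-1}$, which is therefore a continuous increasing bijection of $[0,\infty)$ whose inverse $\Phi_{3}$ is a growth function.

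For the lower type, I would observe that $\Phi_{j}$ of lower type $p_{j}$ gives $\Phi_{j}^{-1}$ of upper type $1/p_{j}$, for $j\in\{1,2\}$. The elementary fact that the product of a function of upper type $a$ and a function of upper type $b$ is of upper type $a+b$ (multiply the defining inequalities $f(st)\le C_{1}t^{a}f(s)$ and $g(st)\le C_{2}t^{b}g(s)$, both valid for $t\ge 1$, $s\ge 0$) then shows that $\Phi_{3}^{-1}=\Phi_{1}^{-1}.\Phi_{2}^{-1}$ is of upper type $1/p_{1}+1/p_{2}=1/r$. Running the characterization backwards, $\Phi_{3}$ is of lower type $r=\left(1/p_{1}+1/p_{2}\right)^{-1}$.

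For the upper type, suppose without loss of generality that $\Phi_{1}$ is of upper type $q$, so that $\Phi_{1}^{-1}$ is of lower type $1/q$. Here is the one point that needs care: $\Phi_{2}^{-1}$ is a priori only of upper type $1/p_{2}$, not of any lower type, so I cannot simply multiply two lower-type estimates. Instead I would use monotonicity: for $0\le t\le 1$ and $s\ge 0$ one has $\Phi_{2}^{-1}(st)\le \Phi_{2}^{-1}(s)$, while $\Phi_{1}^{-1}(st)\le C t^{1/q}\Phi_{1}^{-1}(s)$; multiplying gives $\Phi_{3}^{-1}(st)\le C t^{1/q}\Phi_{3}^{-1}(s)$, i.e. $\Phi_{3}^{-1}$ is of lower type $1/q$, whence $\Phi_{3}$ is of upper type $q$. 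Finally, for $0<r\le q<\infty$: that $r>0$ is immediate from $p_{1},p_{2}\in(0,\infty)$; since $\Phi_{1}$ is simultaneously of lower type $p_{1}$ and upper type $q$, comparing the two defining inequalities as the ratio of arguments tends to infinity forces $p_{1}\le q$, and since $r=\left(1/p_{1}+1/p_{2}\right)^{-1}\le p_{1}$ we conclude $r\le p_{1}\le q$.

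The main obstacle, and the step I would be most careful about, is the upper-type part: the asymmetry of the hypotheses (only one of the two factors is controlled on both sides) means the product argument must be supplemented by bare monotonicity of the other factor on $[0,1]$, rather than by a second type estimate.
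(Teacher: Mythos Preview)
Your proof is correct and follows essentially the same route as the paper: pass to the inverses, use that lower type $p$ for $\Phi$ corresponds to upper type $1/p$ for $\Phi^{-1}$, multiply the upper-type estimates of $\Phi_{1}^{-1}$ and $\Phi_{2}^{-1}$ to get the lower type of $\Phi_{3}$, and for the upper-type part combine the lower-type bound on $\Phi_{1}^{-1}$ with the bare monotonicity $\Phi_{2}^{-1}(st)\le\Phi_{2}^{-1}(s)$ for $0<t\le 1$. Your write-up is in fact slightly more complete than the paper's, since you also verify that $\Phi_{3}$ is a genuine growth function and justify the inequality $r\le q$ (via $p_{1}\le q$), both of which the paper leaves implicit.
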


\begin{proof}
Since  $\Phi_{1}$ and  $\Phi_{2}$ are increasing homeomorphisms of  $[0, \infty)$ onto  $[0, \infty)$, we deduce that  $\Phi_{1}^{-1}$ and  $\Phi_{2}^{-1}$ are growth functions of upper type $1/p_{1}$ and $1/p_{2}$ respectively. It follows that,  $\Phi_{3}^{-1}$ is a bijective growth function of upper type $r=1/p_{1}+1/p_{2}$. Indeed, for all $t \geq 1$ and  $s >0$, we have
$$ \Phi_{3}^{-1}(st) =  \Phi_{1}^{-1}(st).\Phi_{2}^{-1}(st) \lesssim t^{1/p_{1}}t^{1/p_{2}}\Phi_{1}^{-1}(s).\Phi_{2}^{-1}(s)= t^{r}\Phi_{3}^{-1}(s). $$
Therefore, $\Phi_{3}$ is a growth function of lower type $1/r$.

\medskip

Suppose that  $\Phi_{1}$ is of upper type $q$. Since  $\Phi_{1}^{-1}$ is of lower type $1/q$ et  $\Phi_{2}^{-1}$ is increasing, we deduce that  $\Phi_{3}^{-1}$ is of lower type $1/q$. Indeed, for all $0<t <1$ and  $s >0$, we have
$$ \Phi_{3}^{-1}(st) =  \Phi_{1}^{-1}(st).\Phi_{2}^{-1}(st) \lesssim t^{1/q}\Phi_{1}^{-1}(s).\Phi_{2}^{-1}(s)= t^{1/q}\Phi_{3}^{-1}(s). $$
Therefore, $\Phi_{3}$ is a growth function of upper type $q$.
\end{proof}

Let us recall a result of Volberg and Tolokonnikov on Orlicz spaces in \cite[Lemma 3]{voltiko}. Their result can be reformulated as follows:

\begin{proposition}\label{pro:main 5aqaq2pl}
Let  $\Phi_{1}, \Phi_{2}$ and $\Phi_{3}$  be growth functions of the lower type. $L^{\Phi_{3}}(\mathbb{T})=L^{\Phi_{1}}(\mathbb{T}).L^{\Phi_{2}}(\mathbb{T})$ if and only if  $\Phi_{3}^{-1} \sim \Phi_{1}^{-1}.\Phi_{2}^{-1}$,  where  $\Phi_{j}^{-1}$ is the inverse function of  $\Phi_{j}$, for  $j\in \{1,2,3\}$. 
\end{proposition}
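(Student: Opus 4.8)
The plan is to reduce the stated equivalence to a single clean statement about equality of Orlicz spaces, and then prove that statement in both directions. First I would introduce the auxiliary growth function $\widetilde{\Phi}_{3}$ defined by $\widetilde{\Phi}_{3}^{-1}=\Phi_{1}^{-1}\Phi_{2}^{-1}$; by Lemma \ref{pro:mainfqmpq5} this is indeed a growth function of lower type. Since the condition $\Phi_{3}^{-1}\sim\Phi_{1}^{-1}\Phi_{2}^{-1}$ is the same as $\Phi_{3}\sim\widetilde{\Phi}_{3}$, and equivalent growth functions determine the same Orlicz space with comparable Luxemburg norms, the whole proposition follows once I establish two facts: (a) one \emph{always} has $L^{\Phi_{1}}(\mathbb{T})\cdot L^{\Phi_{2}}(\mathbb{T})=L^{\widetilde{\Phi}_{3}}(\mathbb{T})$, and (b) $L^{\Phi_{3}}(\mathbb{T})=L^{\widetilde{\Phi}_{3}}(\mathbb{T})$ holds if and only if $\Phi_{3}\sim\widetilde{\Phi}_{3}$.

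For (a), the inclusion $L^{\Phi_{1}}\cdot L^{\Phi_{2}}\subseteq L^{\widetilde{\Phi}_{3}}$ is the generalized H\"older inequality for Orlicz spaces: the pointwise Young-type bound $\widetilde{\Phi}_{3}(ab)\le\Phi_{1}(a)+\Phi_{2}(b)$, obtained by setting $s_{1}=\Phi_{1}(a)$, $s_{2}=\Phi_{2}(b)$, assuming say $s_{1}\le s_{2}$, and using $ab\le\Phi_{1}^{-1}(s_{2})\Phi_{2}^{-1}(s_{2})=\widetilde{\Phi}_{3}^{-1}(s_{2})$, integrates to $\|f_{1}f_{2}\|_{L^{\widetilde{\Phi}_{3}}}^{lux}\lesssim\|f_{1}\|_{L^{\Phi_{1}}}^{lux}\|f_{2}\|_{L^{\Phi_{2}}}^{lux}$. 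For the reverse inclusion I would exhibit an explicit factorization: given $f\in L^{\widetilde{\Phi}_{3}}(\mathbb{T})$ with $\|f\|_{L^{\widetilde{\Phi}_{3}}}^{lux}\le1$, set $u=\widetilde{\Phi}_{3}(|f|)$ and define $f_{2}=\Phi_{2}^{-1}(u)$ together with $f_{1}=\frac{f}{|f|}\Phi_{1}^{-1}(u)$ on $\{f\neq0\}$ (and $f_{1}=0$ elsewhere). Since $\Phi_{1},\Phi_{2}$ are continuous increasing bijections, $f_{1}f_{2}=\frac{f}{|f|}\Phi_{1}^{-1}(u)\Phi_{2}^{-1}(u)=\frac{f}{|f|}\widetilde{\Phi}_{3}^{-1}(u)=f$, while $\int_{0}^{2\pi}\Phi_{1}(|f_{1}|)\frac{d\theta}{2\pi}=\int_{0}^{2\pi}u\,\frac{d\theta}{2\pi}=\int_{0}^{2\pi}\widetilde{\Phi}_{3}(|f|)\frac{d\theta}{2\pi}\le1$ and likewise for $f_{2}$; hence $\|f_{1}\|_{L^{\Phi_{1}}}^{lux}\le1$ and $\|f_{2}\|_{L^{\Phi_{2}}}^{lux}\le1$, and homogeneity gives the general case.

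For (b), the implication $\Phi_{3}\sim\widetilde{\Phi}_{3}\Rightarrow L^{\Phi_{3}}=L^{\widetilde{\Phi}_{3}}$ is exactly the equivalence principle recalled at the start of the Hardy-Orlicz subsection. The converse is the core of the argument. Here I would first note that $L^{\Phi_{3}}(\mathbb{T})$ and $L^{\widetilde{\Phi}_{3}}(\mathbb{T})$ are complete metrizable topological vector spaces, so if they coincide as sets the identity is a bijection between $F$-spaces and the open mapping theorem forces the two Luxemburg (quasi-)norms to be comparable, $\|h\|_{L^{\Phi_{3}}}^{lux}\approx\|h\|_{L^{\widetilde{\Phi}_{3}}}^{lux}$ for all $h$. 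I would then test this on characteristic functions: for a measurable $E\subset\mathbb{T}$ of normalized measure $s\in(0,1]$ a direct computation gives $\|\chi_{E}\|_{L^{\Phi}}^{lux}=1/\Phi^{-1}(1/s)$, so the norm comparison yields $\Phi_{3}^{-1}(t)\approx\widetilde{\Phi}_{3}^{-1}(t)=\Phi_{1}^{-1}(t)\Phi_{2}^{-1}(t)$ for every $t\ge1$. Since on a finite measure space an Orlicz space depends only on the behavior of $\Phi$ at infinity (equivalently of $\Phi^{-1}$ for large arguments), this large-$t$ comparison is precisely the content of $\Phi_{3}^{-1}\sim\Phi_{1}^{-1}\Phi_{2}^{-1}$, the range $0<t<1$ being immaterial for the spaces and controlled by the upper-type bounds on $\Phi_{1}^{-1},\Phi_{2}^{-1}$ coming from the lower types of $\Phi_{1},\Phi_{2}$.

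The main obstacle is the necessity half of (b): passing from the purely set-theoretic equality $L^{\Phi_{3}}=L^{\Phi_{1}}\cdot L^{\Phi_{2}}$ to a quantitative norm comparison. This is where completeness of the Orlicz spaces and the open mapping theorem are indispensable, since without them the equality of spaces carries no metric information, and where one must be careful that characteristic functions on the probability space $\mathbb{T}$ only probe $\Phi^{-1}$ on $[1,\infty)$, so that the reduction to large-argument behavior, valid precisely because the measure is finite, must be invoked to recover the full equivalence $\sim$.
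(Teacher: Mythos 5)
The first thing to note is that the paper does not prove this proposition at all: it is recalled as a known result of Volberg and Tolokonnikov (\cite{voltiko}, Lemma 3), so your proposal cannot be compared to an internal argument — it is a self-contained reproof of the cited lemma. On its merits, the sufficiency half of your argument is the standard one and is sound: the Young-type bound $\widetilde{\Phi}_{3}(ab)\leq\Phi_{1}(a)+\Phi_{2}(b)$ is correctly derived (using that lower-type growth functions may be taken continuous and increasing, as in Remark \ref{pro:main 5aqaqq2pl}), and your explicit factorization $f_{1}=\frac{f}{|f|}\Phi_{1}^{-1}\bigl(\widetilde{\Phi}_{3}(|f|)\bigr)$, $f_{2}=\Phi_{2}^{-1}\bigl(\widetilde{\Phi}_{3}(|f|)\bigr)$ is exactly the device the paper itself uses on boundary functions in its proof of Theorem \ref{pro:main2aqop}, where $g_{k}=\Phi_{k}^{-1}\circ\Phi_{3}(\cdots)$; there the outer-function machinery replaces your unimodular factor $f/|f|$. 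Your functional-analytic necessity step is also legitimate in itself: for a lower-type $\Phi$ the Luxemburg functional is a homogeneous quasi-norm (since $\Phi$ is increasing, $\|f+g\|\leq 2\max(\|f\|,\|g\|)$), $L^{\Phi}(\mathbb{T})$ is a complete metrizable TVS, the closed graph/open mapping theorems apply, and the computation $\|\chi_{E}\|_{L^{\Phi}}^{lux}=1/\Phi^{-1}(1/s)$ is correct.

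There is, however, one genuine defect, and you put your finger on it without resolving it: testing on characteristic functions of a probability space only probes $\Phi^{-1}$ on $[1,\infty)$, and your claim that the range $0<t<1$ is ``controlled by the upper-type bounds'' is false. With the paper's literal definition (\ref{eq:equivalent}) of $\sim$, which demands the two-sided inequality for \emph{all} $t>0$, the necessity direction cannot be recovered — indeed it fails as stated. Take $\Phi_{1}=\Phi_{2}$, let $\widetilde{\Phi}_{3}$ be given by $\widetilde{\Phi}_{3}^{-1}=(\Phi_{1}^{-1})^{2}$, and let $\Phi_{3}$ agree with $\widetilde{\Phi}_{3}$ on $[1,\infty)$ but equal a different power, say $\Phi_{3}(t)=\widetilde{\Phi}_{3}(1)\,t^{q}$, on $[0,1]$. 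This $\Phi_{3}$ is still a growth function of lower type, and since on a finite measure space only the behavior of $\Phi$ at infinity affects the space (your own splitting argument, using the lower type to absorb the small-argument region, shows $L^{\Phi_{3}}=L^{\widetilde{\Phi}_{3}}$ with comparable norms), one gets $L^{\Phi_{3}}(\mathbb{T})=L^{\Phi_{1}}(\mathbb{T})\cdot L^{\Phi_{2}}(\mathbb{T})$; yet if $\Phi_{3}^{-1}(t)\approx t^{a}$ and $\widetilde{\Phi}_{3}^{-1}(t)\approx t^{b}$ near $0$ with $b<a$, then $t^{b}\leq c\,\Phi_{3}^{-1}(ct)\approx c^{1+a}t^{a}$ fails as $t\to 0$ for every fixed $c$, so $\Phi_{3}^{-1}\not\sim\Phi_{1}^{-1}\Phi_{2}^{-1}$ in the all-$t$ sense. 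The upshot is that your argument proves exactly what is provable: $\Phi_{3}^{-1}(t)\approx\Phi_{1}^{-1}(t)\Phi_{2}^{-1}(t)$ for $t\geq 1$, i.e.\ equivalence at infinity, which is the correct reading of the Volberg--Tolokonnikov lemma on $\mathbb{T}$ and is all the paper ever uses (the sufficiency direction, Lemma \ref{pro:main 5aqp1aqqq2pl}, and the factorization in Theorem \ref{pro:main2aqop}). Replace your final sentence about upper types by this restriction of $\sim$ to large arguments and the proof is complete; as written, that one step would fail.
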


The following result is an immediate consequence of Proposition \ref{pro:main 5aqaq2pl}. Therefore, the proof will be omitted.

\begin{lemma}\label{pro:main 5aqp1aqqq2pl}
Let  $\Phi_{1}, \Phi_{2}$ and $\Phi_{3}$  be growth functions of the lower type such that  $\Phi_{3}^{-1} \sim \Phi_{1}^{-1}.\Phi_{2}^{-1}$,  where  $\Phi_{j}^{-1}$ is the inverse function of  $\Phi_{j}$, for  $j\in \{1,2,3\}$.  For all  $F\in   H^{\Phi_{1}}(\mathbb{D})$ and  $G\in   H^{\Phi_{2}}(\mathbb{D})$, the product   $FG \in H^{\Phi_{3}}(\mathbb{D})$ and  
\begin{equation}\label{eq:eaamqq1}
\|FG\|_{H^{\Phi_{3}}}^{lux} \leq C \|F\|_{H^{\Phi_{1}}}^{lux}\|G\|_{H^{\Phi_{2}}}^{lux}, 
\end{equation}
where $C$ is constant independent of $F$ and $G$.  
\end{lemma}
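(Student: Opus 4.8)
The plan is to reduce the product statement for Hardy--Orlicz spaces on the disk to the corresponding factorization result for Orlicz spaces on the circle, which is exactly Proposition~\ref{pro:main 5aqaq2pl} (the Volberg--Tolokonnikov theorem), and then to control the $H^{\Phi_3}$ norm of the product by the boundary values. First I would take $F\in H^{\Phi_1}(\mathbb{D})$ and $G\in H^{\Phi_2}(\mathbb{D})$, both assumed not identically zero (the degenerate case being trivial). By Theorem~\ref{pro:mainfaaqaqq5}, each has well-defined boundary values: there exist unique $f\in L^{\Phi_1}(\mathbb{T})$ and $g\in L^{\Phi_2}(\mathbb{T})$ with $f(e^{i\theta})=\lim_{r\to1}F(re^{i\theta})$ and $g(e^{i\theta})=\lim_{r\to1}G(re^{i\theta})$ almost everywhere, satisfying $\|F\|_{H^{\Phi_1}}^{lux}=\|f\|_{L^{\Phi_1}}^{lux}$ and $\|G\|_{H^{\Phi_2}}^{lux}=\|g\|_{L^{\Phi_2}}^{lux}$.

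Next I would form the product $FG$, which is certainly analytic on $\mathbb{D}$, and observe that its boundary function is the pointwise product $fg$, since $\lim_{r\to1}F(re^{i\theta})G(re^{i\theta})=f(e^{i\theta})g(e^{i\theta})$ almost everywhere. Now the hypothesis $\Phi_3^{-1}\sim\Phi_1^{-1}.\Phi_2^{-1}$ places us precisely in the setting of Proposition~\ref{pro:main 5aqaq2pl}, which gives $L^{\Phi_3}(\mathbb{T})=L^{\Phi_1}(\mathbb{T}).L^{\Phi_2}(\mathbb{T})$. In particular the product of an $L^{\Phi_1}$ function and an $L^{\Phi_2}$ function lies in $L^{\Phi_3}$, so $fg\in L^{\Phi_3}(\mathbb{T})$, together with the norm inequality
\begin{equation*}
\|fg\|_{L^{\Phi_3}}^{lux}\leq C\,\|f\|_{L^{\Phi_1}}^{lux}\,\|g\|_{L^{\Phi_2}}^{lux},
\end{equation*}
where $C$ depends only on the $\Phi_j$. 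This is the heart of the argument, and I expect extracting the \emph{quantitative} norm bound (rather than merely the set equality) from the Volberg--Tolokonnikov statement to be the main point requiring care, since Proposition~\ref{pro:main 5aqaq2pl} is phrased as an equality of spaces; one reads off the constant from the equivalence constant in $\Phi_3^{-1}\sim\Phi_1^{-1}.\Phi_2^{-1}$ via a standard closed-graph or homogeneity argument for the Luxemburg norms.

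Finally, to transfer this back to the disk I would invoke Lemma~\ref{pro:mainfaaqaaqqq5}: since $FG$ belongs to some $H^s(\mathbb{D})$ (because $H^{\Phi_1}\hookrightarrow H^{p_1}$ and $H^{\Phi_2}\hookrightarrow H^{p_2}$ by \eqref{eq:suqiaqsq8n}, and the product of an $H^{p_1}$ and an $H^{p_2}$ function lies in $H^{s}$ with $1/s=1/p_1+1/p_2$) and its boundary function $fg$ belongs to $L^{\Phi_3}(\mathbb{T})$, Lemma~\ref{pro:mainfaaqaaqqq5} yields $FG\in H^{\Phi_3}(\mathbb{D})$ with $\|FG\|_{H^{\Phi_3}}^{lux}=\|fg\|_{L^{\Phi_3}}^{lux}$. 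Combining this equality with the $L^{\Phi_3}$ norm bound above and the identities $\|f\|_{L^{\Phi_1}}^{lux}=\|F\|_{H^{\Phi_1}}^{lux}$, $\|g\|_{L^{\Phi_2}}^{lux}=\|G\|_{H^{\Phi_2}}^{lux}$ gives exactly \eqref{eq:eaamqq1}. The only subtlety worth double-checking is that Lemma~\ref{pro:mainfaaqaaqqq5} requires $\Phi_3$ to be a lower-type growth function, which is guaranteed by Lemma~\ref{pro:mainfqmpq5} applied to the representative $\Phi_3$ with $\Phi_3^{-1}=\Phi_1^{-1}.\Phi_2^{-1}$, this representative being equivalent to the given $\Phi_3$.
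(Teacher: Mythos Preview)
Your proposal is correct and follows exactly the route the paper indicates: the paper omits the proof entirely, stating only that the lemma ``is an immediate consequence of Proposition~\ref{pro:main 5aqaq2pl}'' (the Volberg--Tolokonnikov result on $L^{\Phi}(\mathbb{T})$). Your argument supplies precisely the details the paper suppresses---passing to boundary values via Theorem~\ref{pro:mainfaaqaqq5}, applying the Orlicz-space product on $\mathbb{T}$, and returning to the disk via Lemma~\ref{pro:mainfaaqaaqqq5} after checking $FG\in H^{s}(\mathbb{D})$---so there is nothing to add.
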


\proof[Proof of Theorem \ref{pro:main2aqop}.]
Without loss of generality, we can assume that  $\Phi_{3}^{-1}= \Phi_{1}^{-1}.\Phi_{2}^{-1}$. 

\medskip

Let  $0\not\equiv G\in H^{\Phi_{3}}(\mathbb{D})$.  According to Theorem \ref{pro:main0apaaal0}, there exists a unique function
    $g\in L^{\Phi_{3}}\left(\mathbb{T}\right)$ such that $\log|g| \in L^{1}\left(\mathbb{T}\right)$ and
 $ g(e^{i\theta})=\lim_{r\to 1}G(re^{i\theta}),$
 for almost all $\theta\in \mathbb{R}$ and there exists also
$\sigma$ a unique positive finite and singular Borel measure on $\mathbb{T}$ such that
$$   G(z) = B(z)S_{\sigma}(z)O_{|g|}(z), ~~\forall~z\in \mathbb{D}, 
    $$
where $B$ is the Blaschke product associated with the zero sequence of $G$,  $O_{|g|}$ is outer function associated with $|g|$ and $S_{\sigma}$ is the inner function  defined in (\ref{eq:suqi8n}). Moreover,   $\|G\|_{H^{\Phi_{3}}}^{lux}=\|O_{|g|}\|_{H^{\Phi_{3}}}^{lux} =\|g\|_{L^{\Phi_{3}}}^{lux}.$ 
For $k=\{1,2\}$, put   $$ g_{k}= \Phi_{k}^{-1}\circ\Phi_{3}\left(\|g\|_{L^{\Phi_{3}}}^{lux}\right)  \Phi_{k}^{-1}\circ\Phi_{3}\left(\frac{|g|}{\|g\|_{L^{\Phi_{3}}}^{lux}}\right).        $$

\medskip
In the rest of the proof, we can pose
$\|G\|_{H^{\Phi_{3}}}^{lux}=\|g\|_{L^{\Phi_{3}}}^{lux}=1$.

\medskip

By construction, $g_{k}\in  L^{\Phi_{k}}\left(\mathbb{T}\right)$ and
$$ \|g_{k}\|_{L^{\Phi_{k}}}^{lux} \leq \Phi_{k}^{-1}\circ\Phi_{3}\left(\|g\|_{L^{\Phi_{3}}}^{lux}\right).  $$
Indeed, 
$$  \frac{1}{2\pi}\int_{0}^{2\pi}\Phi_{k}\left( \frac{|g_{k}(e^{it})|}{\Phi_{k}^{-1}\circ\Phi_{3}(1)} \right)dt = \frac{1}{2\pi}\int_{0}^{2\pi}\Phi_{3}\left(|g(e^{it})|\right)
dt \leq 1.
   $$
Since  $\Phi_{3}^{-1}= \Phi_{1}^{-1}.\Phi_{2}^{-1}$, it follows that 
\begin{equation}\label{eq:inegaaqlitedehay}
\|g_{1}\|_{L^{\Phi_{1}}}^{lux}\|g_{2}\|_{L^{\Phi_{2}}}^{lux}  \leq \|g\|_{L^{\Phi_{3}}}^{lux}.
\end{equation}
We have also
 \begin{equation}\label{eq:inegaaqliaqtedehay}
|g| = g_{1}g_{2}.
\end{equation}
Indeed, 
\begin{align*}
g_{1}g_{2}&=\Phi_{1}^{-1}\circ\Phi_{3}(1)  \Phi_{1}^{-1}\circ\Phi_{3}\left(|g|\right) \Phi_{2}^{-1}\circ\Phi_{3}(1)\Phi_{2}^{-1}\circ\Phi_{3}\left(|g|\right) \\
&= \Phi_{3}^{-1}\circ\Phi_{3}(1) \Phi_{3}^{-1}\left(\Phi_{3}\left(|g|\right)\right)
=|g|. 
\end{align*}
Let us  assume that $\Phi_{1}$ and $\Phi_{2}$ are respectively of lower type $p_{1}$ and $p_{2}$. For  $k=\{1,2\}$, recall that the function $\Phi_{p_{k}}$ defined by  
 $$ \Phi_{p_{k}}(t) = \Phi_{k}(t^{1/p_{k}}), ~~\forall~ t \geq 0   $$ 
is continuous, increasing and convex (see Remark \ref{pro:main 5aqaqq2pl}). Moreover,
$$ \Phi_{p_{k}}^{-1}(t) = [\Phi_{k}^{-1}(t)]^{p_{k}}, ~~\forall~ t \geq 0,   $$
where $\Phi_{p_{k}}^{-1}$ is the inverse function of $\Phi_{p_{k}}$. Since  $g\in L^{\Phi_{3}}\left(\mathbb{T}\right)$, according to Jensen's inequality, we have
\begin{align*}
\Phi_{p_{k}}\left(\frac{p_{k}}{2\pi}\int_{0}^{2\pi}\log^{+}\left(|\Phi_{k}^{-1}\circ\Phi_{3}\left(|g(e^{it})|\right)dt \right) \right)
 &\leq \Phi_{p_{k}}\left( \frac{1}{2\pi}\int_{0}^{2\pi}\left|\Phi_{k}^{-1}\circ\Phi_{3}\left(|g(e^{it})|\right)\right|^{p_{k}}dt \right) \\
&= \Phi_{p_{k}}\left( \frac{1}{2\pi}\int_{0}^{2\pi}\Phi_{p_{k}}^{-1}(\Phi_{3}(|g(e^{it})|))dt \right)\\
&\leq  \frac{1}{2\pi}\int_{0}^{2\pi}\Phi_{p_{k}}\left(\Phi_{p_{k}}^{-1}(\Phi_{3}(|g(e^{it})|))\right)dt \leq 1,
\end{align*}
where $\log^{+}(s)=\max\{0, \log(s)\}$. It follows that
 \begin{equation}\label{eq:inegaaqedehay}
\frac{1}{2\pi}\int_{0}^{2\pi}\log^{+}(|g_{k}(e^{it})|)dt< +\infty,
\end{equation}
since $\log^{+}(st) \leq \log^{+}(s) + \log^{+}(t)$. Let us now show that 
$$  \frac{1}{2\pi}\int_{0}^{2\pi}\log^{-}(|g_{k}(e^{it})|)dt< +\infty,
  $$
where $\log^{-}(s)=\max\{0, -\log(s)\}$. Since $|g| = g_{1}g_{2}$, for almost all $t \in \mathbb{R}$, we have 
\begin{align*}
\log^{-}(|g_{1}(e^{it})|)&= \log^{+}\left(\frac{1}{|g_{1}(e^{it})|}\right)= \log^{+}\left(|g_{2}(e^{it})|\times\frac{1}{|g(e^{it})|}\right) \\
&\leq  \log^{+}(|g_{2}(e^{it})|)+ \log^{+}\left(\frac{1}{|g(e^{it})|}\right) \\ &=\log^{+}(|g_{2}(e^{it})|)+\log^{-}(|g(e^{it})|) \\
 &\leq\log^{+}(|g_{2}(e^{it})|)+|\log(|g(e^{it})|)|,  
\end{align*}
since $|\log(s)|=\log^{-}(s) +\log^{+}(s)$. We deduce that
$$  \log^{-}(|g_{1}(e^{it})|) \leq  \log^{+}(|g_{2}(e^{it})|)+|\log(|g(e^{it})|)|.  $$
Likewise, we also show that 
$$  \log^{-}(|g_{2}(e^{it})|) \leq  \log^{+}(|g_{1}(e^{it})|)+|\log(|g(e^{it})|)|.  $$
Since Relation (\ref{eq:inegaaqedehay}) is satisfied and  $\log|g| \in L^{1}\left(\mathbb{T}\right)$, we deduce that
\begin{equation}\label{eq:inegaehay}
\frac{1}{2\pi}\int_{0}^{2\pi}\log^{-}(|g_{k}(e^{it})|)dt< +\infty.
\end{equation}
It follows that
$$  \frac{1}{2\pi}\int_{0}^{2\pi}|\log(|g_{k}(e^{it})|)|dt = \frac{1}{2\pi}\int_{0}^{2\pi}\log^{-}(|g_{k}(e^{it})|)dt+\frac{1}{2\pi}\int_{0}^{2\pi}\log^{+}(|g_{k}(e^{it})|)dt < +\infty,
   $$
thanks to Relations (\ref{eq:inegaaqedehay}) and (\ref{eq:inegaehay}). Therefore, the outer function $O_{g_{k}}$ associated with $g_{k}$ belongs to $H^{\Phi_{k}}(\mathbb{D})$ and $\|O_{g_{k}}\|_{H^{\Phi_{k}}}^{lux}=\|g_{k}\|_{L^{\Phi_{k}}}^{lux}$,  according to Proposition \ref{pro:mainfpmqaaq5}.   For $z \in \mathbb{D}$, put  
$$  G_{1}(z) =O_{g_{1}}(z)   \hspace*{0.5cm}\textrm{and} \hspace*{0.5cm} G_{2}(z) =B(z)S_{\sigma}(z)O_{g_{2}}(z).  $$
By construction, $G_{1}$ and $G_{2}$ are analytic functions on $\mathbb{D}$ such that  $G_{1} \in H^{\Phi_{1}}(\mathbb{D})$ and $G_{2} \in H^{\Phi_{2}}(\mathbb{D})$. Moreover, $\|G_{1}\|_{H^{\Phi_{1}}}^{lux}=\|g_{1}\|_{L^{\Phi_{1}}}^{lux}$ and $\|G_{2}\|_{H^{\Phi_{2}}}^{lux}=\|g_{2}\|_{L^{\Phi_{2}}}^{lux}$, since  $B S_{\sigma}$  is inner function on $\mathbb{D}$. It follows that, for all $z\in \mathbb{D}$, 
$$ G(z) =B(z)S_{\sigma}(z)O_{g_{1}}(z)O_{g_{2}}(z)=   G_{1}(z) G_{2}(z)$$ 
and
$$  \|G_{1}\|_{H^{\Phi_{1}}}^{lux}\|G_{2}\|_{H^{\Phi_{2}}}^{lux}=\|g_{1}\|_{L^{\Phi_{1}}}^{lux}\|g_{2}\|_{L^{\Phi_{2}}}^{lux} \leq \|g\|_{L^{\Phi_{3}}}^{lux} = \|G\|_{H^{\Phi_{3}}}^{lux} \lesssim \|G_{1}\|_{H^{\Phi_{1}}}^{lux}\|G_{2}\|_{H^{\Phi_{2}}}^{lux},
  $$
thanks to Relation (\ref{eq:inegaaqlitedehay}) and Lemma \ref{pro:main 5aqp1aqqq2pl}.
\epf

\subsection{Applications to Hankel Operators.} 

\subsubsection{Proofs of  Theorem \ref{pro:mainfqmaqppaaqqmaqq5} and Theorem \ref{pro:madgaaqqmaqq5}.} 

\begin{lemma}\label{pro:mainfqmaqpq5}
Let  $\Phi_{1}$ be a growth function of lower type $p_{1}$ and  $\Phi_{2} \in \Delta_{2} \cap \nabla_{2}$  a convex  growth function. Let $\Phi_{3}$ a  positive function   on $[0, \infty)$ such that
\begin{equation}\label{eq:deltAQa2}
\Phi_{3}^{-1}(t) =\Phi_{1}^{-1}(t)\Psi_{2}^{-1}(t),~ ~\forall~ t >  0,\end{equation}
where   $\Psi_{2}$  the complementary function of $\Phi_{2}$. The following assertions are satisfied:
\begin{itemize}
\item[(i)] If  $ b_{\Phi_{2}} < p_{1}$ then
 $\Phi_{3}$  is  a convex  growth function belongs to $\Delta_{2} \cap \nabla_{2}$.
\item[(ii)] If  $\Phi_{1}$ is also of upper type $q_{1}$ and  $0<p_{1} \leq q_{1} \leq  a_{\Phi_{2}}$ then   $\Phi_{3}$ is  a growth function of both lower type $p_{3}$ and upper type $q_{3}$ such that  $0<p_{3} \leq q_{3} \leq 1$.
\end{itemize} 
\end{lemma}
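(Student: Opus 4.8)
The plan is to read off the lower and upper types of $\Phi_3$ from those of its inverse $\Phi_3^{-1}=\Phi_1^{-1}\Psi_2^{-1}$, using the product-of-types rule together with the duality recalled just before the lemma, namely that a growth function $\Phi$ is of lower type $p$ if and only if $\Phi^{-1}$ is of upper type $1/p$. First I would record the types of the two factors. Since $\Phi_2\in\Delta_2\cap\nabla_2$, property (iii) gives $1<a_{\Phi_2}\le b_{\Phi_2}<\infty$, and property (ii) applied to $\Phi_2$ and to its complementary function $\Psi_2$ gives $1<a_{\Psi_2}\le b_{\Psi_2}<\infty$ together with $\frac{1}{a_{\Psi_2}}+\frac{1}{b_{\Phi_2}}=1$ and $\frac{1}{b_{\Psi_2}}+\frac{1}{a_{\Phi_2}}=1$. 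By Proposition \ref{pro:mainfaaqaaqldq5}, $\Psi_2$ is of lower type $a_{\Psi_2}$ and upper type $b_{\Psi_2}$, hence $\Psi_2^{-1}$ is of upper type $1-1/b_{\Phi_2}$ and of lower type $1-1/a_{\Phi_2}$, while $\Phi_1^{-1}$ is of upper type $1/p_1$. Note that $\Phi_3^{-1}$, being a product of two increasing bijections of $[0,\infty)$, is again such a bijection, so $\Phi_3$ is a well-defined growth function.

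For part (i) I would multiply the defining inequalities. For $t\ge1$ the product gives that $\Phi_3^{-1}$ is of upper type $1/p_1+1-1/b_{\Phi_2}$; for $t\le1$, using that the nondecreasing function $\Phi_1^{-1}$ is trivially of lower type $0$, the product inherits the positive lower type $1-1/a_{\Phi_2}$ of $\Psi_2^{-1}$. By the inversion duality, $\Phi_3$ is of lower type $p_3=(1/p_1+1-1/b_{\Phi_2})^{-1}$ and upper type $q_3=(1-1/a_{\Phi_2})^{-1}$. The hypothesis $b_{\Phi_2}<p_1$ forces $1/p_1+1-1/b_{\Phi_2}<1$, hence $p_3>1$, while $a_{\Phi_2}>1$ gives $q_3<\infty$; comparing the exponents of $\Phi_3^{-1}$ yields $p_3\le q_3$, so $1<p_3\le q_3<\infty$. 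A lower type $p_3>1$ gives $\frac{\Phi_3(t_1)}{t_1}\le C\frac{\Phi_3(t_2)}{t_2}$ for $0<t_1<t_2$, which is the convexity criterion \eqref{eq:suia8n}; thus $\Phi_3$ is equivalent to a convex growth function, and Proposition \ref{pro:maiaqaq1qaqks8} then gives $\Phi_3\in\Delta_2\cap\nabla_2$.

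For part (ii), $\Phi_1$ now also has upper type $q_1$, so $\Phi_1^{-1}$ additionally acquires lower type $1/q_1$. Repeating the product computation, $\Phi_3^{-1}$ is of upper type $1/p_1+1-1/b_{\Phi_2}$ and of lower type $1/q_1+1-1/a_{\Phi_2}$, whence $\Phi_3$ is of lower type $p_3=(1/p_1+1-1/b_{\Phi_2})^{-1}$ and upper type $q_3=(1/q_1+1-1/a_{\Phi_2})^{-1}$. Here $q_3\le1$ is equivalent to $q_1\le a_{\Phi_2}$, which is assumed, while $p_3\le q_3$ reduces to $1/q_1-1/p_1\le 1/a_{\Phi_2}-1/b_{\Phi_2}$, true because the left-hand side is $\le0$ (as $p_1\le q_1$) and the right-hand side is $\ge0$ (as $a_{\Phi_2}\le b_{\Phi_2}$); both exponents are finite and positive, so $0<p_3\le q_3\le1$.

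The step I expect to be the main obstacle is obtaining a positive lower type for $\Phi_3^{-1}$ in part (i): there $\Phi_1$ carries no upper type, so $\Phi_1^{-1}$ has no positive lower type of its own. The resolution is the observation that any nondecreasing function is of lower type $0$, so the positive lower type of $\Psi_2^{-1}$, which comes precisely from $\Phi_2\in\nabla_2$, survives in the product and supplies the finite upper type of $\Phi_3$. Everything else is bookkeeping with the product-of-types rule and the inversion duality.
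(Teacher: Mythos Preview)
Your argument is correct and follows the same approach as the paper: the paper packages the product-of-types computation into Lemma~\ref{pro:mainfqmpq5} and then reads off $p_3=(1/p_1+1/a_{\Psi_2})^{-1}$, $q_3=b_{\Psi_2}$ in part (i) and $q_3=(1/q_1+1/b_{\Psi_2})^{-1}$ in part (ii), which are exactly your exponents rewritten via $1/a_{\Psi_2}=1-1/b_{\Phi_2}$ and $1/b_{\Psi_2}=1-1/a_{\Phi_2}$. Your handling of the ``main obstacle'' (that in (i) only $\Psi_2^{-1}$ supplies a positive lower type for the product) is precisely the content of the second clause of Lemma~\ref{pro:mainfqmpq5}, and your extra step through criterion~\eqref{eq:suia8n} before invoking Proposition~\ref{pro:maiaqaq1qaqks8} makes the convexity claim for $\Phi_3$ slightly more explicit than the paper does.
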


\begin{proof} 
Since  $\Phi_{2} \in \Delta_{2} \cap \nabla_{2}$, we deduce that  $\Psi_{2} \in \Delta_{2} \cap \nabla_{2}$. Moreover,  $ \frac{1}{a_{\Psi_{2}}}+\frac{1}{b_{\Phi_{2}}} =1$ and
$\Phi_{2}^{-1}(t)\Psi_{2}^{-1}(t) \sim t$, for all  $t>0$.

\medskip

$i)$  According to Lemma \ref{pro:mainfqmpq5}, the function  $\Phi_{3}$ is a growth function of lower type $p_{3}:= \left( \frac{1}{p_{1}} + \frac{1}{a_{\Psi_{2}}}\right)^{-1}$ and upper type $q_{3}:=b_{\Psi_{2}}$. Since  $b_{\Phi_{2}}<p_{1}$, we deduce that
$$ \frac{1}{p_{3}} =\frac{1}{p_{1}} + \frac{1}{a_{\Psi_{2}}} <  \frac{1}{b_{\Phi_{2}}} + \frac{1}{a_{\Psi_{2}}}= 1.   $$
It follows that $\Phi_{3}$ is a growth function of both lower type $p_{3}$ and upper type  $q_{3}$ such that  $1< p_{3} \leq q_{3}< \infty$. It follows that,  $\Phi_{3}$ belongs to $\Delta_{2} \cap \nabla_{2}$, thanks to Proposition \ref{pro:maiaqaq1qaqks8}.

\medskip

$ii)$ The function  $\Phi_{3}$ is a growth function of lower type $p_{3}:= \left( \frac{1}{p_{1}} + \frac{1}{a_{\Psi_{2}}}\right)^{-1}$ and upper type  $q_{3}:= \left( \frac{1}{q_{1}} + \frac{1}{b_{\Psi_{2}}}\right)^{-1}$, thanks to Lemma \ref{pro:mainfqmpq5}.  Since $q_{1} \leq a_{\Phi_{2}}$, we have
$$\frac{1}{q_{3}} =\frac{1}{q_{1}} + \frac{1}{b_{\Psi_{2}}} \geq \frac{1}{a_{\Phi_{2}}} + \frac{1}{b_{\Psi_{2}}}  =1.  $$
We deduce that   $0<p_{3}\leq q_{3} \leq 1$. 
\end{proof}

From the duality result in \cite{raoren}, we obtain the following result.

\begin{theorem}\label{pro:main2aaopapmp}
Let  $\Phi \in \Delta_{2} \cap \nabla_{2}$ be a convex growth function. The topological dual of $H^{\Phi}(\mathbb{D})$,   $\left(H^{\Phi}(\mathbb{D})\right)^{*}$    is isomorphic to $H^{\Psi}(\mathbb{D})$, in the sense that, for all  $T\in \left(H^{\Phi}(\mathbb{D})\right)^{*}$, there is a unique $G\in H^{\Psi}(\mathbb{D})$ such that
 $$   T(F)=\langle F,G  \rangle := \lim_{r \to 1} \frac{1}{2\pi}\int_{-\pi}^{\pi}F(re^{i\theta})\overline{G(re^{i\theta})}d\theta, ~~ \forall~F\in H^{\Phi}(\mathbb{D}). 
   $$
Moreover, 
$$ \|G\|_{H^{\Psi}}^{lux} \approx \sup\{ |\langle F,G  \rangle|:    F\in H^{\Phi}(\mathbb{D})\hspace*{0.15cm} \text{with}\hspace*{0.15cm} \|F\|_{H^{\Phi}}^{lux} \leq 1   \}.   $$
\end{theorem}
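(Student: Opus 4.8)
The plan is to transfer the known Orlicz-space duality $(L^{\Phi}(\mathbb{T}))^{*}\cong L^{\Psi}(\mathbb{T})$ from \cite{raoren} to the holomorphic subspaces, by combining the Hahn--Banach theorem with the boundedness of the Szeg\"o projection. First I would record the two structural facts that make the argument run. Since $\Phi\in\Delta_{2}\cap\nabla_{2}$ is convex, its complementary function $\Psi$ again lies in $\Delta_{2}\cap\nabla_{2}$ (as already noted in the proof of Proposition \ref{pro:maiaqaq1qaqks8}), so Rao and Ren's duality applies to $L^{\Phi}(\mathbb{T})$ with representing space $L^{\Psi}(\mathbb{T})$, and the Szeg\"o projection $\mathcal{P}$ maps $L^{\Psi}(\mathbb{T})$ boundedly onto $H^{\Psi}(\mathbb{D})$ by Proposition \ref{pro:main11pqapaqlas6}. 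I would also identify $H^{\Phi}(\mathbb{D})$ with the closed subspace $H^{\Phi}(\mathbb{T})\subset L^{\Phi}(\mathbb{T})$ through boundary values (Lemma \ref{pro:main11pqapaqmpmm6}), so that a functional on $H^{\Phi}(\mathbb{D})$ becomes a functional on $H^{\Phi}(\mathbb{T})$.

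Given $T\in(H^{\Phi}(\mathbb{D}))^{*}$, the key steps are: (1) use Hahn--Banach to extend $T$ to $\widetilde{T}\in(L^{\Phi}(\mathbb{T}))^{*}$ with $\|\widetilde{T}\|=\|T\|$; (2) invoke the Orlicz duality to produce $h\in L^{\Psi}(\mathbb{T})$ with $\widetilde{T}(f)=\frac{1}{2\pi}\int_{-\pi}^{\pi}f\,\overline{h}\,d\theta$ and $\|h\|_{L^{\Psi}}^{lux}\approx\|T\|$; and (3) set $G:=\mathcal{P}(h)\in H^{\Psi}(\mathbb{D})$. The crucial observation is that the strictly negative Fourier frequencies of $h$ do not contribute to the pairing against $H^{\Phi}$: testing $\langle z^{n},h\rangle$ against the monomials $z^{n}\in H^{\infty}\subset H^{\Phi}$ shows that the annihilator of $H^{\Phi}(\mathbb{T})$ in $L^{\Psi}(\mathbb{T})$ is exactly $\{h:\widehat{h}(n)=0\ \forall\,n\geq0\}$, so $h$ and $\mathcal{P}(h)$ induce the same functional on $H^{\Phi}$, whence $T(F)=\langle F,G\rangle$. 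To match the displayed limit definition of the pairing I would use that $F_{r}\to f$ in $L^{\Phi}$ and $G_{r}\to g$ in $L^{\Psi}$ (Proposition \ref{pro:mainfaaaaqaqlaq5}), together with the Orlicz H\"older inequality, itself a consequence of the inequality $t<\Phi^{-1}(t)\Psi^{-1}(t)\leq 2t$ recorded earlier.

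Uniqueness follows by testing $\langle z^{n},G_{1}-G_{2}\rangle=0$ against all monomials $z^{n}$, $n\geq0$, which forces $\widehat{G_{1}}(n)=\widehat{G_{2}}(n)$ for every $n\geq0$ and hence $G_{1}=G_{2}$. For the norm equivalence I would argue in two directions. The Orlicz H\"older inequality gives $\sup\{|\langle F,G\rangle|:\|F\|_{H^{\Phi}}^{lux}\leq1\}\lesssim\|G\|_{L^{\Psi}}^{lux}=\|G\|_{H^{\Psi}}^{lux}$, using that the $H^{\Psi}(\mathbb{D})$ norm equals the boundary $L^{\Psi}(\mathbb{T})$ norm. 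Conversely, the boundedness of $\mathcal{P}$ yields $\|G\|_{L^{\Psi}}^{lux}=\|\mathcal{P}(h)\|_{L^{\Psi}}^{lux}\lesssim\|h\|_{L^{\Psi}}^{lux}\approx\|T\|=\sup\{|\langle F,G\rangle|:\|F\|_{H^{\Phi}}^{lux}\leq1\}$; combining the two bounds gives $\|G\|_{H^{\Psi}}^{lux}\approx\|T\|$, which is precisely the asserted equivalence.

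The main obstacle is not any single hard estimate but the bookkeeping around the Szeg\"o projection: one must verify that replacing the Hahn--Banach representative $h$ by its holomorphic projection $\mathcal{P}(h)$ neither changes the action of the functional on $H^{\Phi}$ nor inflates the norm, and that the quotient norm on $H^{\Psi}$ arising from the abstract identification $(H^{\Phi})^{*}\cong L^{\Psi}/(H^{\Phi})^{\perp}$ is comparable to the genuine $H^{\Psi}$ norm. Both points reduce to the boundedness of $\mathcal{P}$ on $L^{\Psi}(\mathbb{T})$, so no new difficulty arises beyond carefully assembling the already-established results.
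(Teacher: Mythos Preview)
Your proposal is correct and is precisely the standard route one takes to derive the Hardy--Orlicz duality from the $L^{\Phi}$--$L^{\Psi}$ duality in \cite{raoren}; the paper itself does not supply a proof but simply states the theorem as a consequence of that reference, so you have filled in exactly the details the authors leave implicit (Hahn--Banach extension, Orlicz duality, Szeg\"o projection via Proposition~\ref{pro:main11pqapaqlas6}, and boundary identification via Lemma~\ref{pro:main11pqapaqmpmm6} and Proposition~\ref{pro:mainfaaaaqaqlaq5}).
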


As pointed out in \cite{BoGre, BoSehb}, from Viviani's results \cite{Viviani}, $BMOA(\varrho)$ spaces appear as duals of particular Hardy-Orlicz spaces.

\begin{theorem}\label{pro:main2aaop}
Let $\Phi$ be a growth function of lower type  $0< p \leq 1$ and upper type $1$ respectively. The topological dual of $H^{\Phi}(\mathbb{D})$,   $\left(H^{\Phi}(\mathbb{D})\right)^{*}$    is isomorphic to $BMOA(\varrho)$, where 
$\varrho(t):=\frac{1}{t\Phi^{-1}(1/t)}$,  in the sense that, for all  $T\in \left(H^{\Phi}(\mathbb{D})\right)^{*}$, there is a unique $G\in BMOA(\varrho)$ such that
 $$   \langle F,G  \rangle := \lim_{r \to 1} \frac{1}{2\pi}\int_{-\pi}^{\pi}F(re^{i\theta})\overline{G(re^{i\theta})}d\theta, ~~ \forall~F\in H^{\Phi}(\mathbb{D}). 
   $$
Moreover, 
$$ \|G\|_{BMOA(\varrho)} \approx \sup\{ |\langle F,G  \rangle|:    F\in H^{\Phi}(\mathbb{D})\hspace*{0.15cm} \text{with}\hspace*{0.15cm} \|F\|_{H^{\Phi}}^{lux} \leq 1   \}.   $$
\end{theorem}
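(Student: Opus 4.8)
The plan is to derive this duality by transferring Viviani's real-variable duality \cite{Viviani} to the holomorphic setting through boundary values and the analytic projection, the weight $\varrho(t)=\frac{1}{t\Phi^{-1}(1/t)}$ being exactly the one for which the dual of the real Hardy--Orlicz space $H^{\Phi}_{\mathrm{re}}(\mathbb{T})$ (defined by the maximal function) is $BMO(\varrho)$. The first step is to identify $H^{\Phi}(\mathbb{D})$, via boundary values, with a subspace of $H^{\Phi}_{\mathrm{re}}(\mathbb{T})$ on which the two (quasi-)norms are comparable: this is legitimate precisely because $\Phi$ is of both lower type $p\leq 1$ and upper type $1$, so that Lemma \ref{pro:mainfaaaqqaq5} applies and gives $\|G^{*}_{\alpha}\|_{L^{\Phi}}^{lux}\approx\|G\|_{H^{\Phi}}^{lux}$, the nontangential maximal function $G^{*}_{\alpha}$ dominating the maximal function of the boundary datum. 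I would also record, from Proposition \ref{pro:mainfaaaaqaqlaq5}, that $G_{r}\to g$ in $L^{\Phi}(\mathbb{T})$ and that the polynomials are dense in $H^{\Phi}(\mathbb{D})$, since each dilate $z\mapsto G(rz)$ extends holomorphically across $\mathbb{T}$ and is a uniform, hence $L^{\Phi}$, limit of its Taylor polynomials.

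For the sufficiency direction I would fix $G\in BMOA(\varrho)$ with boundary value $\gamma$ and show that $F\mapsto\langle F,G\rangle$ is a bounded functional. If $f$ denotes the boundary value of $F\in H^{\Phi}(\mathbb{D})$ (Theorem \ref{pro:mainfaaqaqq5}), then the $L^{\Phi}$-convergence of Proposition \ref{pro:mainfaaaaqaqlaq5} together with $\gamma\in BMO(\varrho)\subset L^{2}(\mathbb{T})$ lets me pass to the limit and write $\langle F,G\rangle=\frac{1}{2\pi}\int_{-\pi}^{\pi}f\,\overline{\gamma}$. Viviani's pairing inequality $\bigl|\int_{\mathbb{T}}f\,\overline{\gamma}\bigr|\lesssim\|f\|_{H^{\Phi}_{\mathrm{re}}}\,\|\gamma\|_{BMO(\varrho)}$, combined with the norm comparison of the first step, then yields $|\langle F,G\rangle|\lesssim\|G\|_{BMOA(\varrho)}\,\|F\|_{H^{\Phi}}^{lux}$, which is one half of the asserted norm equivalence; in particular $\langle\,\cdot\,,G\rangle$ is a continuous functional.

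For the converse, since $H^{\Phi}(\mathbb{D})$ need not be locally convex when $p<1$, I would avoid Hahn--Banach and instead reconstruct $G$ from the values of $T$ on the monomials $z^{n}\in H^{\infty}(\mathbb{D})\subset H^{\Phi}(\mathbb{D})$: setting $c_{n}:=T(z^{n})$, the $c_{n}$ are bounded by a multiple of $\|T\|$, so $G(z):=\sum_{n\geq0}\overline{c_{n}}z^{n}$ is holomorphic on $\mathbb{D}$, and the relation $\langle F,G\rangle=T(F)$ holds for polynomials $F$ by inspection, hence for all $F\in H^{\Phi}(\mathbb{D})$ by the continuity of $\langle\,\cdot\,,G\rangle$ and density. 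Uniqueness is immediate, for if $\langle F,G\rangle=0$ for every $F$ then testing against $F(z)=z^{n}$ forces $\widehat{G}(n)=0$ for all $n\geq0$, whence $G\equiv0$. The substantive point, and the main obstacle, is the bound $\|G\|_{BMOA(\varrho)}\lesssim\|T\|$: I would obtain it by testing $T$ against suitably normalized $H^{\Phi}$-atoms adapted to the arcs $I\subset\mathbb{T}$ and showing that the resulting estimates reconstruct the mean oscillation defining $\|G\|_{BMOA(\varrho)}$ --- this is the holomorphic counterpart of Viviani's atomic argument, and it amounts equivalently to the boundedness of the analytic (Riesz) projection from $BMO(\varrho)$ onto $BMOA(\varrho)$, which for $\varrho\equiv1$ reduces to the classical preservation of $BMO$ under conjugation. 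Putting the two inequalities together gives the stated equivalence $\|G\|_{BMOA(\varrho)}\approx\|T\|$.
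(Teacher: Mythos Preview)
The paper does not give a proof of this theorem. It is stated as a known result, introduced only by the sentence ``As pointed out in \cite{BoGre, BoSehb}, from Viviani's results \cite{Viviani}, $BMOA(\varrho)$ spaces appear as duals of particular Hardy--Orlicz spaces,'' and then used as a black box in the proof of Theorem~\ref{pro:madgaaqqmaqq5}. So there is no argument in the paper to compare against; your sketch is in fact the standard route that underlies the cited references.

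That said, one point in your write-up is logically out of order. In the converse direction you define $G$ from the coefficients $c_{n}=T(z^{n})$ and then assert that $\langle F,G\rangle=T(F)$ extends from polynomials to all of $H^{\Phi}(\mathbb{D})$ ``by the continuity of $\langle\,\cdot\,,G\rangle$ and density''; but that continuity is exactly the inequality $|\langle F,G\rangle|\lesssim\|G\|_{BMOA(\varrho)}\|F\|_{H^{\Phi}}^{lux}$ from your sufficiency step, which you cannot invoke until you know $G\in BMOA(\varrho)$. The bound $\|G\|_{BMOA(\varrho)}\lesssim\|T\|$ must therefore be established \emph{before} the extension, not after. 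Your atom-testing idea is the right one, but note that real-variable $(\varrho,\infty)$-atoms are not boundary values of holomorphic functions; in \cite{BoGre} this is handled by working with an atomic decomposition of the holomorphic space $H^{\Phi}(\mathbb{D})$ itself (or, equivalently, by checking that the analytic projection is bounded on $BMO(\varrho)$ under the present hypotheses on $\Phi$), and that is the step you would need to fill in.
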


\proof[Proof of Theorem \ref{pro:mainfqmaqppaaqqmaqq5}.]
Since $\Phi_{2}$ is convex (as composed of two convex functions), we deduce that $\Phi_{2}$ belongs to $\Delta_{2} \cap \nabla_{2}$ and and $p_{2} \leq a_{\Phi_{2}} \leq b_{\Phi_{2}}\leq q_{2}$, thanks to Proposition \ref{pro:maiaqaq1qaqks8}

\medskip

Let $\Phi_{4}$ a  positive function   on $[0, \infty)$ such that
$$  \Phi_{4}^{-1}(t) =\Phi_{1}^{-1}(t)\Psi_{2}^{-1}(t),~ ~\forall~ t >  0,  $$
where   $\Psi_{2}$  the complementary function of $\Phi_{2}$. Since  $ b_{\Phi_{2}} < p_{1}$, we deduce that  $\Phi_{4}$ is a convex growth function belongs to $\Delta_{2} \cap \nabla_{2}$, according  to Lemma \ref{pro:mainfqmaqpq5}.  For $ t> 0$,  we have
 $$  \Phi_{4}^{-1}(t).\Phi_{3}^{-1}(t)=\Phi_{1}^{-1}(t).\Psi_{2}^{-1}(t) \times\frac{\Phi_{2}^{-1}(t)}{\Phi_{1}^{-1}(t)} = \Psi_{2}^{-1}(t).\Phi_{2}^{-1}(t) \sim t.
   $$
It follows that $\Phi_{3}$ is   the complementary function of $\Phi_{4}$. Therefore,  $H^{\Phi_{4}}(\mathbb{D})$ is the topological dual of $H^{\Phi_{3}}(\mathbb{D})$, thanks to Theorem \ref{pro:main2aaopapmp}.

\medskip

Suppose that $b$ belongs to $H^{\Phi_{3}}(\mathbb{D})$ and show that $h_{b}$ is bounded from $H^{\Phi_{1}}(\mathbb{D})$ to $H^{\Phi_{2}}(\mathbb{D})$.

Let $F\in H^{\Phi_{1}}(\mathbb{D})$ and  $G\in H^{\Psi_{2}}(\mathbb{D})$ such that $\|G\|_{H^{\Psi_{2}}}^{lux}\leq 1$.  We have
\begin{align*}
 |\langle h_{b}(F), G   \rangle|&= |\langle \mathcal{P}(b\overline{F}), \mathcal{P}(G)   \rangle| =  |\langle b, FG   \rangle| \\
 &\lesssim \|b\|_{H^{\Phi_{3}}}^{lux}\|FG\|_{H^{\Phi_{4}}}^{lux}\\
 &\lesssim \|b\|_{H^{\Phi_{3}}}^{lux}\|F\|_{H^{\Phi_{1}}}^{lux}\|G\|_{H^{\Psi_{2}}}^{lux} \\
 &\lesssim \|b\|_{H^{\Phi_{3}}}^{lux}\|F\|_{H^{\Phi_{1}}}^{lux}.
   \end{align*}
We deduce that  $h_{b}(F) \in H^{\Phi_{2}}(\mathbb{D})$ and 
\begin{equation}\label{eq:ineitedehay}
\|h_{b}(F)\|_{H^{\Phi_{2}}}^{lux}\lesssim \|b\|_{H^{\Phi_{3}}}^{lux}\|F\|_{H^{\Phi_{1}}}^{lux}.
\end{equation}

\medskip

Conversely, suppose that the Hankel operator $h_{b}: H^{\Phi_{1}}(\mathbb{D})\longrightarrow H^{\Phi_{2}}(\mathbb{D})$ is bounded  and prove that $b$ belongs to  $H^{\Phi_{3}}(\mathbb{D})$. 

Let  $F\in H^{\Phi_{4}}(\mathbb{D})$ such that $\|F\|_{H^{\Phi_{4}}}^{lux}\leq 1$. According to Theorem \ref{pro:main2aqop}, there exist  $F_{1}\in H^{\Phi_{1}}(\mathbb{D})$ and $F_{2}\in H^{\Psi_{2}}(\mathbb{D})$ such that
   $F=F_{1}F_{2}$ and  $\|F\|_{H^{\Phi_{4}}}^{lux}\approx \|F_{1}\|_{H^{\Phi_{1}}}^{lux}\|F_{2}\|_{H^{\Psi_{2}}}^{lux}$. We have 
\begin{align*}
  |\langle b, F   \rangle| &= |\langle b, F_{1} F_{2}  \rangle| 
  = |\langle h_{b}(F_{1}),  F_{2}  \rangle| \\
  &\lesssim \|h_{b}(F_{1})\|_{H^{\Phi_{2}}}^{lux} \|F_{2}\|_{H^{\Psi_{2}}}^{lux} \lesssim \|h_{b}\| \|F_{1}\|_{H^{\Phi_{1}}}^{lux} \|F_{2}\|_{H^{\Psi_{2}}}^{lux} \\
  &\lesssim \|h_{b}\|\|F\|_{H^{\Phi_{4}}}^{lux}\lesssim \|h_{b}\|.
   \end{align*}
We deduce that   $b \in H^{\Phi_{3}}(\mathbb{D})$ and 
 \begin{equation}\label{eq:ineitaqedehay}
  \|b\|_{H^{\Phi_{3}}}^{lux}\lesssim \|h_{b}\|.
 \end{equation}
It ends the proof.   
\epf

\proof[Proof of Theorem \ref{pro:madgaaqqmaqq5}.]
Let $\Phi_{3}$ a  positive function   on $[0, \infty)$ such that
$$  \Phi_{3}^{-1}(t) =\Phi_{1}^{-1}(t)\Psi_{2}^{-1}(t),~ ~\forall~ t >  0,  $$
where   $\Psi_{2}$  the complementary function of $\Phi_{2}$. Since $0<p_{1} \leq q_{1} \leq  a_{\Phi_{2}}$, we deduce that   $\Phi_{3}$ is  a growth function of both lower type $p_{3}$ and upper type $q_{3}$ such that  $0<p_{3} \leq q_{3} \leq 1$, thanks to Lemma \ref{pro:mainfqmaqpq5}. For $ t> 0$, we have
$$\varrho(t):=\frac{\varrho_{1}(t)}{\varrho_{2}(t)}= \frac{\Phi_{2}^{-1}(1/t)}{\Phi_{1}^{-1}(1/t)} = \frac{\Phi_{2}^{-1}(1/t)\Psi_{2}^{-1}(1/t)}{\Phi_{1}^{-1}(1/t)\Psi_{2}^{-1}(1/t)} \sim \frac{1/t}{\Phi_{1}^{-1}(1/t)\Psi_{2}^{-1}(1/t)}= \frac{1}{t\Phi_{3}^{-1}(1/t)}.  $$
It follows that,   $H^{\Psi_{2}}(\mathbb{D})$ and  $BMOA(\varrho)$ are the respective dual spaces of spaces  $H^{\Phi_{2}}(\mathbb{D})$ and  $H^{\Phi_{3}}(\mathbb{D})$, according to Theorem \ref{pro:main2aaopapmp} and Theorem \ref{pro:main2aaop}. 

\medskip

The rest of the Proof is identical to that of the proof of Theorem \ref{pro:mainfqmaqppaaqqmaqq5}. Consequently, it will be omitted.
\epf

\bibliographystyle{plain}
 
\end{document}